%
%
%
\documentclass[12pt,a4paper,reqno]{article}

\usepackage{amsmath,amssymb,amsthm,amscd,dsfont,psfrag,color}

\usepackage[english]{babel}
\usepackage[mathscr]{eucal}
\usepackage[dvips]{graphicx}

\font\sc=rsfs10 at 12pt

\input{comment.sty}
\includecomment{task}
%
\includecomment{MM}
%

\includecomment{JK}
%
%
 
\setcounter{secnumdepth}{2}

\setcounter{tocdepth}{2}
 
 
\numberwithin{equation}{section}
 
 


 
\renewcommand{\a}{\alpha}
\renewcommand{\b}{\beta}
\newcommand{\g}{\gamma}
\newcommand{\G}{\Gamma}
\renewcommand{\d}{\delta}
\newcommand{\D}{\Delta}

\newcommand{\ve}{\varepsilon}
\newcommand{\z}{\zeta}
\newcommand{\y}{\eta}
\renewcommand{\th}{\theta}

\renewcommand{\l}{\lambda}
\renewcommand{\L}{\Lambda}
\newcommand{\m}{\mu}

\newcommand{\x}{\xi}

\renewcommand{\r}{\rho}
\newcommand{\s}{\sigma}

\newcommand{\f}{\phi}

\newcommand{\F}{\Phi}
\newcommand{\h}{\chi}
\newcommand{\p}{\psi}

\renewcommand{\O}{\Omega}

 
 
\newcommand{\C}{{\mathbb C}}
\newcommand{\R}{{\mathbb R}}
\newcommand{\Z}{{\mathbb Z}}
\newcommand{\N}{{\mathbb N}}

 
 
\newcommand{\mbf}[1]{\protect\mbox{\boldmath$#1$\unboldmath}}
\newcommand{\lmbf}[1]{\protect\mbox{{\scriptsize\boldmath$#1$\unboldmath}}}

 



\newcommand{\Hb}{{\mathbf H}}

\newcommand{\Lb}{{\mathbf L}}


\newcommand{\tF}{\mathfrak t}
\newcommand{\TF}{\mathfrak T}

 
\newcommand{\Ac}{{\mathcal A}}
\newcommand{\Bc}{{\mathcal B}}

\newcommand{\Hc}{{\mathcal H}}

\newcommand{\Oc}{{\mathcal O}}

\newcommand{\Rc}{{\mathcal R}}



\newcommand{\Ssf}{{\mathsf S}}
\newcommand{\Tsf}{{\mathsf T}}



%


 
\newcommand{\pd}{\partial} 

\newcommand{\pdf}[2]{\frac{\partial {#1}}{\partial {#2}}}


\newcommand{\CH}{\mathcal{H}}

 
\DeclareMathOperator{\im}{{\rm Im}\,}
\DeclareMathOperator{\re}{{\rm Re}\,}
\DeclareMathOperator{\rank}{rank}

\newcommand{\coun}{\operatorname{Count\,}}

\newcommand{\dom}{\operatorname{Dom\,}}

\newcommand{\supp}{\operatorname{supp\,}}

\newcommand{\Res}{\operatorname{Res\,}}
\newcommand{\dist}{\operatorname{dist\,}}

\newcommand{\opw}[1]{{\rm op}^{W}[{#1}]}

 

\newcommand{\spec}{\operatorname{spec\,}}

\newcommand{\sme}{\operatorname{spec}_{\operatorname{ess}}}
\newcommand{\smd}{\operatorname{spec}_{\operatorname{d}}} 
 


 
 
\newcommand{\bop}[2]{\mathcal{B}({#1},{#2})}
\newcommand{\bopp}[1]{\mathcal{B}(#1)}
\newcommand{\boc}[2]{\mathcal{B}_{\infty}({#1},{#2})}
\newcommand{\bocc}[1]{\mathcal{B}_{\infty}(#1)}
 
\newcommand{\ham}[1]{\mathbb{#1}} 
 
 
\newcommand{\cs}{C^{\infty}} 
\newcommand{\ccs}{C_{0}^{\infty}} 
 
 
 

 
 
\newcommand{\tS}{\sc\mbox{S}\hspace{1.0pt}} 

 
\newcommand{\dtemp}{\tS^{\prime}} 

 
 


\newtheorem{theorem}{Theorem}[section]
\newtheorem{proposition}[theorem]{Proposition}
\newtheorem{lemma}[theorem]{Lemma}
\newtheorem{corollary}[theorem]{Corollary}

\theoremstyle{definition}
\newtheorem{definition}[theorem]{Definition}
\newtheorem{assumption}[theorem]{Assumption}

\theoremstyle{remark}
\newtheorem{remark}[theorem]{Remark}
\newtheorem{example}[theorem]{Example}


\begin{document}

\title{Complex absorbing potential method \\ 
        for the perturbed Dirac operator \footnote{First version 12 february 2012; second version 23 May 2012 (ArXiv version)}}

\author{J. Kungsman \\ 
             Department of Mathematics \\
             Uppsala University \\
             SE-751 06 Uppsala, Sweden
             \and 
             M. Melgaard \thanks{The second author acknowledges support by Science Foundation Ireland} \\
             Department of Mathematics \\ 
             School of Mathematical and Physical Sciences \\
             University of Sussex      \\
             Brighton BN1 9QH        \\ 
             Great Britain}                                               
\date{October 7, 2013}

 
\maketitle 
 
\begin{abstract}
The Complex Absorbing Potential (CAP) method is widely used to compute resonances in Quantum Chemistry, both for 
nonrelativistic and relativistic Hamiltonians. In the semiclassical limit $\hbar \to 0$ we consider resonances 
near the real axis and we establish the CAP method rigorously for the perturbed Dirac operator by proving that
individual resonances are perturbed eigenvalues of the nonselfadjoint CAP Hamiltonian, and vice versa. The proofs are 
based on pseudodifferential operator theory and microlocal analysis. 
\end{abstract}

\newpage

\tableofcontents

\newpage

\section{Introduction}
\label{jkmm2:intro}
One of the most successful methods for computing resonances in Quantum Chemistry is the Complex
Absorbing Potential (CAP) method, partly because it yields good approximations to the true resonances and, 
partly, because it is easy to implement numerically (see, e.g., Muga \textit{et al.} \cite{muga04}).

Within the semiclassical limit, i.e., as Planck's ``constant" $\hbar$ tends to zero, we study the CAP method 
rigorously when the governing Hamiltonian is a semiclassical Dirac operator
\begin{equation*}
  \ham{D} = -i c\hbar \sum _{j=1}^3 \a _j \pd_{x_{j}}  + \b mc^2 + \ham{V}(x), 
\label{jkmm2:intro-eq1}
\end{equation*}
acting on $\Lb^2(\R ^3 ; \C ^4) = \bigoplus _{j=1}^4 \Lb^2(\R ^3) =: (\Lb^2(\R ^3))^4$. Here the $\{ \a _j \}_{j=1}^3$ and $\b = \a _4$ are $4\times 4$ 
Dirac matrices obeying the anti-commutation relations 
\begin{equation*}
  \a _j \a _k + \a _k \a _j = 2 \d _{jk}\mbf{I}_{4}, \quad 1\le j,k \le 4,
\label{jkmm2:intro-eq2}
\end{equation*}
where $\mbf{I}_{n}$ is the $n\times n$ identity matrix. The potential $\ham{V}$ is assumed to have compact support. 

We define resonances through the method of complex distortion which has been widely applied in the context of 
Schr\"odinger operators but which subsequently was carried over to Dirac resonances in \cite{seba88}. Thus the 
resonances $z(\hbar )=E(\hbar ) \pm \G (\hbar ) /2$ appear as eigenvalues of a non-selfadjoint operator $\ham{D}_{\th}$ 
associated with $\ham{D}$. In applications one is interested in computing the resonance energy $E$ and the width $\G $, 
which is the inverse of the life-time of the corresponding resonant state. One way to do so is the CAP method, i.e., to 
augment the Hamiltonian by an imaginary potential and consider eigenvalues of the perturbed Hamiltonian as good 
approximations of the true resonances. In this paper we justify this method in the semiclassical approximation for 
resonances with $\G (\hbar ) = \Oc (\hbar ^N)$, $N\gg 1$, and show that such resonances give rise to eigenvalues 
of the CAP Hamiltonian $\ham{J}:=\ham{D}-i \ham{W}$ within distance at most $\hbar ^{-5}\log (\hbar ^{-1}) \G (\hbar ) + \Oc (\hbar ^\infty )$. 
Also the converse implication is proved. Both of these results hold under the assumption that the CAP is zero in the interaction
region, i.e. the support of the potential $\ham{V}$, and ``switched on" outside this region. In numerical implementations, 
however, the ``switch-on" point is moved inward towards the interaction region as much as possible to minimize the number 
of grid points used. If the classical Hamiltonian vector fields generated by the eigenvalues of the 
principal symbol of $\ham{D}$ are nontrapping (see Definition~\ref{jkmm2:hflow-def}), one can allow the supports to intersect 
which at worst increases the error by a factor $\hbar ^{-1}$. This requires the use of an Egorov type 
theorem for matrix valued Hamiltonians, which enables one to express the time evolution of quantum observables (self-adjoint
operators) in the semiclassical limit in terms of a classical dynamics of principal (matrix) symbols. The mentioned results deal 
with single resonances/eigenvalues and give no information regarding multiplicities; clusters of resonances will be treated in 
a future work.

Despite its success in Physics and Chemistry, only few rigorous justifications of the method exist. For (nonrelativistic, scalar valued) 
Schr\"{o}dinger operators with compactly supported electric potentials, Stefanov \cite{stefanov05} was the first to establish results 
similar to the above-mentioned ones. In the ``non-intersecting" case, he starts from a resonance and then, by considering a cutoff
resonant state (see Section~\ref{jkmm2:prfind1}), he constructs a quasimode (see Section~\ref{jkmm2:quasi}) which generates a 
perturbed resonance. In the ``intersecting" case, the previous scheme of proof only applies after a refined microlocal analysis, 
involving a propagation-of-singularities argument. Recently Kungsman and Melgaard carried over Stefanov's results to matrix valued 
Schr\"{o}dinger operators \cite{jkmm10}. The matrix valued setting is more complicated, in particular, in the ``intersecting" case, where 
one has to begin by solving Heisenberg's equations of motion semiclassically. Then, by applying a localization result away from the 
semiclassical wavefront set, it is possible to investigate how singularities propagate in this situation. The Egorov type statement, which 
is part of the proof by Kungsman and Melgaard \cite{jkmm10} differs from the scalar case because one also needs to propagate the 
matrix degrees of freedom. To push through this scheme of proof for matrix valued Schr\"{o}dinger operators, it was necessary to impose 
an additional technical (and restrictive) assumption in \cite{jkmm10}. An interesting feature of the present work, for the perturbed 
Dirac operator (also a matrix structure), is that one can avoid such technicalities, thus obtaining more natural and better results, and 
the afore-mentioned scheme of proof (using cutoff resonant states, Egorov type result, propagation of singularity argument and quasimodes), 
developed in \cite{jkmm10}, can be carried through, using a ``full" version of the matrix valued Egorov type theorem, see 
Lemma~\ref{jkmm2:prfind3-lem1}. We interpret this as yet another evidence of the fact that Dirac's description of the electron is a 
better physical model. 

Other rigorous results on resonances for Dirac operators are found in \cite{parisse91,parisse92,balslev92,amour01,khochman07}.

\newpage

\section{Preliminaries}
\label{jkmm2:prelim}
\noindent
\textbf{Notation}. Throughout the paper we denote by $C$ (with or without indices) various positive constants whose 
precise value is of no importance and their values may change from line to line; the ``constants'' usually depend on various parameters but not on $\hbar$. 
For $x_0\in \R ^3$ and $R>0$ the notation 
\begin{equation*}
    B(x_0,R) = \{x\,:\, |x-x_0|<R \}
\label{jkmm2:notation-eq3}
\end{equation*}
means an open ball centered at $x_0$ having radius $R$. For $x \in \R^{3}$ we denote $\langle x \rangle := (1 + |x|^{2})^{1/2}$. 
For a complex number $\z \in \C \setminus [-\infty ,0)$, we denote by $\z^{\frac{1}{2}}$ its branch of the square root with 
positive real part. The set $D(\z,r)=\{ z \in \C : |z-\z| <r , \: \z \in \C, \: r >0 \}$ defines an open disk in $\C$ with center in $\z$ and radius $r$. 
Complex rectangles $\{z\in \C \,:\, l\le \re z \le r,\,b\le \im z \le t \}$ are written 
\begin{align}
[l,r] + i[b,t].
\label{jkmm2:notation-eq5}
\end{align}
We shall denote by $\mathrm{M}_{4}(\C)$ the set of all $4 \times 4$ matrices over $\C$, equipped with the operator norm denoted by 
$\| \cdot \|_{4 \times 4}$. We let $\Hc := \Lb^{2}(\R^{3}, \C^{4})$ be the space of (equivalence classes of) 
$\C^{4}$-valued functions $\mbf{u} = (u_1, u_2, u_3, u_4)^t$ on $\R^{3}$ endowed with the inner product 
$$\langle \mbf{u}, \mbf{v} \rangle = \sum _{j=1}^4 \int _{\R ^3} u_i\overline{v_i} \, dx$$
such that $\langle \mbf{u}, \mbf{u} \rangle =: \|\mbf{u} \|^2$ is finite. 
The space $C_{0}^{\infty}(\R^{3})$ consists of all infinitely differentiable functions on $\R^{3}$ with compact support. We let $D_{x_{j}}=-i \pd/ \pd x_{j}$ 
and $D^{\g}=D_{x_{1}}^{\g_{1}} D_{x_{2}}^{\g_{1}} D_{x_{3}}^{\g_{3}}$ with standard multi-index notation $\g =(\g_{1}, \g_{2}, \g_{3}) \in \N_{0}^{3}$. 
The semiclassical Sobolev space of order one is denoted by $\Hb^{1}(\R^{3},\C^{4})$ and is equipped with the norm
$$
\|\mbf{u}\|_{\Hb ^1}^2 = \sum _{j=1}^4 \int _{\R ^3} (|\hbar \nabla u_j|^2 + |u_j|^2)\, dx.
$$
Moreover, the Schwartz space of rapidly decreasing functions and its dual space of 
tempered distributions are denoted by $\tS(\R^{3},\C^{4})$ and $\dtemp(\R^{3},\C^{4})$, respectively. 
For $\h _1,\h _2 \in \cs_{0} (\R^{n},[0,1])$ we use $\h _1 \prec \h _2$ to indicate that $\h _2 =1$ in a neighborhood of $\supp \h _1$ (i.e., the support of $\h _1$). We always assume cut-off functions take their values in $[0,1]$.   
\newline

\noindent
\textbf{Operators}. If $\mbf{A}$ is an operator on $\Lb^{2}(\R^{3}, \C ^4)$ its domain is denoted $\dom(\mbf{A})$. The spectrum of $\mbf{A}$ is the disjoint union of the discrete and essential spectra of $\mbf{A}$ and is designated by $\spec(\mbf{A}) = \smd(\mbf{A})\cup \sme(\mbf{A})$. Moreover, 
its resolvent set is denoted by $\rho(\mbf{A})$ and its resolvent is $\mbf{R}(\zeta)=(\mbf{A}-\zeta)^{-1}$. The spaces of bounded and compact operators between Hilbert 
spaces $\CH_{1}$ and $\CH_{2}$ are denoted by $\bop{\CH_{1}}{\CH_{2}}$ and $\boc{\CH_{1}}{\CH_{2}}$, respectively. If $\CH:=\CH_{1}=\CH_{2}$ 
we use the notation $\bopp{\CH}$ and $\bocc{\CH}$, respectively. The commutator of two operators $\mbf{A}$ and $\mbf{B}$, when defined, is denoted $[\mbf{A},\mbf{B}]=\mbf{AB} - \mbf{BA}$.  
The number of eigenvalues or resonances (counting multiplicities) of $\mbf{A}$ on a set $\O \subset \C$ will be denoted $\coun (\mbf{A},\O)$. 
Scalar-valued, respectively matrix-valued, operators are denoted by capitals, respectively boldface capitals, e.g. $\mbf{\h } = \h \mbf{I}_4$. If $\mbf{A} \in \Bc_{\infty}(\Hc)$ and if for some orthonormal 
basis $\{\mbf{f}_j\}$ of $\Hc $ the sum 
\begin{align}
\sum _{j} \langle (\mbf{A}^\ast \mbf{A})^{1/2}\mbf{f}_j, \mbf{f}_j \rangle
\label{jkmm2:ope-eq1} 
\end{align}
is finite, then this property is independent of the choice of orthonormal basis and we say that $\mbf{A}$ is of trace class, in symbols $\mbf{A} \in \Bc_{1}(\Hc)$, and the trace norm 
$\| \mbf{A} \| _{\Bc_{1}}$ is given by (\ref{jkmm2:ope-eq1}). Equivalently $\mbf{A}\in \Bc _1$ if and only if the sequence $\m _1 (\mbf{A}) \ge \m _2 (\mbf{A})\ge \cdots $ of eigenvalues of $(\mbf{A}^\ast \mbf{A})^{1/2}$, called singular values of $\mbf{A}$, is summable. The singular values satisfy Ky Fan's inequalities
\begin{align}
\m _{i+j-1}(\mbf{A} + \mbf{B}) &\le \m _i (\mbf{A}) + \m _j (\mbf{B}) \label{singular-values-compact-sum}\\
\m _{i+j-1}(\mbf{A}\mbf{B}) &\le \m _i (\mbf{A})\m _j (\mbf{B})  \label{singular-values-compact-product}
\end{align}
for $i,j\ge 0$ and $\mbf{A},\mbf{B} \in \Bc _\infty $ and also 
\begin{eqnarray}
\m _j (\mbf{A}\mbf{B}) &\le \|\mbf{B} \| \m _j (\mbf{A}) \label{singular-values-compact-bounded-1}\\
\m _j (\mbf{B}\mbf{A}) &\le \|\mbf{B} \| \m _j (\mbf{A}) \label{singular-values-compact-bounded-2}
\end{eqnarray}
whenever $\mbf{A}\in  \Bc _\infty $ and $\mbf{B}\in \Bc $. 
When $\mbf{A}$ is of trace class it is possible to extend the relation 
\begin{equation*}
\det (\mbf{1} - \mbf{A}) = \prod _{j} (1 - \l _j)
\label{jkmm2:ope-eq2} 
\end{equation*}
for $\mbf{A}$ of finite rank, where $\l _j$ are the eigenvalues of $\mbf{A}$, repeated according to multiplicity, so that $\det (\mbf{1}-\mbf{A}) \ne 0$ if and only if $\mbf{1}-\mbf{A}$ is invertible and 
\begin{equation}
    \det (\mbf{1} - \mbf{A}) \le e^{\|\lmbf{A} \| _{\Bc_{1}}} = e^{\sum \m _j (\lmbf{A})}. 
\label{jkmm2:ope-eq3} 
\end{equation}      
holds for any $\mbf{A}$ of trace class; see, e.g., \cite{sjo02} for details. 
\newline

\noindent
\textbf{Pseudodifferential operators}. For the (trivial) cotangent bundle of $\R ^3$ we write $\Tsf^\ast \R ^3$ and it is sometimes convenient to 
think of it as the product of space and frequency, i.e. $\Tsf^\ast \R ^3 = \R _x ^3 \times \R _\x ^3$. Let $m: \Tsf^{\ast }\R ^3 \to \R _+$ be a so called order function, i.e. a smooth function such that there are $C, N>0$ so that 
\begin{equation*}
m(x,\x )\le C \big (1 + (x-y)^2 + (\x - \y)^2 \big )^{N/2} m(y,\y)
\label{jkmm2:pseudo-eq1}
\end{equation*} 
for all $(x,\x), (y,\y) \in \Tsf^{\ast }\R ^3$. Then we define $\Ssf(m) \subset \cs{(\Tsf^{\ast }\R ^3)}\otimes \mathrm{M}_4(\C )$ to consist of all  $\mbf{a} \in \cs{(\Tsf^{\ast }\R ^3)}\otimes \mathrm{M}_{4}(\C )$ such that for all multi-indices $\a , \b \in \N _0 ^3$ there are constants $C_{\a ,\b} > 0$ with 
\begin{equation*}
\| \partial _\x ^{\a } \partial _x ^{\b } \mbf{a}(x,\x )\|_{4 \times 4} \le C_{\a ,\b}m(x,\x) \quad \text{for all } (x,\x)\in \Tsf^{\ast } \R ^3.  
\label{jkmm2:pseudo-eq2}
\end{equation*}
For $\mbf{a} \in \Ssf(m)$ we can define a corresponding Weyl quantization $\mbf{A}=\opw{\mbf{a}}$ on $\Lb^2(\R^{3}, \C^{4})$ by 
\begin{equation*}
(\mbf{A} \mbf{u})(x) = \frac{1}{(2\pi \hbar )^{3}} \iint \limits _{\Tsf^\ast \R ^3} e^{i (x-y)\cdot \x /\hbar } \mbf{a} \Big (\frac{x+y}{2}, \x\Big ) \mbf{u}(y) \,dy \, d\x. 
\label{jkmm2:pseudo-eq3}
\end{equation*}
For symbols that are bounded with all their derivatives we have the celebrated result by Calderon-Vaillancourt \cite[Theorem 7.11]{dimsjo99}.

\begin{proposition} 
Let $\mbf{a}\in \Ssf(1)$. Then $\opw{\mbf{a}}$ defines a continuous operator on $\Lb^{2}(\R^{3},\C^{4})$. 
\label{jkmm2:pseudo-cvprop}
\end{proposition}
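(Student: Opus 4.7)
The plan is to reduce the matrix statement to the scalar Calderon--Vaillancourt theorem, and then sketch the standard almost-orthogonality proof of the latter. First, unpack the definition of $\Ssf(1)$: a symbol $\mbf{a} \in \Ssf(1)$ is a $4 \times 4$ matrix of scalar smooth functions $a_{ij}$ on $\Tsf^\ast \R^3$, each of which is bounded with all its derivatives, uniformly in $\hbar$. For $\mbf{u} = (u_1, u_2, u_3, u_4)^t$ the Weyl quantization acts componentwise,
\begin{equation*}
(\opw{\mbf{a}} \mbf{u})_i(x) = \sum _{j=1}^{4} (\opw{a_{ij}} u_j)(x),
\end{equation*}
so by Cauchy--Schwarz
\begin{equation*}
\| \opw{\mbf{a}} \mbf{u} \|^{2} \le 4 \sum _{i,j=1}^{4} \| \opw{a_{ij}} u_j \|_{\Lb^{2}(\R^{3})}^{2} \le C \sum _{j=1}^{4} \| u_j \|_{\Lb^{2}(\R^{3})}^{2} = C \|\mbf{u} \|^{2},
\end{equation*}
provided each scalar Weyl operator $\opw{a_{ij}} : \Lb^{2}(\R^{3}) \to \Lb^{2}(\R^{3})$ is bounded uniformly in $\hbar$ with norm controlled by finitely many seminorms of $a_{ij}$. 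Thus the problem reduces to the scalar Calderon--Vaillancourt theorem.

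For the scalar version I would proceed via the Cotlar--Stein almost orthogonality lemma. Choose $\h \in \ccs(\Tsf^\ast \R^3)$ supported in the unit cube with $\sum _{\m \in \Z^{6}} \h ( \cdot - \m ) \equiv 1$, and decompose $a = \sum _{\m} a_\m $ with $a_\m (x,\x) = a(x,\x) \h ((x,\x)/\sqrt{\hbar } - \m )$. Since $a \in S(1)$, each piece $a_\m $ is bounded with all derivatives (uniformly in $\m$ and $\hbar$, after accounting for the $\hbar ^{-1/2}$ scaling via the seminorms of $a$), so $\| \opw{a_\m} \|_{\Lb^{2} \to \Lb^{2}} \le C$ by a direct Schur-kernel estimate for compactly supported symbols. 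The key point is to establish the almost-orthogonality
\begin{equation*}
\| \opw{a_\m}^{\ast} \opw{a_\n} \|_{\Lb^{2} \to \Lb^{2}} + \| \opw{a_\m} \opw{a_\n}^{\ast} \|_{\Lb^{2} \to \Lb^{2}} \le C_N \langle \m - \n \rangle ^{-N}, \quad N \gg 1,
\end{equation*}
for all $\m , \n$. This decay follows from the Weyl composition formula: $\overline{a_\m} \# a_\n$ has integral kernel involving an oscillatory integral whose phase is nonstationary when the supports of $a_\m$ and $a_\n$ are well separated, and repeated integration by parts in this phase yields the $\langle \m - \n \rangle^{-N}$ factor, with constants depending only on finitely many $S(1)$ seminorms of $a$. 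Cotlar--Stein then gives $\| \opw{a} \| \le C \sup _\m \sum _\n \langle \m - \n \rangle ^{-N/2} < \infty $.

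The main obstacle is the off-diagonal estimate via non-stationary phase, since one has to track how the seminorms of $a$ enter through the composition formula and verify uniformity in $\hbar $; everything else (the componentwise reduction to scalars and the summation via Cotlar--Stein) is routine. Since this is a classical result, for a short proof in the paper I would simply invoke \cite[Theorem 7.11]{dimsjo99} entrywise and cite the componentwise bound above.
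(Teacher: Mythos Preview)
The paper does not prove this proposition at all; it is stated with a reference to \cite[Theorem~7.11]{dimsjo99} and used as a black box. Your final suggestion---to invoke the scalar Calderon--Vaillancourt theorem entrywise via the componentwise bound---is exactly what the paper does implicitly, so in that sense your proposal matches. The Cotlar--Stein sketch you give beforehand is correct in outline and goes well beyond what the paper provides, though it is unnecessary for the paper's purposes.
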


We recall that if $m_1,\,m_2$ are order functions and $\mbf{a} \in \Ssf(m_1)$, $\mbf{b} \in \Ssf(m_2)$ then $m_1m_2$ is an order function and there exists $\mbf{a} \# \mbf{b} \in \Ssf(m_1m_2)$ so that 
\begin{align} 
\opw{\mbf{a}}\opw{\mbf{b}} = \opw{\mbf{a} \# \mbf{b}}.
\label{jkmm2:pseudo-eq4}
\end{align}
If for $\mbf{a} \in \Ssf(m)$ there are $\mbf{a}_j\in \Ssf(m)$ so that for any $N\in \N$ and $\a ,\b \in \N _0 ^3$ there exists $C_{N,\a }>0$ such that 
\begin{equation*}
\| \partial _\x ^\a \partial _x ^\b (\mbf{a} - \sum _{j=0}^{N-1} \hbar ^j \mbf{a}_j) \| \le C_{N ,\a} \hbar ^N m
\label{jkmm2:pesudo-eq5}
\end{equation*}
then we write $\mbf{a} \sim \sum _{j\ge 0}\hbar ^j \mbf{a}_j$ and we call $\mbf{a}_0$ and $\mbf{a}_1$ the principal, and subprincipal symbol of 
$\opw{\mbf{a}}$, respectively. The principal symbol of $\mbf{a} \# \mbf{b} $ in (\ref{jkmm2:pseudo-eq4}) is given by the product of the principal symbols of $\mbf{a}$ and $\mbf{b}$.

If $\mbf{a} \sim \sum \hbar ^j \mbf{a}_j$ and $\mbf{b} \sim \sum \hbar ^j \mbf{b}_j$ the symbol $\mbf{a} \# \mbf{b}$ has the asymptotic expansion   
\begin{align}
\mbf{a} \# \mbf{b} (x, \x) = \sum _{j, k, l \in \N _0} \frac{\hbar ^{j + k + l}}{j!} \Big ( \frac{i}{2}(\partial _x \partial _\eta - \partial _\x \partial _y) \Big ) ^j \mbf{a}_k (x, \x )\mbf{b}_l (y, \eta ) \Big | _{\substack{y=x \\ \eta = \x }}.
\label{product_expansion}
\end{align}
An operator $\mbf{A}= \opw{\mbf{a}}$ is called elliptic at $(x_0, \x _0)$ if 
$\mbf{a}^{-1}(x_0, \x _0)$ exists and belongs to $\Ssf(m^{-1})$. We say that \mbf{A} is elliptic if it is elliptic at every point. In the affirmative case, one can construct a parametrix $\mbf{q} \in \Ssf(m^{-1})$ which 
is an asymptotic inverse of $\mbf{a}$ in the sense of symbol products: 

\begin{lemma}
Suppose $\mbf{a} \in \Ssf(m)$ is elliptic in the sense that $\mbf{a}^{-1}(x,\xi)$ exists for all $(x,\xi) \in \Tsf^{\ast} \R^{d}$ and 
belongs to the class $\Ssf(m^{-1})$. Then there exists a parametrix $\mbf{q} \in \Ssf(m^{-1})$ with an asymptotic expansion 
of the form 
\begin{equation}
\mbf{q} \sim \mbf{a}^{-1} + \hbar (\mbf{a}^{-1} \# \mbf{r}) + \hbar^{2} ( \mbf{a}^{-1} \# \mbf{r} \# \mbf{r}) + \cdots
\label{jkmm2:pseudo-leminv-eq1}
\end{equation}
such that $\mbf{r} \in \Ssf(1)$ and
\begin{equation*}
\mbf{a} \# \mbf{q} \sim \mbf{q} \# \mbf{a} \sim \mbf{I}_{4}.
\end{equation*}
\label{jkmm2:pseudo-leminv}
\end{lemma}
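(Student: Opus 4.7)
The plan is to construct $\mbf{q}$ as a Neumann-type series built on $\mbf{a}^{-1}$ within the Weyl calculus. First, I apply the product expansion (\ref{product_expansion}) to $\mbf{a}\#\mbf{a}^{-1}$. The zero-order term is the pointwise product $\mbf{a}\cdot\mbf{a}^{-1} = \mbf{I}_4$. Every higher-order term involves at least one pair of mixed derivatives across the two factors, and all such terms lie in $\Ssf(m)\cdot \Ssf(m^{-1}) = \Ssf(1)$ because differentiation preserves these symbol classes. A standard Borel-type resummation in the semiclassical symbol calculus (see, e.g., \cite{dimsjo99}) then yields a genuine symbol $\mbf{r} \in \Ssf(1)$ such that
\begin{equation*}
\mbf{a} \# \mbf{a}^{-1} \;\sim\; \mbf{I}_4 - \hbar\, \mbf{r}.
\end{equation*}

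Second, since $\#$ preserves $\Ssf(1)$, each iterated power $\mbf{r}^{\#j}$ lies in $\Ssf(1)$, and a second Borel resummation produces $\mbf{n}\in \Ssf(1)$ with $\mbf{n}\sim \sum_{j\ge 0} \hbar^j\, \mbf{r}^{\#j}$. I then set
\begin{equation*}
\mbf{q} \;:=\; \mbf{a}^{-1}\#\mbf{n}\;\in\;\Ssf(m^{-1}),
\end{equation*}
whose expansion, obtained by sharp-multiplying termwise on the left by $\mbf{a}^{-1}$, matches exactly the one asserted in (\ref{jkmm2:pseudo-leminv-eq1}). To verify $\mbf{a}\#\mbf{q} \sim \mbf{I}_4$, I would exploit the elementary telescoping identity
\begin{equation*}
(\mbf{I}_4 - \hbar\,\mbf{r})\, \#\, \sum_{j=0}^{N-1} \hbar^j\, \mbf{r}^{\#j} \;=\; \mbf{I}_4 - \hbar^N\, \mbf{r}^{\#N},
\end{equation*}
valid because $\mbf{I}_4$ commutes with every $\mbf{r}^{\#j}$ under $\#$. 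Substituting $\mbf{a}\#\mbf{a}^{-1}$ for $\mbf{I}_4 - \hbar \mbf{r}$ modulo $\hbar^\infty\, \Ssf(1)$ and using $\mbf{r}^{\#N}\in \Ssf(1)$, one obtains $\mbf{a}\#\mbf{q} = \mbf{I}_4 + \Oc(\hbar^N)$ in $\Ssf(1)$ for every $N$, which is precisely the asymptotic identity required.

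For the right-sided statement $\mbf{q}\#\mbf{a} \sim \mbf{I}_4$, the quickest route is to run the same construction with the factors swapped to obtain a left parametrix $\tilde{\mbf{q}}\in \Ssf(m^{-1})$ with $\tilde{\mbf{q}}\#\mbf{a} \sim \mbf{I}_4$. Associativity of $\#$ then gives
\begin{equation*}
\tilde{\mbf{q}} \;\sim\; \tilde{\mbf{q}}\#(\mbf{a}\#\mbf{q}) \;=\; (\tilde{\mbf{q}}\#\mbf{a})\#\mbf{q} \;\sim\; \mbf{q},
\end{equation*}
so $\mbf{q}\#\mbf{a} \sim \tilde{\mbf{q}}\#\mbf{a} \sim \mbf{I}_4$. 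The only nonroutine ingredient is the Borel resummation lemma for semiclassical symbols; once this is granted, the rest is algebraic bookkeeping. The matrix-valued setting introduces no extra difficulty, because the only pointwise inversion performed is that of $\mbf{a}(x,\xi)$ itself, for which the hypothesis $\mbf{a}^{-1}\in \Ssf(m^{-1})$ is assumed. The one point that requires care is to take the Neumann series from the correct side relative to the construction of $\mbf{r}$, which is why $\mbf{q} = \mbf{a}^{-1}\#\mbf{n}$ and not $\mbf{n}\#\mbf{a}^{-1}$; this is the only place the noncommutativity of $\#$ could in principle cause trouble, and the telescoping identity above shows it does not.
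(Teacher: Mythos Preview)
The paper does not supply a proof of this lemma; it is stated as a standard fact in the semiclassical calculus and then used later (e.g., in Lemma~\ref{small_support} and in the proof of Theorem~\ref{jkmm2:resultindthm2}). Your argument is correct and is precisely the standard construction one finds in the references the paper already cites, in particular \cite{dimsjo99}: compute $\mbf{a}\#\mbf{a}^{-1}=\mbf{I}_4-\hbar\,\mbf{r}$ with $\mbf{r}\in\Ssf(1)$ from the Moyal expansion, invert by a Neumann series in $\hbar$ using Borel resummation, and match left and right parametrices by associativity of $\#$. The form of the expansion you obtain for $\mbf{q}=\mbf{a}^{-1}\#\mbf{n}$ agrees exactly with \eqref{jkmm2:pseudo-leminv-eq1}, and your remark about taking the Neumann series on the correct side in the noncommutative setting is the only point requiring attention; you handle it correctly.
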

For the proof of Theorem \ref{jkmm2:resultindthm2} it is convenient to have the following notion of microlocality: 
\begin{definition}
We say that $\mbf{u}\in \Lb ^2(\R^3 , \C ^4)$ is microlocally $\Oc (\ve (\hbar ))$ at $(x_0,\x _0)$  if there is $\mbf{a}\in \Ssf (1)$, invertible at $(x_0, \x _0)$, such that 
$$
\| \opw{\mbf{a}}\mbf{u} \| = \Oc (\ve (\hbar )), 
$$
uniformly as $\hbar \to 0$. 
\label{wavefront_def}
\end{definition}
\begin{lemma}
For $\mbf{a}$ as in Definition \ref{wavefront_def} we can find $\mbf{\h }_0 \in \Ssf (1)$ with support away from $(x_0, \x _0)$, such that $\mbf{a} + \mbf{\h }_0$ is everywhere invertible. 
\label{perturbation}
\end{lemma}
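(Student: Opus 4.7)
The plan is to alter $\mbf{a}$ only away from $(x_0,\x_0)$, smoothly interpolating it to the constant invertible matrix $\mbf{M}:=\mbf{a}(x_0,\x_0)$, and then to verify that pointwise invertibility is preserved throughout. Thus $\mbf{\h}_0$ will be taken as a smooth cut-off multiple of $\mbf{M}-\mbf{a}$ that vanishes in a neighborhood of $(x_0,\x_0)$.

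First, since $\mbf{M}\in \mathrm{GL}_{4}(\C)$, the open ball $B:=B\bigl(\mbf{M},r\bigr)\subset \mathrm{M}_{4}(\C)$ of radius $r:=\tfrac{1}{2}\|\mbf{M}^{-1}\|_{4\times 4}^{-1}$ consists entirely of invertible matrices and is convex. Continuity of $\mbf{a}$ then produces an open neighborhood $V$ of $(x_0,\x_0)$ in $\Tsf^\ast \R^3$ with $\mbf{a}(V)\subset B$. Fix a smaller neighborhood $W$ with $\overline{W}\subset V$, and a smooth Urysohn function $\phi\in \cs(\Tsf^\ast \R^3)$ with $0\le \phi\le 1$, $\phi\equiv 0$ on $W$, $\phi\equiv 1$ outside $V$, and all derivatives uniformly bounded; in particular $\phi\in \Ssf(1)$ as a scalar symbol.

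Set
\[
\mbf{\h}_0(x,\x):=\phi(x,\x)\bigl(\mbf{M}-\mbf{a}(x,\x)\bigr).
\]
Then $\supp \mbf{\h}_0 \subset \{\phi\ne 0\}$ is disjoint from $W$, so $\mbf{\h}_0$ is supported away from $(x_0,\x_0)$; since $\phi,\mbf{a}$ and the constant $\mbf{M}$ all lie in $\Ssf(1)$, the product $\mbf{\h}_0$ does too. Rewriting
\[
(\mbf{a}+\mbf{\h}_0)(x,\x)=\bigl(1-\phi(x,\x)\bigr)\mbf{a}(x,\x)+\phi(x,\x)\,\mbf{M},
\]
I would check pointwise invertibility on three regions: on $W$ the expression equals $\mbf{a}(x,\x)\in B$; outside $V$ it equals $\mbf{M}\in B$; on $V\setminus W$ it is a convex combination of two elements of the convex set $B$, hence itself in $B$. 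Every matrix in $B$ being invertible, $\mbf{a}+\mbf{\h}_0$ is invertible at every $(x,\x)\in \Tsf^\ast \R^3$.

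The only nontrivial point is the convex-combination step: one cannot simply add, say, $\lambda \mbf{I}_{4}$ or $i\lambda\mbf{I}_{4}$ away from $(x_0,\x_0)$, because nothing restricts the spectrum of $\mbf{a}(x,\x)$ in $\C$ and any such constant shift can create a zero eigenvalue somewhere. Interpolating toward the fixed reference value $\mbf{M}$ inside the convex invertibility ball $B$ sidesteps this issue, and I expect this to be the main (in fact essentially only) obstacle.
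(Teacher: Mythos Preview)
Your proof is correct and takes a genuinely different route from the paper's. The paper first normalises so that $\mbf{a}(x_0,\xi_0)=\mbf{I}_4$ (by right-multiplying by $\mbf{a}(x_0,\xi_0)^{-1}$), then adds a nonnegative scalar multiple of the identity, $\eta_0\mbf{I}_4$, with $\eta_0=0$ on a small ball about $(x_0,\xi_0)$ and $\eta_0=M>\sup\|\mbf{a}\|$ outside a slightly larger ball; invertibility in the transition annulus comes from the fact that there $\mbf{a}$ is still close to $\mbf{I}_4$, so its eigenvalues have real part bounded below by $1/2$ and adding $\eta_0\ge 0$ cannot kill them. Your construction instead interpolates $\mbf{a}$ toward the constant $\mbf{M}=\mbf{a}(x_0,\xi_0)$ and uses convexity of the invertibility ball $B(\mbf{M},r)\subset\mathrm{GL}_4(\C)$; this avoids the normalisation step and the eigenvalue bookkeeping, and yields the $\Ssf(1)$ membership of $\mbf{\h}_0$ by inspection. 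Your closing remark that ``one cannot simply add $\lambda\mbf{I}_4$'' is a bit too strong: the paper does exactly that, but only after normalising and arranging that the intermediate values of $\eta_0$ occur where $\mbf{a}$ is already near $\mbf{I}_4$. Both arguments are elementary; yours is slightly slicker in that it handles the general and normalised cases in one stroke.
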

\begin{proof}
First assume $\mbf{a}(x_0,\xi _0) = \mbf{I}_4$. 
Let $\l _{\textrm{min}}(x, \x )$ be equal to the smallest of the eigenvalues of $\mbf{a}$ at $(x, \x)$. Then there is $\ve > 0$ so that $\l _{\textrm{min}}(x, \x ) > 1/2$ for all $(x, \x ) \in B((x_0, \x _0), \ve )$. Pick $M > \sup _{\R ^{2n}}\| \mbf{a} (x, \x ) \|$ and choose $\h _0$ to be a non-negative smooth function such that 
$$
\h _0(x, \xi) = \begin{cases}0 \quad &\text{in }B((x_0, \x _0), \frac{\ve }{2}), \\ M \quad  &\text{in }\mathbb{R}^{2n}\setminus B((x_0, \x _0), \ve ) \end{cases}. 
$$
Letting $\mbf{\h _0} = \h _0 \mbf{I}_4 $ it is clear that $\mbf{a} + \mbf{\h }_0$ is everywhere positive definite. 

In the general case we consider $\widetilde{\mbf{a}} (x,\xi ):= \mbf{a}(x, \xi ) \mbf{a}^{-1}(x_0, \xi _0)$. Then $\widetilde{\mbf{a}}$ satisfies $\widetilde{\mbf{a}}(x_0, \xi _0) = \mbf{I}_4$ so by the first part of the proof there is $\widetilde{\mbf{\h }}_0$, supported away from $(x_0, \xi _0)$, such that $\widetilde{\mbf{a}} + \widetilde{\mbf{\h }}_0$ is elliptic. Thus $\mbf{a}(x, \xi ) + \widetilde{\mbf{\h }}_0(x, \xi ) \mbf{a}(x_0, \xi _0)$ is everywhere invertible and $\mbf{\h }_0 (x, \x):=\widetilde{\mbf{\h }}_0(x, \xi ) \mbf{a}(x_0, \xi _0)$ belong to $\Ssf (1)$ and has support away from $(x_0, \x _0)$.  
\end{proof}
The following lemma shows the strength of Definition \ref{wavefront_def}. 
\begin{lemma}
Assume that $\mbf{u} \in \Lb ^2(\R ^3 , \C ^4)$ is microlocally $\Oc (\ve (\hbar ))$ at $(x_0, \x _0 )$. Then, for any $\mbf{b} \in \Ssf (1)$ with sufficiently small support near $(x_0, \x _0)$, it holds that 
$$
\| \opw{\mbf{b}}\mbf{u} \| = \Oc (\ve (\hbar ) + \hbar ^\infty ), 
$$  
uniformly as $\hbar \to 0$. 
\label{small_support}
\end{lemma}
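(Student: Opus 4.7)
The plan is to reduce the desired bound on $\opw{\mbf{b}}\mbf{u}$ to the microlocal hypothesis $\|\opw{\mbf{a}}\mbf{u}\|=\Oc(\ve(\hbar))$ via a parametrix construction, the twist being that the parametrix must invert a globally elliptic extension of $\mbf{a}$, after which the spurious ``far-away'' contribution has to be eliminated by a disjoint-support argument.

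By Definition~\ref{wavefront_def}, fix $\mbf{a}\in \Ssf(1)$ invertible at $(x_0,\x_0)$ with $\|\opw{\mbf{a}}\mbf{u}\|=\Oc(\ve(\hbar))$. Lemma~\ref{perturbation} produces $\mbf{\h}_0\in \Ssf(1)$, supported away from $(x_0,\x_0)$, such that $\widetilde{\mbf{a}} := \mbf{a}+\mbf{\h}_0$ is everywhere (uniformly) invertible with $\widetilde{\mbf{a}}^{-1}\in \Ssf(1)$; Lemma~\ref{jkmm2:pseudo-leminv} then furnishes a parametrix $\mbf{q}\in \Ssf(1)$ with $\mbf{q}\#\widetilde{\mbf{a}}\sim \mbf{I}_4$, i.e.\ $\opw{\mbf{q}}\opw{\widetilde{\mbf{a}}}=I+R_\infty$ with $\|R_\infty\|_{\Bc(\Lb^2)} = \Oc(\hbar^\infty)$ by Proposition~\ref{jkmm2:pseudo-cvprop}. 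Shrinking the support of $\mbf{b}$ if necessary, one may assume $\supp \mbf{b}\cap \supp \mbf{\h}_0 = \emptyset$. Applying $\opw{\mbf{b}}$ to
\[
\mbf{u} = \opw{\mbf{q}}\opw{\mbf{a}}\mbf{u} + \opw{\mbf{q}}\opw{\mbf{\h}_0}\mbf{u} - R_\infty\mbf{u}
\]
reduces the task to estimating three pieces: the first is $\Oc(\ve(\hbar))$ since $\opw{\mbf{b}}\opw{\mbf{q}}$ is $\Lb^2$-bounded by Proposition~\ref{jkmm2:pseudo-cvprop}, and the third is $\Oc(\hbar^\infty)\|\mbf{u}\|$.

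The only real obstacle is the middle piece $\opw{\mbf{b}}\opw{\mbf{q}}\opw{\mbf{\h}_0}\mbf{u}$: although $\supp \mbf{b}$ and $\supp \mbf{\h}_0$ are disjoint, the parametrix symbol $\mbf{q}$ is not compactly supported, so one cannot appeal directly to the fact that the Moyal product of two symbols with disjoint supports is $\Oc(\hbar^\infty)$. The remedy is a cutoff splitting: pick $\p\in \ccs(\Tsf^\ast \R^3)$ with $\p\equiv 1$ near $\supp \mbf{b}$ and $\p\equiv 0$ near $\supp \mbf{\h}_0$, write $\mbf{q} = \p\mbf{q} + (1-\p)\mbf{q}$, and note that every term of the asymptotic product expansion \eqref{product_expansion} for $(\p\mbf{q})\#\mbf{\h}_0$ is a pointwise product of derivatives of $\p\mbf{q}$ and of $\mbf{\h}_0$, hence vanishes identically since $\supp(\p\mbf{q})\cap \supp \mbf{\h}_0 = \emptyset$; thus $(\p\mbf{q})\#\mbf{\h}_0 = \Oc(\hbar^\infty)$ in $\Ssf(1)$. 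The mirror-image argument gives $\mbf{b}\#((1-\p)\mbf{q}) = \Oc(\hbar^\infty)$ in $\Ssf(1)$. Proposition~\ref{jkmm2:pseudo-cvprop} promotes both bounds to $\Oc(\hbar^\infty)$ on $\Lb^2$, so the middle term is $\Oc(\hbar^\infty)\|\mbf{u}\|$. Combining the three pieces, and using that $\|\mbf{u}\|=\Oc(1)$ (the standing context in which this microlocal calculus is applied), yields $\|\opw{\mbf{b}}\mbf{u}\|=\Oc(\ve(\hbar)+\hbar^\infty)$.
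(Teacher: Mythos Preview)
Your proof is correct and follows essentially the same route as the paper: globally ellipticize $\mbf{a}$ via Lemma~\ref{perturbation}, build a parametrix $\mbf{q}$, and kill the spurious $\mbf{\h}_0$-contribution by a disjoint-support argument. The only difference is cosmetic: the paper writes the remaining term directly as $\opw{\mbf{b}\#\mbf{q}\#\mbf{\h}_0}\mbf{u}$ and observes that every term in the asymptotic expansion \eqref{product_expansion} of this triple Moyal product carries a derivative of $\mbf{\h}_0$ and a derivative of $\mbf{b}$ evaluated at the same point, hence vanishes; you instead split $\mbf{q}=\p\mbf{q}+(1-\p)\mbf{q}$ to reduce to two binary disjoint-support products, which is a slightly more explicit way of reaching the same conclusion.
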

\begin{proof}
Let $\mbf{\h }_0 $ be as in Lemma \ref{perturbation} and $\mbf{a}$ as in Definition \ref{wavefront_def}. Then we can find a parametrix $\mbf{q} \in \Ssf (1)$ with 
$$
\opw {\mbf{q}} \opw {\mbf{a} + \mbf{\h }_0} = \mbf{1} + \mbf{R},
$$
where $\|\mbf{R} \| = \Oc (\hbar ^\infty )$. Therefore, for any $\mbf{b} \in \Ssf (1)$, 
\begin{align*}
\opw {\mbf{b}} \mbf{u} = \opw {\mbf{b}} \opw {\mbf{q}} \opw {\mbf{a} + \mbf{\h }_0}\mbf{u} - \opw {\mbf{b}}\mbf{R}\mbf{u}.
\end{align*}
Using $\| \opw {\mbf{a}}\mbf{u} \| = \Oc (\ve (\hbar ))$ we obtain 
$$
\opw {\mbf{b}} \mbf{u} = \opw {\mbf{b} \# \mbf{q} \# \mbf{\h }_0}\mbf{u} + \Oc (\ve (\hbar )).  
$$
Since $\mbf{\h }_0$ has no support in a neighborhood of $(x_0 , \x _0)$ (see Lemma \ref{perturbation}) we can choose $\mbf{b}$ with sufficiently small support around $(x_0 , \x _0)$ so that $\supp (\mbf{b}) \cap \supp (\mbf{\h }_0) = \emptyset $. The lemma now follows from \eqref{product_expansion} and Proposition \ref{jkmm2:pseudo-cvprop}.  
\end{proof}

\section{Dirac and CAP Hamiltonians}
\label{jkmm2:diracope}
We introduce various assumptions and we define perturbed Dirac operators. Moreover, we introduce the CAP Hamiltonians.
\newline

\noindent
\textbf{The free Dirac operator}. The free semiclassical Dirac operator, describing the motion of a relativistic electron or positron 
without external forces, is the unique self-adjoint extension of the symmetric operator defined on $C_{0}^{\infty}(\R^{3}, \C^{4})$ 
in the Hilbert space $\Hc=\Lb^{2}(\R^{3}, \C^{4})$ by 
\begin{equation*}
\ham{D}_{0} := c \mbf{\a} \cdot \frac{\hbar}{i} \nabla + \b mc^{2} = -ic\hbar \sum _{j=1}^3 \a _j \pdf{}{x_j} + \b mc^2 , 
\label{jkmm2:freed-eq1}
\end{equation*}
where $\nabla=(\pd_{x_{1}}, \pd_{x_{2}}, \pd_{x_{3}})$ is the gradient, $c$ the speed of light, $m$ the electron mass, 
$\hbar$ the semiclassical parameter, and $\mbf{\a } := (\a _1, \a _2, \a _3)$ with $\a_{1}$, $\a_{2}$, $\a_{3}$, $\b$ being Hermitian 
$4 \times 4$ matrices, which satisfy the anti-commutation relations 
\begin{align*}
\begin{cases}
\a _i \a _j + \a _j \a _i = 2\d _{ij} \mbf{I}_{4}, \quad & \text{for } i,j=1,2,3, \\
\a _i \b + \b \a_i = 0, \quad &\text{for }i=1,2,3,
\end{cases}
\label{jkmm2:freed-eq2}
\end{align*}     
and $\b ^2 = \mbf{I}_{4}$.  For instance, one can use the ``standard representation'' 
\begin{equation*}
\a _i = \begin{pmatrix}0 &\s _i \\ \s _i &0 \end{pmatrix}, \quad \b = \begin{pmatrix}\mbf{I}_2 &0 \\ 0 &-\mbf{I}_2, \end{pmatrix}
\label{jkmm2:freed-eq3}
\end{equation*}
where 
\begin{equation}
\s _1 = \begin{pmatrix}0 &1 \\ 1 &0 \end{pmatrix}, \quad \s _2 = \begin{pmatrix}0 &-i \\ i &0 \end{pmatrix},\quad 
\s _3 = \begin{pmatrix}1 &0 \\ 0 &-1 \end{pmatrix}
\label{jkmm2:freed-eq4}
\end{equation}
are $2\times 2$ Pauli matrices. It is well-known that the resulting self-adjoint operator $\ham{D}_{0}$ has domain 
$\dom(\ham{D}_{0})=\Hb^{1}(\R^{3},\C^{4})$ and $\spec (\ham{D}_{0}) = \sme(\ham{D}_{0})=(-\infty , -mc^2 ]\cup [mc^2 , \infty )$; 
see, e.g., \cite{thaller92}.
\newline

\noindent
\textbf{Perturbed Dirac operator}.
To describe the interaction of a particle with external fields we perturb $\ham{D}_{0}$ by a potential 
$\ham{V} \in \cs (\R ^3) \otimes \mathrm{M}_{4}(\C )$, viewed as a multiplication operator on $\Hc$. 

\begin{assumption}
Let the potential $\ham{V} \, : \, \R^3 \to \mathrm{M}_{4}(\C)$ be Hermitian, smooth for all $x\in \R ^3$, and compactly supported; the number 
$R_0'>0$ is chosen such that $\supp \ham{V} \subset B(0,R_0')$.
\label{jkmm2:perturd-assumV}  
\end{assumption}

Under Assumption \ref{jkmm2:perturd-assumV} it is well-known that $\ham{D}:=\ham{D}_0 + \ham{V}$ is self-adjoint on 
$\dom(\ham{D}_0)=\Hb^1(\R^{3}, \C^{4})$. Moreover it follows from Weyl's theorem that 
$\sme(\ham{D}) = \sme(\ham{D}_0)=\spec(\ham{D}_{0})$; see, e.g., \cite[Section 4.3]{thaller92}. Henceforth 
we shall emphasize the dependence of $\hbar$ in $\ham{D}$ by writing $\ham{D}(\hbar)$.
\newline

\noindent
\textbf{Hamiltonian flow}.  Let $\mbf{d}_{0}$ be the principal symbol of $\ham{D}(\hbar)$ and let its eigenvalues be denoted by $\l_{j}$, $j =1, \ldots , 4$. 

The Hamiltonian trajectories (or bicharacteristics), denoted by $(x_{j}(t), \x_{j}(t))=:\F_{j} ^t (x_0, \x _0)$, $j=1, \ldots ,4$, are defined as the solutions of Hamilton's equations 
\begin{align*}
\begin{cases} 
x_{j} '(t)     &=\nabla_\x \l _{j} (x_{j} (t), \x _{j} (t)) \\
\x _{j} ' (t)  &= -\nabla_x \l _{j} (x_{j} (t), \x _{j} (t))
\end{cases}, \qquad (x_{j} (0), \x _{j} (0)) = (x_0, \x _0).
\label{jkmm2:hflow-eq1}
\end{align*}

\noindent
\textbf{Nontrapping condition}. We introduce the following nontrapping condition for the Hamiltonian flow generated by the 
eigenvalues $\l_{j}(x,\x)$, $j=1,\ldots ,4$.

\begin{definition}
We say that an energy band $J\subset \R$ is nontrapping for $\ham{D}(\hbar)$ if for any $R>0$ there exists $T_R>0$ 
such that 
\begin{equation*}
|x_{j} (t)|>R \text{ for } \l _{j} (x_0, \x _0) \in J \text{ provided }|t|>T_R \: \mbox{ and } \: j=1,\ldots ,4 . 
\label{jkmm2:hflow-eq2}
\end{equation*}
\label{jkmm2:hflow-def}
\end{definition}  

\noindent
\textbf{Hyperbolicity condition}. To avoid the difficulty of energy level crossings in certain situations, we shall introduce the following assumption.
\begin{assumption}
Distinct eigenvalues are said to satisfy the hyperbolicity condition if 
\begin{equation*}
        |\l _j (x,\x) - \l _k (x,\x )| \ge C \langle \x \rangle \quad \text{for all }(x,\x)\in \Tsf^{\ast} \R ^3  
\end{equation*}
for some constant $C>0$.
\label{jkmm2:assum-hyp} 
\end{assumption} 
\begin{example}
To illustrate Assumption~\ref{jkmm2:assum-hyp} we consider the Dirac operator describing a particle of mass $m$ and charge $e$ subject 
to external time-independent electromagnetic fields $\mbf{E}(x)= -\nabla \f(x)$ and $\mbf{B}(x)=\nabla \times \mbf{A}(x)$: 
\begin{equation*}
\ham{D}_{\lmbf{A},\f}(\hbar)= c \mbf{\a} \cdot \left( \frac{\hbar}{i} \nabla - \frac{e}{c} \mbf{A}(x)\right) + \b m c^{2} + e \f(x).
\end{equation*}
The principal symbol of $\ham{D}_{\lmbf{A},\f}(\hbar)$ is 
\begin{equation}
\mbf{d}_{0,\lmbf{A},\f}(x,\x) = c \mbf{\a} \cdot \left(\x-\frac{e}{c} \mbf{A}(x)\right)  + \b m c^{2} + e \f(x).
\label{jkmm2:perturd-eq1}
\end{equation}
For any $(x,\x)$ the symbol $\mbf{d}_{0,\lmbf{A},\f}(x,\x)$ is a Hermitian $4 \times 4$ matrix with two doubly degenerated eigenvalues 
\begin{equation*}
\l^{\pm}(x,\x) =  e \f(x) \pm \sqrt{ \left( c \x -e \mbf{A}(x) \right)^{2} +m^{2} c^{4} }  
\end{equation*}
associated with projection matrices 
\begin{equation*}
\mbf{\l}_{0}^{\pm} (x,\x) = \frac{1}{2} \left( \mbf{I}_{4} \pm \frac{ \mbf{\a} \cdot (c \x -e \mbf{A}(x) )+ \b mc^{2}}{ \sqrt{ ( c \x - e \mbf{A}(x) )^{2}+m^{2} c^{4} } }  \right)
\label{jkmm2:perturd-eq2}
\end{equation*}
onto the respective eigenspaces in $\C^{4}$. Since $\mbf{A}$ and $\f $ satisfy Asusmption~\ref{jkmm2:perturd-assumV} we may choose
\begin{equation*}
m(x,\x) : =  \sqrt{ \left( c \x- e \mbf{A} (x) \right)^{2}+m^{2} c^{4} }  ,
\label{jkmm2:perturd-eq3}
\end{equation*}
as an order function for the symbol $\mbf{d}_{0,\lmbf{A} ,\f}$. In particular,
\begin{equation*}
| \l^{+}(x,\x)-\l^{-}(x,\x) | = 2  m(x,\x) ,
\label{jkmm2:perturd-eq4}
\end{equation*}
which shows that Assumption~\ref{jkmm2:assum-hyp} holds true.
\end{example}   

\noindent
Cordes \cite{cordes82} imposes a similar condition on the eigenvalues of the symbol of an operator in a strictly hyperbolic system, and 
Bolte-Glaser \cite[Theorem 3.2]{bg04} prove a semiclassical version of Egorov's theorem under Assumption~\ref{jkmm2:assum-hyp}.
\newline

\noindent
\textbf{Complex absorbing potential Hamiltonian}.  
\begin{assumption}
Suppose $W \in \Lb ^\infty (\R ^3 , \C)$ is smooth and let $\ham{W} = W \mbf{I}_{4}$ be the operator on 
$\Lb^{2}(\R^{3}, \C^{4})$ induced by multiplication. Suppose, moreover, that $W$ satisfy the following properties:
\begin{enumerate}
\item[(i)] $\re W \ge 0$;
\item[(ii)] There is an $R_1>0$ such that $\supp W \subset \{|x|\ge R_1 \}$;
\item[(iii)] For some $\d _0>0$ and $R_2>R_1$ we have $\re W \ge \d _0$ for $|x|>R_2$;
\item[(iv)] $|\im W| \le C \sqrt{\re W}$ for some constant $C$.
\end{enumerate}
\label{jkmm2:capham-assumonw}
\end{assumption}
By property (i) $-i\ham{W}$ contributes a negative imaginary term which is necessary in order for the CAP to be absorbing. Property (ii) means absorption of the wave packet takes place away from the interaction region. If, on the other hand, $\ham{D}$ is assumed to be nontrapping on $\supp W$ in the sense of Definition \ref{jkmm2:hflow-def} and satisfy the hyperbolicity condition in Assumption \ref{jkmm2:assum-hyp} this condition can be relaxed, see Theorem \ref{jkmm2:resultindthm2}. Property (iii) is a strengthening of property (i) required to prove that eigenvalues of the CAP Hamiltonian defined below implies the existence of resonances nearby. We also allow $W$ to have a non-zero imaginary part as long as it is dominated by the real part in the sense of property (iv).   

In particular, we see that $i\ham{W}$ is not Hermitian. We now define two CAP operators. First, 
\begin{equation*}
\ham{J}_{\infty}(\hbar ) := \ham{D}(\hbar ) -i \ham{W} (x) \quad  \mbox{æon } \quad \Hc.
\label{jkmm2:capham-eq2}
\end{equation*}
Second, given $R > R_{2}$ let $\Hc_{R}(\hbar)$ be the restriction of $\Hc$ to the ball $B(0,R)$ and let 
$\ham{D}_{R}(\hbar)$ be the Dirichlet realization of $\ham{D}(\hbar)$ there. Define 
\begin{equation*}
\ham{J}_{R}(\hbar ) := \ham{D}_{R}(\hbar ) -i \ham{W} (x),
\label{jkmm2:capham-eq3}
\end{equation*}
We see that both $\ham{J}_{\infty}(\hbar)$ and $\ham{J}_{R}(\hbar)$ are closed unbounded operators with 
\begin{equation*}
\dom(\ham{J}_{\infty}(\hbar)) = \dom(\ham{D}(\hbar)) \quad \mbox{ and } \quad \dom(\ham{J}_{R}(\hbar)) = \dom(\ham{D}_{R}(\hbar))
\label{jkmm2:capham-eq4}
\end{equation*}
Furthermore, since $\re W \geq 0$, we see that $\C_{+}$ is contained in their resolvent sets.

\begin{remark}
In Physics and Chemistry the function $W$ is usually chosen to be real-valued, but, as in \cite{stefanov05}, we have 
defined $W$ as a complex-valued function.
\end{remark}
\section{Complex distortion and resonances}
\label{jkmm2:cscaling}
In the spirit of Aguilar-Balslev-Combes theory of resonances we summarize the spectral deformation theory for the Dirac operator, following 
Hunziker's approach, and we define resonances. Basic facts are stated without proofs; we refer to \cite{hunziker86,hissig96,khochman07} for 
details.

\subsection{Complex distortion}
We perform complex distortion outside of $B(0,R_2)\cup B(0,R_0')$ and for this purpose we introduce a smooth vector field $g$ with the following 
properties.

\begin{assumption} 
Let $g:\R ^3 \to \R ^3$ be a smooth function which satisfies:
\newline
\noindent
(i) $g(x) = 0 \text{ for }|x| \le R_0 \text{ where }R_0 > \max (R_0', R_2);$ 
\newline
\noindent
(ii) $g(x) = x \text{ for }|x| > R_0 + \y \text{ for some }\y >0;$ 
\newline
\noindent
(iii) $\sup _{x\in \R ^3} \| (D g)(x) \| < \sqrt{2}$ with $(D g)(x)$ being the Jacobian matrix of $g$. 
\label{jkmm2:cscaling-gfunc}
\end{assumption}

The parameter $R_0$ will be chosen suitably in different circumstances. This will not affect the set of resonances we study.

Henceforth we impose Assumption~\ref{jkmm2:cscaling-gfunc}. For fixed $\ve \in (0,1)$ and 
\begin{equation*}
\th \in D_\ve := \Big \{ \th \in \C \,:\, |\th | < r_\ve :=\frac{\ve }{\sqrt{1 + \ve ^2}} \Big \},
\label{jkmm2:cscaling-eq2}
\end{equation*}
we let $\f _\th :\R ^3 \to \R ^3$ be defined by $\f _\th (x) = x + \th g(x)$ and we denote the Jacobian determinant of $\f_{\th}$ by $J_{\th}$.
We then define $\mbf{U}_{\th} \, : \, \mbf{\tS} (\R^{3}, \C^4) \to \mbf{\tS} (\R^{3}, \C^4) $ for $\th \in (-r_\ve , r_\ve )$ by 
\begin{equation*}
\mbf{U}_{\th} \mbf{f} (x) = J_{\th}^{1/2}(x) \mbf{f}(\f_{\th} (x)).
\label{jkmm2:cscaling-eq3}
\end{equation*}  
One has: 
\newline

\noindent
\textbf{(P1)}: \textit{The map $\mbf{U}_{\th}$ extends, for $\th \in (-r_\ve , r_\ve )$, to a unitary operator on $\Lb^{2}(\R^{3},\C^{4})$}.  
\newline

\begin{definition}
Let $\Ac $ be the linear space of all entire functions $\mbf{f}=(f_i)_{1\le i \le 4}$ such that for any $0<\ve <1$ and $k\in \N$ we have 
\begin{equation*}
\lim _{\substack{|z| \to \infty \\ z\in C_\ve }} |z|^k |f_i(z)| = 0 \quad \text{for }1\le i\le 4 ,
\end{equation*}
where 
\begin{equation}
C_{\ve} = \{ z\in \C ^3 \,:\, |\im z | \leq \ve |\re z|, \, |\re z|>\max (R_0', R_2) \}.
\label{jkmm2:cscaling-def1-eq1}
\end{equation}
\label{jkmm2:cscaling-def1}
\end{definition}

We now define the class of \textit{analytic vectors}:

\begin{definition}
Let $\Bc \subset \Lb^{2}(\R^{3},\C^{4})$ be the set of $\mbf{\p} \in \Lb^{2}(\R^{3},\C^{4})$ such that there exists $\mbf{f} \in \Ac$ 
with $\mbf{f}(x)=\mbf{\p} (x)$ for $x\in \R ^3$. 
\label{jkmm2:cscaling-def2} 
\end{definition}
\noindent
Then:
\newline

\noindent
\textbf{(P2)}: \textit{The set $\Bc $ is dense in $\Lb^{2}(\R^{3},\C^{4})$}. 
\newline
This statement follows from the fact that $\Bc $ is a linear space which contain the set of Hermite functions which has a dense span.  

Moreover, for $\Bc $ to be a set of analytic vectors for $\mbf{U}_{\th}$ (see e.g. \cite{hissig96}), we need the following fact; wherein we 
allow $\th$ to become non-real. 
\newline

\noindent
\textbf{(P3)}: \textit{For all $\th \in D_\ve $ we have}
\begin{itemize}
\item[(i)] \textit{For all $\mbf{f} \in \Bc $ the map $\th \mapsto \mbf{U}_{\th} \mbf{f}$ is analytic}. 
\item[(ii)] \textit{$\mbf{U}_{\th} \Bc $ is dense in $\Lb^{2}(\R^{3},\C^{4})$. }
\end{itemize}

\noindent
We are now ready to define the family of spectrally deformed Dirac operators.

\begin{definition}
For $\th \in D_\ve ^+:= D_\ve \cap \{ {\rm Im}\, z \geq 0 \}$ we let
\begin{equation*}
\ham{D}_{\th}(\hbar) := \mbf{U}_{\th} \ham{D}(\hbar) \mbf{U}_{\th }^{-1} = \mbf{U}_{\th} \ham{D}_{0}(\hbar) \mbf{U}_{\th }^{-1} + \mbf{U}_{\th} \ham{V} \mbf{U}_{\th }^{-1} =:\ham{D}_{0,\th }(\hbar) + \ham{V}( \f _{\th}  (x)).
\label{jkmm2:cscaling-def3}
\end{equation*}
\end{definition}  

\noindent
We have: 
\newline

\noindent
\textbf{(P4)}: 
\textit{For $\th _0 \in D_\ve ^+$ the eigenvalues of $\ham{D}_{\th_0}(\hbar)$ are independent of the spectral deformation family $\{ \mbf{U}_{\th_{0}} \}$.}
\newline

\noindent
Khochman \cite[Lemma~3]{khochman07} proves the following representation of the free deformed Hamiltonian $\ham{D}_{0,\th }(\hbar) = \mbf{U}_{\th} \ham{D}_{0}(\hbar) \mbf{U}_{\th}^{-1}$.
\begin{lemma}
For $\th \in D_\ve $
\begin{equation*}
\ham{D}_{0,\th } (\hbar)= -\frac{1}{1+\th} ic \hbar  \mbf{\a }\cdot \nabla+ \b mc^2 + \mbf{Q}_{\th} (x, \hbar \partial _{x_j}), 
\end{equation*}
where $\mbf{Q}_{\th} (x, \hbar \partial _{x_j}) = \sum _{|\g | \le 1} \mbf{a}_{\g} (x,\th ) (\hbar \partial _{x_j})^{\g} $ with $\mbf{a}_{\g} (x, \cdot )$ analytic and $\Oc(\th)$,  and $\mbf{a}_{\g} (\cdot , \th ) \in C_{0}^{\infty}(B(0,R_0 + 2\eta ),\C^{4})$.
\end{lemma}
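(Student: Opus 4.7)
The plan is to directly conjugate the free Dirac operator through $\mbf{U}_\th$ by the chain rule, isolate the ``long-range'' part $-\tfrac{1}{1+\th}ic\hbar\,\mbf{\a}\cdot\nabla + \b mc^2$, and show that what remains is a differential operator of order at most one, with compactly supported coefficients that depend analytically on $\th$ and vanish at $\th=0$. Since $\b mc^2$ is multiplication by a constant matrix, it commutes with $\mbf{U}_\th$ and is unchanged by the conjugation; only the first-order part needs work.

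First I would compute $\mbf{U}_\th\partial_{x_k}\mbf{U}_\th^{-1}$. Set $M(y):=\mathbf{I}+\th(Dg)(y)$, so that $D\f_\th=M$ and $J_\th=\det M$. For $\th\in D_\ve$ one has $\|\th\,Dg\|<r_\ve\sqrt 2=\ve\sqrt 2/\sqrt{1+\ve^2}<1$ (using $\ve<1$) by Assumption~\ref{jkmm2:cscaling-gfunc}(iii), so $M(y)$ is everywhere invertible and $M^{-1}$ depends analytically on $\th$ via a Neumann series. A routine chain-rule calculation on analytic vectors in $\Bc$ gives
\[
\mbf{U}_\th\,\partial_{x_k}\,\mbf{U}_\th^{-1} \;=\; \sum_{i=1}^{3}[M^{-1}(y)]_{ik}\,\partial_{y_i} \;+\; b_k(y,\th),
\]
with $b_k(y,\th):=\sum_i [M^{-1}(y)]_{ik}\,J_\th^{1/2}(y)\,\partial_{y_i}J_\th^{-1/2}(y)$ smooth in $y$ and analytic in $\th$.

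Next I would invoke Assumption~\ref{jkmm2:cscaling-gfunc}(ii): for $|y|>R_0+\y$, $g(y)=y$, hence $Dg=\mathbf{I}$, $M(y)=(1+\th)\mathbf{I}$, and $J_\th(y)=(1+\th)^3$ is constant in $y$, so $b_k\equiv 0$ and $[M^{-1}]_{ik}=\d_{ik}/(1+\th)$ there. In that exterior region the conjugation therefore produces exactly $-\tfrac{ic\hbar}{1+\th}\mbf{\a}\cdot\nabla + \b mc^2$. Defining
\[
\mbf{Q}_\th(y,\hbar\partial_{y_j}) \;:=\; \ham{D}_{0,\th}(\hbar) - \Bigl(-\tfrac{1}{1+\th}ic\hbar\,\mbf{\a}\cdot\nabla+\b mc^2\Bigr),
\]
one obtains a first-order matrix-valued differential operator whose coefficients are supported in $\overline{B(0,R_0+\y)}\subset B(0,R_0+2\y)$; multiplying them by a standard cutoff in $C_0^\infty(B(0,R_0+2\y))$ that equals $1$ on $B(0,R_0+\y)$ puts them in the stated class.

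Finally, analyticity of $\mbf{a}_\g(y,\cdot)$ on $D_\ve$ follows from the Neumann-series analyticity of $M^{-1}$ and the smoothness of $J_\th^{\pm1/2}$ in $\th$. For the $\Oc(\th)$ statement, note that at $\th=0$ one has $\mbf{U}_0=\mathrm{Id}$, so $\ham{D}_{0,0}(\hbar)=\ham{D}_0(\hbar)=-ic\hbar\,\mbf{\a}\cdot\nabla+\b mc^2$, which agrees with the claimed main part evaluated at $\th=0$; hence $\mbf{Q}_0\equiv 0$, and analyticity then forces $\mbf{a}_\g(y,\th)=\Oc(\th)$. The only subtle point is justifying the chain-rule identity for complex $\th$: I would first derive it for real $\th\in(-r_\ve,r_\ve)$ on $\mbf{U}_\th\Bc$, where $\mbf{U}_\th$ is unitary, and then extend to all of $D_\ve$ using property (P3) and the density of $\mbf{U}_\th\Bc$. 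This analytic-continuation step is the main (and mild) obstacle.
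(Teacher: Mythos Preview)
Your proposal is correct and follows essentially the same route as the paper: both compute $\mbf{U}_\th\partial_j\mbf{U}_\th^{-1}$ by the chain rule, observe that for $|x|>R_0+\y$ the Jacobian reduces to $(1+\th)\mathbf{I}$ so only the term $(1+\th)^{-1}\partial_j$ survives, and then use a cutoff in $C_0^\infty(B(0,R_0+2\y))$ to localize the remainder. Your version is in fact slightly more complete, since you explicitly address the analyticity in $\th$ (via the Neumann series for $M^{-1}$) and the $\Oc(\th)$ vanishing at $\th=0$, points the paper leaves implicit.
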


\begin{proof}
Since
\begin{equation*}
  \ham{D}_{\th , 0} = \mbf{U}_{\th} \ham{D}_0 \mbf{U}_{\th }^{-1} = -ic \hbar \sum _{j=1}^3 \a _j \mbf{U}_{\th}  
  \partial _j \mbf{U}_{\th }^{-1} + \b mc^2   
\end{equation*}
we need only compute, for any $\mbf{f} \in \mbf{\tS}(\R^{3},\C^{4})$,  
\begin{align*}
  \mbf{U}_{\th} \partial _j \mbf{U}_{\th}^{-1} \mbf{f}(x) &= J_{\th}^{1/2}(x) \Big (\partial _j J_{\th}^{-1/2} (\f_{\th}^{-1}(x)) \mbf{f}(\f_{\th}^{-1}(x))\Big ) (\f_{\th} (x)) \\ 
&= -\frac{1}{2}J_{\th}^{-1}(x) \partial _j J_{\th}(x) \mbf{f}(x) + \sum _{k=1}^3 \partial_{j} \f_{\th , k}^{-1}\big (\f _\th (x) \big ) \partial_k \mbf{f}(x)    
\end{align*}
where we use the notation $\f_{\th}^{-1} = (\f_{\th ,1}^{-1}, \f_{\th ,2}^{-1}, \f_{\th ,3}^{-1})$. By 
Assumption~\ref{jkmm2:cscaling-gfunc} we have $\partial_{j} \f_{\th ,k}^{-1} (\f_{\th} (x))= (1+\th)^{-1} \d _{jk}$, $k=1,2,3$, provided $|x|>R_0+\eta $. Thus, if $\h \in \ccs (B(0,R_0+2\eta ))$ is taken to equal $1$ near $B(0,R_0 + \eta )$ we have
\begin{align*}
&\mbf{U}_{\th} \partial_{j} \mbf{U}_{\th}^{-1} \mbf{f}(x) = -\frac{1}{2} J_{\th}^{-1}(x) \partial_{j} J_{\th} (x) \mbf{f}(x) 
+ \frac{1}{1+\th }\partial_{j} \mbf{f}(x)(1 - \h ) \\
&\phantom{ooooooooooooooooooooooooooooooooooo}+ \sum _{k=1}^3 \partial_j  \f_{\th , k}^{-1} \big (\f_{\th} (x) \big ) \partial_{k} \mbf{f}(x)\h \\
&= \frac{1}{1+\th } \partial_{j} \mbf{f}(x) \\
&\phantom{oo}+ \Big \{-\frac{1}{2}J_{\th}^{-1}(x) \partial_{j} J_{\th} (x) \mbf{f}(x) - \frac{1}{1+\th } \partial_{j} \mbf{f}(x) \h + \sum _{k=1}^3 \partial_{j} \f_{\th , k}^{-1}\big (\f _\th (x) \big ) \partial_k \mbf{f}(x) \h \Big \}
\end{align*} 
where the terms in brackets are what makes $Q_{\th}$ after multiplication by $-ic\hbar \a _j $ and summation over 
$j=1,2,3$. 
\end{proof}

\begin{remark}
In particular we see that, for $\th \in D_\ve $, $\th \mapsto \ham{D}_{0,\th }$ is a holomorphic family of type (A) in the sense of Kato (see \cite[p. 375]{kato95}).
\end{remark}
\noindent
The above representation can be modified to the following more variable one:  
\begin{lemma}
For $\th \in D_\ve $ we have, using the principal branch of the cube root, 
\begin{align*}
\ham{D}_\th = -J_\th ^{-1/3}ic \hbar \mbf{\a }\cdot \nabla + \b mc^2 + \widetilde{\mbf{Q}}_\th(x,\hbar \partial _{x_j}),
\end{align*}
where $\widetilde{\mbf{Q}}_\th (x,\hbar \partial _{x_j})= \sum _{|\g | \le 1} \widetilde{\mbf{a}}_{\g} (x,\th ) (\hbar \partial _{x_j})^{\g}$ with the $\widetilde{\mbf{a}}_{\g}(\cdot , \th )$ supported in $\{R_0< |x| <R_0 + 2\eta \}$.  
\label{jkmm2:cscaling-lem3}  
\end{lemma}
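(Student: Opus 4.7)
The plan is to deduce this representation directly from the previous lemma by pulling out a smooth prefactor that agrees with $1$ on $\{|x|\le R_0\}$ and with $(1+\th)^{-1}$ on $\{|x|\ge R_0+\eta\}$, so that the difference between the two representations is supported only in the transition annulus. The natural candidate is $J_\th^{-1/3}$, because $J_\th(x)=\det(\mbf{I}_3+\th(Dg)(x))$ equals $1$ on $\{|x|\le R_0\}$ (where $g\equiv 0$) and equals $(1+\th)^3$ on $\{|x|\ge R_0+\eta\}$ (where $g(x)=x$). Before proceeding I would record that, since $\|Dg\|<\sqrt{2}$ and $|\th|<r_\ve$, the Jacobian matrix $\mbf{I}_3+\th\,Dg(x)$ has determinant lying in a sufficiently small complex neighbourhood of $1$ (shrinking $\ve$ in $D_\ve$ if necessary) for the principal branch of the cube root to make $J_\th^{-1/3}$ a well-defined smooth function of $(x,\th)$.

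Starting from the preceding lemma,
\begin{equation*}
\ham{D}_\th=-\tfrac{1}{1+\th}ic\hbar\,\mbf{\a}\cdot\nabla+\b mc^{2}+\mbf{Q}_\th(x,\hbar\partial_{x_j}),
\end{equation*}
I would add and subtract $J_\th^{-1/3}ic\hbar\,\mbf{\a}\cdot\nabla$ to obtain
\begin{equation*}
\ham{D}_\th=-J_\th^{-1/3}ic\hbar\,\mbf{\a}\cdot\nabla+\b mc^{2}+\widetilde{\mbf{Q}}_\th(x,\hbar\partial_{x_j}),
\end{equation*}
with
\begin{equation*}
\widetilde{\mbf{Q}}_\th(x,\hbar\partial_{x_j}):=\Big(J_\th^{-1/3}(x)-\tfrac{1}{1+\th}\Big)ic\hbar\,\mbf{\a}\cdot\nabla+\mbf{Q}_\th(x,\hbar\partial_{x_j}).
\end{equation*}
The operator $\widetilde{\mbf{Q}}_\th$ manifestly has the form $\sum_{|\g|\le 1}\widetilde{\mbf{a}}_\g(x,\th)(\hbar\partial_{x_j})^\g$ with the coefficients $\widetilde{\mbf{a}}_\g(\cdot,\th)$ smooth on $\R^3$, and by construction analytic in $\th\in D_\ve$ (and of order $\Oc(\th)$, inherited from $\mbf{Q}_\th$ and from the Taylor expansion of $J_\th^{-1/3}-\tfrac{1}{1+\th}$ at $\th=0$).

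The core step is to verify that $\widetilde{\mbf{a}}_\g(\cdot,\th)$ vanishes outside the annular shell $\{R_0<|x|<R_0+2\eta\}$. On $\{|x|<R_0\}$, since $g\equiv 0$ we have $\f_\th=\mathrm{id}$ and $J_\th\equiv 1$, so $\mbf{U}_\th$ acts as the identity on any function supported there; consequently $\ham{D}_\th$ coincides pointwise with $\ham{D}_0=-ic\hbar\,\mbf{\a}\cdot\nabla+\b mc^{2}$ in that region, which is exactly the ``free'' part of the new representation since $J_\th^{-1/3}=1$ there. Hence $\widetilde{\mbf{Q}}_\th\equiv 0$ on $\{|x|<R_0\}$. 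On $\{|x|>R_0+\eta\}$, $g(x)=x$ gives $\f_\th(x)=(1+\th)x$ and $J_\th=(1+\th)^3$, so $J_\th^{-1/3}=(1+\th)^{-1}$; the same direct computation of $\mbf{U}_\th\partial_j\mbf{U}_\th^{-1}$ as in the previous lemma shows $\ham{D}_\th=-\tfrac{1}{1+\th}ic\hbar\,\mbf{\a}\cdot\nabla+\b mc^{2}$ there, and again $\widetilde{\mbf{Q}}_\th\equiv 0$. Combining this with the outer support bound $B(0,R_0+2\eta)$ for $\mbf{Q}_\th$ given by the previous lemma yields $\supp\widetilde{\mbf{a}}_\g(\cdot,\th)\subset\{R_0<|x|<R_0+2\eta\}$.

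The main (minor) obstacle is ensuring that the principal cube root of $J_\th$ is genuinely smooth and $\th$-analytic across the whole transition region uniformly in $x$; this is handled by choosing $\ve$ sufficiently small so that $J_\th(x)$ stays in a fixed compact subset of $\C\setminus(-\infty,0]$ for all $x\in\R^3$ and all $\th\in D_\ve$. Once this is in place the argument is essentially algebraic and reduces to the pointwise comparison outlined above.
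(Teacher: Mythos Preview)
Your proof is correct and is precisely the argument the paper has in mind: the paper does not actually supply a separate proof of this lemma, introducing it only with the sentence ``The above representation can be modified to the following more variable one,'' and your add-and-subtract of $J_\th^{-1/3}ic\hbar\,\mbf{\a}\cdot\nabla$ followed by the pointwise check on $\{|x|<R_0\}$ (where $J_\th=1$ and $\ham{D}_{0,\th}=\ham{D}_0$) and on $\{|x|>R_0+\eta\}$ (where $J_\th=(1+\th)^3$) is exactly the intended modification of Khochman's representation. One minor remark: the lemma as printed writes $\ham{D}_\th$ rather than $\ham{D}_{0,\th}$, and indeed the paper itself invokes it for $\ham{D}_{0,\th}$ in the proof of the next proposition; your sentence ``$\ham{D}_\th$ coincides pointwise with $\ham{D}_0$'' on $\{|x|<R_0\}$ should accordingly be read for the free part, the potential $\ham{V}(\f_\th(\cdot))=\ham{V}$ being a separate zeroth-order term supported in $B(0,R_0')\subset B(0,R_0)$ which plays no role in the support claim for $\widetilde{\mbf{Q}}_\th$.
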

\noindent
Using Lemma~\ref{jkmm2:cscaling-lem3} we are now ready to show the following:
\begin{proposition}
For $\theta \in D_{\ve }^+$, $\re z > mc^2$ and any $K\in \Z _+$ there is $C_K>0$ such that
$$\|(\ham{D}_\theta - z)^{-1}\| \le \frac{C_K}{\im z} \quad \text{for }\im z > \hbar ^K,$$
provided $\hbar $ is small enough. 
\label{jkmm2:cscaled-resolv-upper}
\end{proposition}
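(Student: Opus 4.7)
My plan is to establish the resolvent bound by constructing an approximate inverse for $\ham{D}_\theta - z$ via semiclassical pseudodifferential calculus, exploiting the fact that complex distortion renders the principal symbol $\mbf{d}_\theta(x,\xi) - z$ elliptic in a $\tau$-sensitive way when $z = E + i\tau$ with $E > mc^2$ and $\tau > 0$.

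First, using the representation in Lemma~\ref{jkmm2:cscaling-lem3}, I would analyze the spectrum of $\mbf{d}_\theta(x,\xi)$ in each region. For $|x| \le R_0$, $J_\theta = 1$ and $\widetilde{\mbf{Q}}_\theta = 0$, so $\mbf{d}_\theta$ coincides with the Hermitian Dirac symbol $c\mbf{\alpha}\cdot\xi + \b mc^2 + \ham{V}(x)$ and has real eigenvalues $\lambda_j(x,\xi)$. For $|x| \ge R_0 + 2\eta$, the symbol reduces to $(1+\theta)^{-1} c\mbf{\alpha}\cdot\xi + \b mc^2$, whose eigenvalues $\lambda^\pm_\theta(\xi) = \pm\bigl(c^2|\xi|^2(1+\theta)^{-2} + m^2c^4\bigr)^{1/2}$ (principal branch) satisfy, for $\im\theta > 0$, $\im\lambda^+_\theta \le 0$ and $\im\lambda^-_\theta \ge 0$, with $\re\lambda^-_\theta \le -mc^2$. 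The transition annulus is smoothly bridged by $\widetilde{\mbf{Q}}_\theta$, which contributes an $\Oc(\theta)$ correction.

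Second, I would verify the key symbolic ellipticity estimate: for $z = E + i\tau$ with $E > mc^2$ and $\tau > 0$, $|\lambda_j(x,\xi) - z| \ge c_\theta\,\tau$ uniformly in $(x,\xi) \in \Tsf^\ast\R^3$. In the Hermitian inner region this is immediate since $\lambda_j \in \R$. In the distorted outer region, the $+$-branch satisfies $|\lambda^+_\theta - z| \ge |\im\lambda^+_\theta - \tau| \ge \tau$, while the $-$-branch satisfies $|\lambda^-_\theta - z| \ge \re z - \re\lambda^-_\theta \ge 2mc^2$. A case analysis in the annulus, using the boundedness of $\widetilde{\mbf{Q}}_\theta$, yields the same lower bound with a possibly smaller constant.

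Third, I would invoke Lemma~\ref{jkmm2:pseudo-leminv} to construct a parametrix $\mbf{Q}(z) = \opw{\mbf{q}}$ with principal symbol $\mbf{q}_0 = (\mbf{d}_\theta - z)^{-1}$, so that by Calderon--Vaillancourt (Proposition~\ref{jkmm2:pseudo-cvprop}) $\|\mbf{Q}(z)\|_{\Lc^2 \to \Lc^2} \le C/\tau$. After iterating the asymptotic expansion $N$ times, the remainder $\mbf{R}_N(z) := (\ham{D}_\theta - z)\mbf{Q}(z) - \mbf{I}$ satisfies an estimate $\|\mbf{R}_N(z)\| \le C_N \hbar^N \tau^{-\gamma_N}$. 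Given $K \in \Z_+$, picking $N$ so large that $N - K\gamma_N \ge 1$ yields $\|\mbf{R}_N(z)\| \le 1/2$ for $\tau \ge \hbar^K$ and $\hbar$ sufficiently small; Neumann inversion then gives
$$(\ham{D}_\theta - z)^{-1} = \mbf{Q}(z)\bigl(\mbf{I} + \mbf{R}_N(z)\bigr)^{-1},$$
of norm at most $2\|\mbf{Q}(z)\| \le C_K/\tau$.

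The main obstacle is the bookkeeping in the iteration: differentiation of $\mbf{q}_0$ via $\partial \mbf{q}_0 = -\mbf{q}_0(\partial\mbf{d}_\theta)\mbf{q}_0$ introduces additional factors of $\tau^{-1}$, which could cause $\gamma_N$ to grow too fast. The way around this is to work in an adapted symbol class with order function $m_z(x,\xi) = \tau + \dist\!\bigl(z,\sigma(\mbf{d}_\theta(x,\xi))\bigr)$, so that $\mbf{q}_0 \in \Ssf(m_z^{-1})$ with all derivatives controlled by further negative powers of $m_z$, and the Moyal bracket terms then involve cancellations at the principal level. With this refinement the remainder becomes $\Oc(\hbar^N)$ uniformly in $\tau > 0$, closing the estimate for every $K$.
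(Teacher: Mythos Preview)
Your parametrix strategy is a natural idea, but it is genuinely different from the paper's argument and, as written, has a real gap in the remainder control.

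The paper does \emph{not} build a symbolic inverse. Instead it computes the imaginary part of the weighted quadratic form $\langle J_\theta^{1/3}(\ham{D}_\theta-\re z)\mbf{u},\mbf{u}\rangle$. The point of the weight $J_\theta^{1/3}$ is that, by Lemma~\ref{jkmm2:cscaling-lem3}, it cancels the $J_\theta^{-1/3}$ in front of $-ic\hbar\mbf{\alpha}\cdot\nabla$, rendering the top-order part symmetric; only the mass term, the $-\re z$ term, and the compactly supported first-order piece $\widetilde{\mbf{Q}}_\theta$ survive in the imaginary part. The first two combine to $(mc^2-\re z)\langle\im(J_\theta^{1/3})\mbf{u},\mbf{u}\rangle\le 0$, and the $\widetilde{\mbf{Q}}_\theta$ contribution is bounded by $C\|\mbf{u}\|_{\Hb^1(U)}\|\mbf{u}\|$, where $U$ is the transition annulus. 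That $\Hb^1(U)$ norm is then controlled by a nested-cutoff argument using the \emph{global} invertibility of $\ham{D}_{0,\theta}-z$ (not pointwise symbol ellipticity in the annulus), each commutator $[\ham{D}_{0,\theta},\mbf{\h}_j]$ gaining a factor of $\hbar$; iterating $K$ times yields
\[
\|\mbf{u}\|_{\Hb^1(U)}\le C_K\bigl(\|(\ham{D}_\theta-z)\mbf{u}\|+\hbar^K\|\mbf{u}\|\bigr),
\]
which is exactly the mechanism producing the $\hbar^K$ threshold.

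Your approach founders at the step you yourself flag. Each derivative of $\mbf{q}_0=(\mbf{d}_\theta-z)^{-1}$ costs a factor $\tau^{-1}$ (this already happens in the undistorted Hermitian region, where the eigenvalues are real and $|\lambda_j-z|$ can equal $\tau$), so the $N$-step Moyal remainder is of size $\hbar^N\tau^{-\gamma_N}$ with $\gamma_N$ growing at least linearly in $N$. For $\tau\ge\hbar^K$ this gives $\hbar^{N-K\gamma_N}$, which cannot be made small for $K$ larger than a fixed constant, no matter how large $N$ is. Your proposed remedy, working in $\Ssf(m_z^{-1})$ with $m_z(x,\xi)=\tau+\dist(z,\sigma(\mbf{d}_\theta(x,\xi)))$, does not rescue this: $m_z$ is \emph{not} a temperate order function uniformly as $\tau\to0$, since $m_z(X)/m_z(Y)$ can be of order $|X-Y|/\tau$, which is not polynomially bounded in $|X-Y|$. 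The ``cancellations at the principal level'' you invoke are not explained, and in the inner Hermitian region there is no distortion-induced damping to produce them. A secondary gap is the transition annulus: for general $g$ satisfying Assumption~\ref{jkmm2:cscaling-gfunc} (no radiality is assumed), the eigenvalues $\pm\sqrt{c^2\eta\cdot\eta+m^2c^4}$ with $\eta=(I+\theta Dg(x))^{-T}\xi$ need not stay in the closed lower half-plane, so the lower bound $|\lambda_j-z|\ge c_\theta\tau$ there is not a triviality. The paper sidesteps both issues by never needing pointwise ellipticity in the annulus.
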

\begin{proof}
We prove the result by studying the quantity
\begin{equation}
  \im \langle J_\th ^{1/3} ( \ham{D}_ \th - \re z) \mbf{u}, \mbf{u} \rangle .
\label{jkmm2:clusprf-lem1-eq2}
\end{equation}
Take $\h \in \ccs (B(0,R_0))$ which equals $1$ near $B(0,R_0')$. Then, using the fact that $J_{\th}(x) = 1$ 
and $\ham{D}_{\th} = \ham{D}$ for $|x|<R_0$,    
\begin{align*}
& \im \langle J_{\th}^{1/3} ( \ham{D}_{\th} - \re z ) \mbf{u}, \mbf{u} \rangle = \im \langle (\ham{D} - \re z ) \mbf{\h} \mbf{u}, \mbf{\h} \mbf{u} \rangle  \\
& +\im \langle ( \ham{D} - \re z ) \mbf{\h} \mbf{u}, (\mbf{1}-\mbf{\h} ) \mbf{u} \rangle + \im \langle ( \ham{D} - \re z )(\mbf{1}-\mbf{\h} ) \mbf{u}, \mbf{\h}  \mbf{u} \rangle \\
& +\im \langle J_{\th}^{1/3}( \ham{D}_{\th} - \re z )(\mbf{1}- \mbf{\h} ) \mbf{u} , (\mbf{1}- \mbf{\h} )  \mbf{u} \rangle \\
& =\im \langle J_{\th}^{1/3}( \ham{D}_{\th} - \re z )(\mbf{1}- \mbf{\h} ) \mbf{u} , (\mbf{1}- \mbf{\h} )  \mbf{u} \rangle , 
\label{jkmm2:clusprf-lem1-eq3}
\end{align*} 
because $\ham{D}$ is symmetric. Consequently, in order to estimate 
$\im \langle J_{\th}^{1/3}( \ham{D}_{\th} - \re z ) \mbf{u}, \mbf{u} \rangle $, it suffices to consider $\mbf{u}$ with support in 
$\R^{3} \setminus B(0,R_0')$, meaning we can replace $\ham{D}_\th $ by $\ham{D}_{0,\th } $.

By Lemma \ref{jkmm2:cscaling-lem3} we may write 
$$\ham{D}_{0,\th } = -J_\th ^{-1/3}ic\hbar \mbf{\a }\cdot \nabla + \b mc^2 + \widetilde{\mbf{Q}}_\th  $$
where $ \widetilde{\mbf{Q}}_\th$ is a first order differential operator having smooth coefficients supported in some subset $U$ of $B(0,R_0 + 2\eta ) \setminus B(0,R_0)$. Thus \eqref{jkmm2:clusprf-lem1-eq2} becomes
\begin{align*}
\im \langle J_\th ^{1/3} ( \ham{D}_ \th - \re z) \mbf{u}, \mbf{u} \rangle &= mc^2 \langle \im (J_\th ^{1/3})\b \mbf{u}, \mbf{u} \rangle 
+ \im \langle J_\th ^{1/3}\widetilde{\mbf{Q}}_\th \mbf{u}, \mbf{u} \rangle \\ 
&\phantom{ooooooooooooooooooooo}- \re z \langle \im (J_\th ^{1/3})\mbf{u}, \mbf{u} \rangle \\
&\le (mc^2 - \re z)\langle \im (J_\th ^{1/3}) \mbf{u}, \mbf{u} \rangle + C\|\mbf{u} \|_{\Hb ^1(U)} \|\mbf{u}\|.
\end{align*}
Now, the first term on the right is non-positive. Moreover,
\begin{align*}
\|\mbf{u} \|_{\Hb ^1(U )} &= \|(\mbf{1} - \mbf{\h })\mbf{u} \|_{\Hb ^1(U)} \le C_1\| (\ham{D}_{0,\theta } - z)(\mbf{1} - \mbf{\h })\mbf{u} \| = C_1\| (\ham{D}_{\theta } - z)(\mbf{1} - \mbf{\h })\mbf{u} \| \\
&\le C_1(\| (\ham{D}_\theta  - z)\mbf{u} \| + \|[\ham{D}_{0,\theta },  \mbf{\h }] \mbf{u}\|) \\
&\le C_1(\| (\ham{D}_\theta  - z)\mbf{u} \| +\hbar  \|\mbf{u}\|_{\supp (\nabla \h)} )  
\end{align*}
since $z \not \in \spec  (\ham{D}_{0,\theta })$ (see Sec. \ref{jkmm2:restate}). Next take $\h _2$ having the same properties as $\h $ but also $\h _2 \prec \h $. Then, in the same way, 
\begin{align*}
\|\mbf{u}\|_{\supp (\nabla \h)} =  \|(\mbf{1} - \mbf{\h}_2)\mbf{u}\|_{\supp (\nabla \h)} \le C_2(\| (\ham{D}_\theta  - z)\mbf{u} \| +\hbar  \|\mbf{u}\|_{\supp (\nabla \h _2)} ). 
\end{align*}
Continuing in this way, with $\h _K \prec \cdots \prec \h _2 \prec \h $, we obtain, for any $K\in \Z _+$,
\begin{equation}
\|\mbf{u} \|_{\Hb ^1(U)} \le C_K(\| (\ham{D}_\theta  - z)\mbf{u} \| +\hbar ^K \|\mbf{u}\|_{\supp (\nabla \h _K)} ), \quad \text{for }z \not \in \spec  (\ham{D}_{0,\theta }) .   
\label{jkmm2:ellipticDtheta}
\end{equation} 
Therefore 
\begin{align*}
\im \langle J_\th ^{1/3} ( \ham{D}_ \th - \re z) \mbf{u}, \mbf{u} \rangle \le C_K (\| (\ham{D}_\theta  - z)\mbf{u}\| + \hbar ^K \|\mbf{u}\|) \|\|\mbf{u} \| 
\end {align*}  
which together with 
\begin{align*}\im \langle J_\th ^{1/3} ( \ham{D}_ \th - z) \mbf{u}, \mbf{u} \rangle = \im \langle J_\th ^{1/3} ( \ham{D}_ \th - \re z) \mbf{u}, \mbf{u} \rangle  
-(\im z) \langle \re (J_\th ^{1/3})\mbf{u}, \mbf{u} \rangle 
\end{align*}
gives 
$$C \| (\ham{D}_ \th - z)\mbf{u} \| \ge (C_0\im z - C_{K+1}\hbar ^{K+1})\|\mbf{u}\| \ge \frac{C_0}{2}\im z \|u \|, \quad \im z > \hbar ^K, $$
provided $\hbar $ is sufficiently small. This proves the lemma. 
\end{proof}   
%
%
%
\subsection{Resonances}
\label{jkmm2:restate}
Let
\begin{equation*}
  \Sigma _\th := \left\{ \, z \in \C \, : \, z = \pm c\Big ( \frac{\l }{(1+\th )^2}+m^2c^2 \Big )^{1/2},\,\l \in {\rm [} 0, \infty {\rm )} \,  \right\},
\label{jkmm2:restate-eq1}
\end{equation*}
where we have taken the principal branch of the square root function, and put (see Figure~\ref{fig1}) 
\begin{equation*}
  S_{\th _0} := \bigcup _{\th \in D_{\ve , \th _0}^+} \Sigma _\th 
\label{jkmm2:restate-eq2}
\end{equation*} 
where
\begin{equation*}
 D_{\ve , \th _0}^+ := \left\{ \, \th \in D_{\ve }^+\,:\, \arg (1 + \th) < \arg (1 + \th_{0} ),\; \frac{1}{|1 + \th |} < \frac{1}{|1 + \th_{0} |} \, \right\}.
\label{jkmm2:restate-eq3}
\end{equation*}

\noindent
We have the following results, where the second asserts that the essential spectrum of $\ham{D}_{0, \th}(\hbar)$ is invariant under the influence 
of a potential satisfying Assumption \ref{jkmm2:perturd-assumV}.
\newline

\noindent
\textbf{(P5)}: $\sme(\ham{D}_{0,\th}(\hbar)) = \Sigma _{\th}$.
\newline

\noindent
\textbf{(P6)}: $\sme(\ham{D}_{\th}(\hbar) ) = \Sigma _\th$ .
\newline  

\noindent
In view of Property~\textbf{(P4)} the following definition makes sense.

\begin{definition}
The set of resonances of $\ham{D}(\hbar)$ in $S_{\th _0}\cup \R$, designated $\Res(\ham{D}(\hbar))$ 
(with $\th _0$ suppressed), is the set of eigenvalues of $\ham{D}_{\th _0}(\hbar)$. If $z_0$ is a 
resonance, then the spectral (or Riesz) projection
\begin{equation*}
\mbf{\Pi}_{z_0} = \frac{1}{2\pi i} \oint \limits _{|z-z_0|\ll 1} (\ham{D}_{\th}(\hbar) - z)^{-1}\,dz
\label{jkmm2:restate-riesz}
\end{equation*}
makes sense and has finite rank. We define the multiplicity of $z_0$ to be the rank of $\mbf{\Pi}_{z_0}$.
\label{jkmm2:restate-def1}
\end{definition}

We will restrict ourselves to the study of resonances having positive energies. Namely, we assume that the 
resonances are located in a rectangle $\Rc$ satisfying the following: 

\begin{assumption}
We say that a complex rectangle $\Rc$ as in (\ref{jkmm2:notation-eq5}) satisfies the assumption 
$(\textbf{A}_{\Rc }^+)$ if $l>mc^2$, $b<0<t$ and there exists $\th _0\in D_\ve ^+$ such that 
$\Rc \cap \Sigma _{\th _0} = \emptyset $.   (cf. Figure~\ref{fig1}).  
\label{jkmm2:restate-rectangle}
\end{assumption}

\noindent
In Figure~\ref{fig1} we show a typical scenario when we fix a $\th _0 \in D_\ve ^+$ to uncover the resonances in $S_{\th _0}$.
\begin{figure}[hbt!]
\begin{center}
\def\svgwidth{0.8\columnwidth}
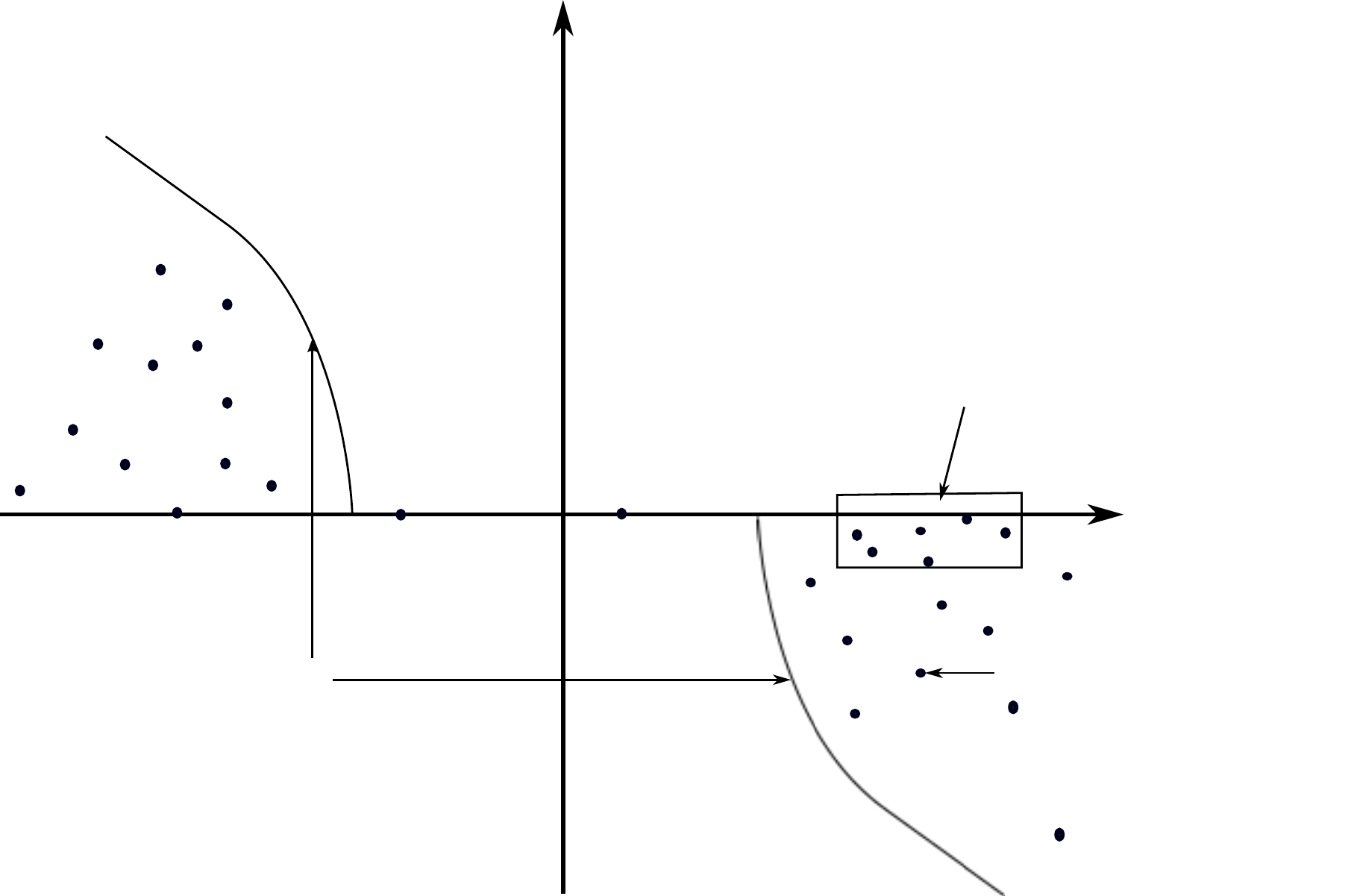
\end{center}
\caption{The set $S_{\th _0}$ and a rectangle $\Rc $ satisfying $(\textbf{A}_{\Rc }^+)$.}
\label{fig1}
\end{figure}

\noindent
The following upper bound on the number of resonances, not necessarily close to the real axis, will be used repeatedly throughout the paper.  
A proof can be found in Khochman \cite{khochman07}, who follows Nedelec's work on matrix valued Schr\"{o}dinger operators 
\cite{nedelec01} (in turn inspired by Sj\"{o}strand \cite{sjo97b}).

\begin{theorem}
Let $\ham{V}$ satisfy Assumption~\ref{jkmm2:perturd-assumV} and let $\Rc $ be a complex rectangle satisfying Assumption 
$(\textbf{A}_{\Rc }^+)$. Then 
\begin{equation*}
    \coun \left( \ham{D}(\hbar), \Res (\ham{D}(\hbar)) \cap \Rc \right) \leq C (\Rc ) \hbar ^{-3}.
\end{equation*} 
\label{jkmm2:restate-thm1}
\end{theorem}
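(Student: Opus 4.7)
The plan is to reduce the counting of resonances to counting zeros of a Fredholm determinant and to bound that count via Jensen's formula. By Property (P4) and Definition \ref{jkmm2:restate-def1}, the resonances of $\ham{D}(\hbar)$ in $\Rc$ are the eigenvalues of $\ham{D}_{\theta_0}(\hbar)$ in $\Rc$, counted with algebraic multiplicity. Since $\supp \ham{V} \subset B(0,R_0')$ and $R_0 > R_0'$, we have $\phi_{\theta_0}(x) = x$ on $\supp \ham{V}$, so the distorted potential coincides with $\ham{V}$ and
\begin{equation*}
\ham{D}_{\theta_0}(\hbar) - z = (\ham{D}_{0,\theta_0}(\hbar) - z)\bigl(\mathbf{1} + K(z,\hbar)\bigr),\qquad K(z,\hbar) := (\ham{D}_{0,\theta_0}(\hbar) - z)^{-1} \ham{V},
\end{equation*}
on a fixed open set $\Omega$ containing $\Rc$ with $\Omega \cap \Sigma_{\theta_0} = \emptyset$, whose existence follows from Property (P5) and Assumption $(\mathbf{A}_{\Rc}^+)$. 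The eigenvalues of $\ham{D}_{\theta_0}(\hbar)$ in $\Omega$ then coincide, with multiplicities, with the zeros of the holomorphic function $g(z,\hbar) := \det(\mathbf{1} + K(z,\hbar))$.

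Next I would establish the trace-norm estimate $\|K(z,\hbar)\|_{\Bc_1} \leq C\hbar^{-3}$ uniform on $\Omega$; this is the principal technical step. Using Lemma \ref{jkmm2:pseudo-leminv} one constructs a parametrix $\mbf{q}(\cdot,\cdot;z) \in \Ssf(\langle\xi\rangle^{-1})$ for the matrix symbol $\mbf{d}_{0,\theta_0}(x,\xi) - z$, which is elliptic on $\Omega$ precisely because $\Omega \cap \Sigma_{\theta_0} = \emptyset$; thus $K(z,\hbar)$ differs from $\opw{\mbf{q}\#\ham{V}}$ by a smoothing $\Oc(\hbar^{\infty})$ remainder. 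Since $\ham{V}$ has compact $x$-support and $\mbf{q}$ decays in $\xi$, the symbol $\mbf{q}\#\ham{V}$ is confined to a bounded region of $\Tsf^{\ast}\R^3$, and a standard Weyl-type singular-value count in three space dimensions yields the $\hbar^{-3}$ bound; the matrix structure costs only a dimensional factor. Combined with \eqref{jkmm2:ope-eq3} this gives the upper bound $|g(z,\hbar)| \leq \exp(C_1 \hbar^{-3})$ throughout $\Omega$.

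For the matching lower bound I would select a reference point $z_0 \in \Omega$ with $\im z_0$ equal to a fixed positive constant. Proposition \ref{jkmm2:cscaled-resolv-upper} furnishes $\|(\ham{D}_{\theta_0}-z_0)^{-1}\| = \Oc(1)$, so $\mathbf{1}+K(z_0,\hbar)$ is invertible with operator norm $\Oc(1)$. Using the identity $(\mathbf{1}+K)^{-1}=\mathbf{1}-K(\mathbf{1}+K)^{-1}$ together with \eqref{singular-values-compact-bounded-2} and \eqref{jkmm2:ope-eq3} applied to the trace-class operator $K(z_0,\hbar)(\mathbf{1}+K(z_0,\hbar))^{-1}$ gives $|g(z_0,\hbar)|^{-1}\leq\exp(C_2\hbar^{-3})$, that is $|g(z_0,\hbar)|\geq\exp(-C_2\hbar^{-3})$. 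Applying Jensen's formula to $g(\cdot,\hbar)$ on a disc $D(z_0,R)\subset\Omega$ chosen so that $\Rc\subset D(z_0,R/2)$ then produces
\begin{equation*}
\coun\bigl(\ham{D}(\hbar),\Res(\ham{D}(\hbar))\cap\Rc\bigr) \leq \frac{\log\max_{|z-z_0|=R}|g(z,\hbar)| - \log|g(z_0,\hbar)|}{\log 2} \leq C(\Rc)\hbar^{-3},
\end{equation*}
which is the claim. The hard part is the trace-norm estimate: the compact support of $\ham{V}$ and the ellipticity of $\ham{D}_{0,\theta_0}-z$ on $\Omega$ must be exploited carefully in order to absorb the matrix-valued, nonself-adjoint character introduced by the complex distortion.
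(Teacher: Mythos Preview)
The paper does not prove this theorem; it cites Khochman \cite{khochman07} (following N\'ed\'elec and Sj\"ostrand) for the proof. Your overall strategy---reducing to a Fredholm determinant and applying Jensen's formula---is precisely the Sj\"ostrand method those references use, and the structure of your argument (upper bound via \eqref{jkmm2:ope-eq3}, lower bound at a reference point $z_0$, Jensen) is correct.

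There is, however, a genuine gap: the operator $K(z,\hbar)=(\ham{D}_{0,\theta_0}-z)^{-1}\ham{V}$ is \emph{not} trace class, so $g(z,\hbar)=\det(\mathbf{1}+K(z,\hbar))$ is not defined. The Dirac operator is first order, hence its resolvent has symbol in $\Ssf(\langle\xi\rangle^{-1})$; the product $\mbf{q}\#\ham{V}$ is compactly supported in $x$ but only decays like $\langle\xi\rangle^{-1}$ in $\xi$, and $\int_{\R^3}\langle\xi\rangle^{-1}\,d\xi$ diverges. Your sentence ``the symbol $\mbf{q}\#\ham{V}$ is confined to a bounded region of $\Tsf^*\R^3$'' is where the argument breaks. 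Equivalently, Weyl asymptotics give $\mu_j(K)\le C(1+\hbar j^{1/3})^{-1}$, which is not summable.

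The standard remedy---and exactly what the paper does in the parallel proof of Proposition~\ref{jkmm2:aux-prop1}---is to pass to a sufficiently high power of $K$. Since $\mathbf{1}-(-K)^4=(\mathbf{1}+K)(\mathbf{1}-K+K^2-K^3)$, every $z$ at which $\mathbf{1}+K(z)$ fails to be invertible is a zero of $f(z):=\det(\mathbf{1}-(-K(z))^4)$, and now $\|K^4\|_{\Bc_1}\le\sum_j C(1+\hbar j^{1/3})^{-4}=\Oc(\hbar^{-3})$ is finite. With $f$ in place of $g$ the rest of your argument goes through: \eqref{jkmm2:ope-eq3} gives $|f(z)|\le e^{C\hbar^{-3}}$ on $\Omega$, choosing $z_0$ with $\im z_0$ a fixed positive constant (so that $\|K(z_0)\|$ is small, or invoking Proposition~\ref{jkmm2:cscaled-resolv-upper}) yields $|f(z_0)|\ge e^{-C\hbar^{-3}}$, and Jensen's formula gives the $\Oc(\hbar^{-3})$ bound on the number of zeros.
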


We will need the following important \textit{a priori} resolvent estimate for $\ham{D}_{\th}(\hbar)$ away from the critical set, which is useful for 
applying the semiclassical maximum principle (see, e.g., \cite{tangzwor98} or \cite[Corollary 1]{stefanov05}). Due to lack of space, we omit 
its lengthy proof (which is based on ideas from Sj\"{o}strand and Zworski \cite{sjozwor91} and Sj\"{o}strand \cite{sjo97b}). 

\begin{proposition}
Let Assumption~\ref{jkmm2:perturd-assumV} hold. Let $\Rc$ be a complex rectangle satisfying Assumption $(\textbf{A}_{\Rc }^+)$ and 
assume $g:(0,\hbar _0] \to \R _+$ is $o(1)$. Then there are constants $A=A(\Rc )>0$ and $\hbar _1 \in (0,\hbar _0)$ such that
\begin{align}
\| (\ham{D}_{\th} (\hbar) - z)^{-1}\| \le Ae^{A\hbar ^{-3}\log \frac{1}{g(\hbar )}} \quad \text{for all } 
        z\in \Rc \setminus \bigcup _{z_j \in \Res (\ham{D}(\hbar)) \cap \Rc } D(z_j, g(\hbar )),
        \label{jkmm2:aux-prop2-eq1}
\end{align}
for all $0<\hbar \le \hbar _1$.
\label{jkmm2:aux-prop2}
\end{proposition}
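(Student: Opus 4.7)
The plan is to follow the Sjöstrand--Zworski/Nedelec/Khochman scheme: construct a scalar holomorphic function on $\Rc$ whose zeros coincide with the resonances (with multiplicity), bound it above via a trace-norm estimate, bound it below via a minimum modulus theorem, and read off the resolvent estimate. First I would factor
\[
\ham{D}_\th(\hbar) - z = \bigl(\ham{D}_{0,\th}(\hbar) - z\bigr)\bigl(\mbf{1} + \mbf{K}(z,\hbar)\bigr),\qquad \mbf{K}(z,\hbar) := \bigl(\ham{D}_{0,\th}(\hbar) - z\bigr)^{-1}\ham{V}(\f_\th(\cdot)).
\]
By Assumptions~\ref{jkmm2:perturd-assumV} and~\ref{jkmm2:cscaling-gfunc}, $\ham{V}\circ \f_\th$ is smooth and compactly supported inside $B(0,R_0')$, so after squaring the free deformed resolvent (via $(\ham{D}_{0,\th}^2 - z^2)^{-1}$, reducing to Schrödinger-type kernel estimates) and iterating, one shows that $\mbf{K}(z,\hbar)^{p}$ belongs to $\Bc_1(\Hc)$ for some fixed integer $p$, with
\[
\|\mbf{K}(z,\hbar)^p\|_{\Bc_1} \le C\hbar^{-3}
\]
uniformly for $z \in \Rc$, via a semiclassical Weyl-type count of the singular values.

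Next, I would introduce the regularized Fredholm determinant $\Dc(z,\hbar) := \det\nolimits_p(\mbf{1} + \mbf{K}(z,\hbar))$, which is holomorphic in $z\in\Rc$ and whose zeros (counted with multiplicity) coincide with $\Res(\ham{D}(\hbar)) \cap \Rc$ in the sense of Definition~\ref{jkmm2:restate-def1}. The analog of \eqref{jkmm2:ope-eq3} for $\det\nolimits_p$ yields the upper bound $|\Dc(z,\hbar)| \le \exp(C\hbar^{-3})$. For a pointwise lower bound, I would pick a reference point $z_\star \in \Rc$ with $\im z_\star$ bounded away from zero: Proposition~\ref{jkmm2:cscaled-resolv-upper} then gives $\|(\ham{D}_\th(\hbar) - z_\star)^{-1}\| = \Oc(1)$, and combined with the standard inequality $\|(\mbf{1}+\mbf{K}(z,\hbar))^{-1}\| \le |\Dc(z,\hbar)|^{-1}\exp(C\hbar^{-3})$ this yields $|\Dc(z_\star,\hbar)| \ge \exp(-C\hbar^{-3})$.

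Finally, I would apply a Cartan/Jensen-type minimum modulus theorem (as in \cite[Section~5]{tangzwor98} or \cite[Corollary~1]{stefanov05}) to the holomorphic function $\Dc(\cdot,\hbar)$: knowing that $|\Dc|$ is squeezed between $e^{-C\hbar^{-3}}$ at $z_\star$ and $e^{C\hbar^{-3}}$ in sup norm on $\Rc$, and that $\Dc$ has at most $\Oc(\hbar^{-3})$ zeros in $\Rc$ (by Theorem~\ref{jkmm2:restate-thm1}), one obtains
\[
|\Dc(z,\hbar)| \ge \exp\bigl(-C\hbar^{-3}\log(1/g(\hbar))\bigr)
\]
for every $z \in \Rc\setminus \bigcup_{z_j\in \Res(\ham{D}(\hbar))\cap\Rc} D(z_j, g(\hbar))$. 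Inserting this into the estimate on $\|(\mbf{1}+\mbf{K}(z,\hbar))^{-1}\|$ and multiplying by the $\Oc(1)$ bound on $(\ham{D}_{0,\th}(\hbar)-z)^{-1}$ (valid since $\Rc$ is at positive distance from $\Sigma_{\th_0}$ by Assumption $(\textbf{A}_{\Rc}^+)$) yields \eqref{jkmm2:aux-prop2-eq1}. The hardest part will be the trace-norm bound $\|\mbf{K}^p\|_{\Bc_1}=\Oc(\hbar^{-3})$: the matrix structure of the Dirac operator and its two-sided essential spectrum make the kernel estimates more delicate than in the Schrödinger case, and the squaring trick together with a Helffer--Sjöstrand functional calculus must be deployed carefully to produce a genuinely trace-class remainder with the right semiclassical order.
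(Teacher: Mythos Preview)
The paper does not actually give a proof of this proposition; it explicitly omits it, stating only that the argument ``is based on ideas from Sj\"ostrand and Zworski \cite{sjozwor91} and Sj\"ostrand \cite{sjo97b}.'' Your outline follows precisely this Sj\"ostrand scheme, and it is also the same template the paper carries out in detail for the analogous CAP result (Proposition~\ref{jkmm2:aux-prop3}): a Fredholm determinant whose zeros are the resonances, an upper bound $e^{C\hbar^{-3}}$ from a trace-norm estimate, a lower bound at a reference point using Proposition~\ref{jkmm2:cscaled-resolv-upper}, then Jensen/Cartan to propagate the lower bound away from the zeros, and finally the Gohberg--Krein inequality \cite[Ch.~V, Theorem~5.1]{gohkre69} to convert the determinant bound into a resolvent bound. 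So your approach is the intended one.

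One small remark on the factorisation: writing $\ham{D}_\th - z = (\ham{D}_{0,\th} - z)(\mbf{1} + \mbf{K})$ with $\mbf{K} = (\ham{D}_{0,\th} - z)^{-1}\ham{V}$ is clean here because $\ham{V}$ has compact support and $\Rc$ avoids $\Sigma_{\th_0}$; this is simpler than the multi-cutoff parametrix the paper builds for $\ham{J}$ in Lemma~\ref{J-resolvent-meromorphic-delta-lemma}, and is closer to what Khochman \cite{khochman07} does directly for the Dirac case. Your identification of the trace-class estimate $\|\mbf{K}^p\|_{\Bc_1} = \Oc(\hbar^{-3})$ as the main technical hurdle is accurate, and the squaring trick you mention is indeed the standard way to handle it for Dirac.
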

\section{Main results}
Henceforth we always impose Assumption~\ref{jkmm2:perturd-assumV} and Assumption~\ref{jkmm2:capham-assumonw}. Moreover, 
$\ham{J}(\hbar)$ represents either $\ham{J}_{\infty}(\hbar)$ or $\ham{J}_{R}(\hbar)$. Throughout we shall assume that
$mc^2 < l_0 < r_0 < \infty $ (here $l_0$ and $r_0$ are independent of $\hbar $). 

\subsubsection{The case $R_{0}^{\prime} < R_{1}$}
\label{jkmm2:resultind1}
Bear in mind that $\supp \ham{W} \subset \R^{3} \setminus B(0,R_{1})$. We obtain the following result, which shows how 
a single resonance of $\ham{D}(\hbar )$ generates a single eigenvalue of $\ham{J}(\hbar )$ nearby, and vice versa. 

\begin{theorem} \hspace*{4cm} 
\newline
\noindent
1. Let $R_{0}^{\prime} < R_{1}$. Suppose $z_0(\hbar )$ is a resonance of $\ham{D}(\hbar)$ in 
\begin{equation*}
[l_0, r_0] + i\Big [- \frac{\hbar^{5}}{C\log \frac{1}{\hbar }}, 0 \Big ], \quad C \gg 1.
\label{jkmm2:boxinthm1}
\end{equation*}
Then there is an $\hbar_{0} \in {\rm (} 0,1 {\rm ]}$ such that, for $0 < \hbar \leq \hbar_{0}$, $\ham{J}(\hbar )$ has an eigenvalue in 
\begin{equation}
\big [\re z_0 (\hbar ) - \ve (\hbar ) \log \frac{1}{\hbar }, \re z_0 (\hbar ) + \ve (\hbar ) \log \frac{1}{\hbar } \big ] + i[-\ve (\hbar ), 0]
\label{jkmm2:narrowboxinthm1}
\end{equation}
where $\ve (\hbar ) = - \hbar ^{-5} \im z_0 (\hbar ) + \Oc (\hbar ^\infty )$.
\newline
\noindent
2. Let $R_{0}^{\prime} < R_{1}$. Suppose $w_0(\hbar )$ is an eigenvalue of $\ham{J}(\hbar)$ in 
\begin{equation*}
[l_0, r_0] + i\Big [- \Big (\frac{\hbar^{4}}{C\log \frac{1}{\hbar }} \Big )^2, 0 \Big ], \quad C \gg 1.
\end{equation*}
Then there is an $\hbar_{0} \in {\rm (} 0,1 {\rm ]}$ such that, for $0 < \hbar \leq \hbar_{0}$, $\ham{D}(\hbar )$ has a resonance in 
\eqref{jkmm2:narrowboxinthm1} with $\ve (\hbar ) =  \hbar ^{-4} \sqrt{-\im w_0 (\hbar )} + \Oc (\hbar ^\infty )$. 
\label{jkmm2:resultindthm1} 
\end{theorem}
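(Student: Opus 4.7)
The proof adapts to the Dirac operator the quasimode/a~priori resolvent scheme of Stefanov~\cite{stefanov05} and Kungsman--Melgaard~\cite{jkmm10}. Under the hypothesis $R_0'<R_1$ the supports of $\ham{V}$ and $\ham{W}$ are disjoint, so no Egorov-type propagation result is needed and one may work with purely local cutoffs.

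\emph{Part~1.} Let $\mbf{u}_\th \in \Lb^2(\R^3,\C^4)$ be a normalised resonant state, $\ham{D}_\th(\hbar)\mbf{u}_\th = z_0\mbf{u}_\th$. Since $\f_\th=\mathrm{id}$ on $B(0,R_0)$ and $R_0>R_2>R_1>R_0'$, I pick $\h\in\ccs(\R^3)$ with $\h=1$ on $B(0,R_0')$ and $\supp\h\subset B(0,R_1)$, so that $\supp\h\cap\supp\ham{W}=\emptyset$ and $\ham{D}_\th=\ham{D}$ on $\supp\h$. A short computation then gives
\begin{equation*}
(\ham{J}(\hbar)-z_0)\,\h\mbf{u}_\th \;=\; [\ham{D}(\hbar),\h]\,\mbf{u}_\th,
\end{equation*}
because $\h(\ham{D}-z_0)\mbf{u}_\th = \h(\ham{D}_\th-z_0)\mbf{u}_\th = 0$ and $\h\ham{W}=0$. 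The right-hand side is supported in the free annulus $\supp\nabla\h \subset B(0,R_1)\setminus B(0,R_0')$. The key quantitative ingredient is the \emph{leakage estimate}
\begin{equation*}
\|\mbf{u}_\th\|_{\Lb^2(\supp\nabla\h)} \;\le\; C\hbar^{-M}|\im z_0(\hbar)| + \Oc(\hbar^\infty),
\end{equation*}
which I would prove in two stages. First, taking imaginary parts of $\langle \ham{D}_\th\mbf{u}_\th,\mbf{u}_\th\rangle = z_0$ and using Lemma~\ref{jkmm2:cscaling-lem3} yields $\|\mbf{u}_\th\|_{\{|x|>R_0+\eta\}}^2 \le C|\im z_0|$, since the non-vanishing imaginary part of the distorted principal symbol acts as an effective dissipation outside the transition annulus. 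Second, a nested-cutoff bootstrap of the type used to derive \eqref{jkmm2:ellipticDtheta} in the proof of Proposition~\ref{jkmm2:cscaled-resolv-upper} propagates this smallness inward across the non-distorted, non-interacting annulus $\{R_0'<|x|<R_0+\eta\}$ at the cost of a polynomial factor $\hbar^{-M}$.

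The resulting quasimode satisfies $\|(\ham{J}-z_0)\h\mbf{u}_\th\| = \Oc(\hbar^{1-M}|\im z_0|+\hbar^\infty)$ and $\|\h\mbf{u}_\th\| = 1 - \Oc(\hbar^\infty)$. If $\ham{J}(\hbar)$ had no eigenvalue inside \eqref{jkmm2:narrowboxinthm1}, then $(\ham{J}(\hbar)-z)^{-1}$ would be holomorphic in a neighbourhood, and the semiclassical maximum principle applied in the horizontal strip $[l_0,r_0]+i[-\hbar^5/(C\log\hbar^{-1}),\hbar^K]$ — using the dissipative bound $\|(\ham{J}-z)^{-1}\|\le 1/\im z$ for $\im z>0$ on the top, and the analogue for $\ham{J}$ of Proposition~\ref{jkmm2:aux-prop2} (obtained by distorting $\ham{J}$ exactly as $\ham{D}$ was distorted, which is legitimate since $g=0$ on $\supp\ham{W}$ admissibly) on the bottom — yields $\|(\ham{J}-z_0)^{-1}\| \le e^{C\hbar^{-3}\log\hbar^{-1}}$. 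This contradicts the quasimode inequality $1\le\|(\ham{J}-z_0)^{-1}\|\,\|(\ham{J}-z_0)\h\mbf{u}_\th\|$ precisely in the stated range of $z_0$, the exponent $\hbar^{-5}$ in \eqref{jkmm2:narrowboxinthm1} matching $M$ in the leakage estimate.

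\emph{Part~2.} Let $\mbf{v}$ be a normalised eigenfunction, $\ham{J}(\hbar)\mbf{v}=w_0\mbf{v}$. Taking imaginary parts of $\langle\ham{J}\mbf{v},\mbf{v}\rangle = w_0$ and invoking Assumption~\ref{jkmm2:capham-assumonw}(iii),(iv), with Cauchy--Schwarz to absorb $\im W$ through $|\im W|\le C\sqrt{\re W}$, yields $\|\mbf{v}\|_{\{|x|>R_2\}}^2 \le C|\im w_0|/\d_0$. The same nested-cutoff bootstrap as in Part~1 upgrades this to $\|\mbf{v}\|_{\{|x|>R_0'\}} \le C\hbar^{-M'}|\im w_0|^{1/2}$. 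Choosing $\h$ as in Part~1, the function $\h\mbf{v}$ is a quasimode for $\ham{D}_\th$ at $w_0$: indeed $\ham{D}_\th(\h\mbf{v})=\ham{D}(\h\mbf{v})$ since $\h\mbf{v}$ is supported where the two operators agree, and
\begin{equation*}
(\ham{D}_\th - w_0)\h\mbf{v} \;=\; \h(\ham{D}-w_0)\mbf{v} + [\ham{D},\h]\mbf{v} \;=\; i\h\ham{W}\mbf{v} + [\ham{D},\h]\mbf{v} \;=\; [\ham{D},\h]\mbf{v},
\end{equation*}
of norm $\Oc(\hbar^{1-M'}|\im w_0|^{1/2})+\Oc(\hbar^\infty)$. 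Applying Proposition~\ref{jkmm2:aux-prop2} in a disk around $w_0$ of radius $g(\hbar)=\ve(\hbar)=\hbar^{-4}\sqrt{-\im w_0}$ and arguing by contradiction produces a resonance inside \eqref{jkmm2:narrowboxinthm1}, with the factor $\hbar^{-4}$ again matching the exponent in the leakage estimate.

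The principal obstacle is the \emph{leakage estimate}: quantifying, in terms of $\im z_0$ (resp.\ $\im w_0$) and an explicit power of $\hbar$, the $\Lb^2$-mass of the resonant state (resp.\ the CAP eigenfunction) in the buffer annulus where neither the complex distortion nor the absorbing potential acts. The bootstrap across this annulus, combined with the sharp constant in the dissipative identity, is what ultimately determines the exponents $\hbar^{-5}$ and $\hbar^{-4}$ in \eqref{jkmm2:narrowboxinthm1}; everything else is standard semiclassical manipulation.
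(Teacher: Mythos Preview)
Your overall architecture (cut off the resonant/CAP eigenstate, obtain a quasimode, then invoke the quasimode--to--spectrum machinery of Section~\ref{jkmm2:quasi}) matches the paper, but there are two genuine gaps and a misattribution of the exponents.

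\textbf{Part 1.} The leakage estimate is simpler than you suggest and the factor $\hbar^{-5}$ does not come from it. The commutator $[\ham{D}_\th,\mbf{\h}]\mbf{u}_\th$ is supported in an annulus where $\ham{D}_\th-z_0=\ham{D}_{0,\th}-z_0$, and since $z_0\notin\Sigma_\th$ the global inverse $(\ham{D}_{0,\th}-z_0)^{-1}$ is bounded. The nested--cutoff bootstrap of \eqref{jkmm2:ellipticDtheta} therefore \emph{gains} a factor $\hbar$ at each step, yielding $\|[\ham{D}_\th,\mbf{\h}]\mbf{u}_\th\|=\Oc(\hbar^\infty)\|\mbf{u}_\th\|$ directly; your two--stage argument and the term $C\hbar^{-M}|\im z_0|$ are unnecessary. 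The quasimode error is thus $\rho(\hbar)=-\im z_0+\Oc(\hbar^\infty)$, and the $\hbar^{-5}$ in the conclusion comes entirely from Corollary~\ref{jkmm2:quasi-corol1}, whose exponent is dictated by the $\Oc(\hbar^{-4})$ eigenvalue count for $\ham{J}$ (Proposition~\ref{jkmm2:aux-prop1}) and the corresponding resolvent bound (Proposition~\ref{jkmm2:aux-prop3}). Your proposal to obtain that bound by distorting $\ham{J}$ fails: $\supp\ham{W}$ contains all of $\{|x|>R_2\}$, so no admissible $g$ vanishes on it; the paper instead uses the parametrix construction of Lemma~\ref{J-resolvent-meromorphic-delta-lemma} and Proposition~\ref{jkmm2:aux-prop3}.

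\textbf{Part 2.} Here the gap is fatal as written. With your cutoff ($\supp\h\subset B(0,R_1)$) the commutator $[\ham{D},\h]\mbf{v}$ lives in the annulus $\{R_0'<|x|<R_1\}$ where both $\ham{V}$ and $\ham{W}$ vanish. On that set the eigenfunction equation reduces to $(\ham{D}_0-w_0)\mbf{v}=0$ locally, and $\ham{D}_0-w_0$ is \emph{not} invertible (its spectrum is $(-\infty,-mc^2]\cup[mc^2,\infty)$ and $\re w_0>mc^2$), so the bootstrap you invoke cannot run. The dissipative identity only controls $\|\mbf{v}\|$ where $\re W>0$, and there is no mechanism to propagate smallness across the dead zone $\{R_0'<|x|<R_1\}$. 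The paper's remedy is to choose $\h$ with $\h=1$ near $B(0,R_2)$ and $\supp\h\subset B(0,R_0)$, so that $\supp\nabla\h\subset\{|x|>R_2\}$ where $\re W\ge\d_0$; then $\|\sqrt{\re W}\mbf{f}\|^2=-\im w_0\|\mbf{f}\|^2$ bounds both $\|[\ham{D},\mbf{\h}]\mbf{f}\|$ and $\|\mbf{\h}\ham{W}\mbf{f}\|$ by $C\sqrt{-\im w_0}$ directly, with no bootstrap. The $\hbar^{-4}$ in the conclusion again comes from Theorem~\ref{jkmm2:quasi-thm1} (driven by the $\Oc(\hbar^{-3})$ resonance count), not from any leakage exponent.
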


\subsubsection{The case $R_{1} < R_{0}^{\prime}$}
As the following theorem shows we only worsen the error by at most a factor $\hbar ^{-1}$ if we allow the 
supports of $\ham{V}$ and $\ham{W}$ to intersect. To establish it we need to impose both the nontrapping assumption
and the hyperbolicity condition.

\begin{theorem} Let $R_{1} < R_{0}^{\prime}$.  Suppose that $\ham{D}(\hbar)$ is nontrapping for $|x|>R_1$ on the interval $J=[l_{0},r_{0}]$; in 
the sense of Definition~\ref{jkmm2:hflow-def}. Moreover, let Assumption~\ref{jkmm2:assum-hyp} be satisfied and suppose 
$z_0(\hbar )$ is a resonance of $\ham{D}(\hbar )$ in 
\begin{equation*}
[l_0, r_0] + i\Big [- \frac{\hbar^{6}}{C\log \frac{1}{\hbar }}, 0 \Big ], \quad C \gg 1.
\end{equation*}
Then there is an $\hbar_{0} \in {\rm (} 0,1 {\rm ]}$ such that, for $0 < \hbar \leq \hbar_{0}$, $\ham{J}(\hbar )$ has an eigenvalue in \eqref{jkmm2:narrowboxinthm1} with
\begin{equation*}
\ve(\hbar ) = -\hbar ^{-6} \im z_{0}(\hbar ) + \Oc (\hbar ^\infty ).
\end{equation*}
\label{jkmm2:resultindthm2}
\end{theorem}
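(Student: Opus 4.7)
The plan is to carry over the quasimode construction of Theorem~\ref{jkmm2:resultindthm1}(1) to the intersecting regime, where the new obstruction is that the standard cutoff error now involves the CAP on part of $\supp \ham{V}$. Let $\mbf{u}$ be a normalized resonant state, $(\ham{D}_{\th _0}-z_0)\mbf{u}=0$, and pick $\mbf{\chi}=\chi \mbf{I}_4$ with $\chi\in \ccs(\R^3)$ equal to $1$ on $B(0,R_0)$ and supported just outside it, so that $\mbf{\chi}\mbf{u}$ solves the undistorted Dirac equation on $\{\chi=1\}$. Setting $\mbf{\p}:=\mbf{\chi}\mbf{u}/\|\mbf{\chi}\mbf{u}\|$, one computes
\begin{equation*}
(\ham{J}(\hbar)-z_0)\mbf{\p} = \frac{1}{\|\mbf{\chi}\mbf{u}\|}\bigl([\ham{D},\mbf{\chi}]\mbf{u} -i\ham{W}\mbf{\chi}\mbf{u}\bigr).
\end{equation*}
In the non-intersecting case of Theorem~\ref{jkmm2:resultindthm1} the second term vanishes and $\supp\nabla\chi$ lies already in the distortion region; here both terms are genuinely present on $\supp\ham{V}\cap\supp\ham{W}$ and must each be shown to be $\Oc(\ve(\hbar))$.

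To do this I propagate microlocal smallness of $\mbf{u}$ from the distortion region back into the overlap, using the matrix-valued Egorov theorem (Lemma~\ref{jkmm2:prfind3-lem1}) in its full form, available under Assumption~\ref{jkmm2:assum-hyp}. First, since $|\im z_0|\le \hbar^6/(C\log(1/\hbar))$, the coercive bound embodied in Proposition~\ref{jkmm2:cscaled-resolv-upper} (applied to a cutoff of $\mbf{u}$ outside $B(0,R_0+\eta)$) shows that $\mbf{u}$ is microlocally $\Oc(|\im z_0|+\hbar^\infty)$ on $\R^3\setminus B(0,R_0+\eta)$. Next, the nontrapping assumption on $[l_0,r_0]$ guarantees that every trajectory $\F_j^t$ originating over $\supp\ham{W}$ on an energy shell $\l_j(x,\x)=\re z_0$ leaves $B(0,R_0+\eta)$ in time $T=\Oc(1)$. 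The hyperbolicity assumption ensures that the $\l_j$ are uniformly separated in $\x$, which allows one to block-diagonalize the principal symbol $\mbf{d}_0$ to all orders in $\hbar$ by a pseudodifferential conjugation; each block then evolves under its own scalar Hamiltonian flow $\F_j^t$ together with a matrix parallel transport on the polarization degrees of freedom. Transporting the ``outside'' bound back through time $T$ via the Heisenberg equation and invoking Lemma~\ref{small_support} should then give
\begin{equation*}
\|[\ham{D},\mbf{\chi}]\mbf{u}\|+\|\ham{W}\mbf{\chi}\mbf{u}\| = \Oc\bigl(\hbar^{-6}|\im z_0|+\hbar^\infty\bigr),
\end{equation*}
one factor $\hbar^{-1}$ worse than in Theorem~\ref{jkmm2:resultindthm1} because of the extra commutator arising in the Heisenberg picture.

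To conclude, with $\|\mbf{\chi}\mbf{u}\|\gtrsim 1$ (the undistorted resonant state cannot concentrate in the tail where $\chi=0$), I obtain a quasimode with residual $\Oc(\ve(\hbar))$. Applying the semiclassical maximum principle on a thin rectangle centered at $z_0$ of size $\ve(\hbar)\log(1/\hbar)\times\ve(\hbar)$, together with the resolvent bound of Proposition~\ref{jkmm2:aux-prop2}, transferred from $\ham{D}_\th(\hbar)$ to $\ham{J}(\hbar)$ by a standard perturbative argument since $-i\ham{W}-(\ham{D}_\th-\ham{D})$ is supported outside $B(0,R_1)$, forces $\ham{J}(\hbar)$ to have an eigenvalue inside \eqref{jkmm2:narrowboxinthm1}. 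I expect the matrix-valued Egorov step to be the main obstacle: tracking both the classical orbit and the polarization of $\mbf{u}$ across the overlap region depends critically on Assumption~\ref{jkmm2:assum-hyp}, since eigenvalue crossings would destroy the block-diagonal reduction and force a substantially weaker bound, precisely the issue that necessitated an extra restrictive hypothesis in the Schr\"odinger analogue \cite{jkmm10}.
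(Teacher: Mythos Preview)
Your strategy---cutoff resonant state, propagate smallness into $\supp\ham{W}$ via the matrix Egorov theorem under Assumption~\ref{jkmm2:assum-hyp} and the nontrapping hypothesis, then invoke the quasimode-to-eigenvalue machinery---is exactly the paper's. Three points need correcting.

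First, the quasimode residual is $\Oc(\hbar^{-1}|\im z_0|+\hbar^\infty)$, not $\Oc(\hbar^{-6}|\im z_0|+\hbar^\infty)$. The propagation step (Lemma~\ref{jkmm2:prfind3-lem3}) costs exactly one power of $\hbar^{-1}$ from the Heisenberg commutator, as you yourself note; the remaining $\hbar^{-5}$ enters only when you pass from the quasimode to an eigenvalue via Corollary~\ref{jkmm2:quasi-corol1}. Your displayed bound and the sentence following it are inconsistent.

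Second, the paper takes $\chi\in\ccs(B(0,R_1'))$ with $R_0'<R_1'<R_0$, so that $\mbf{v}=\mbf{\chi u}$ is supported strictly inside the undistorted region. This has two advantages over your cutoff at $R_0$: on $\supp\chi$ one has $\ham{D}_\theta=\ham{D}$ exactly, so no distortion remainders appear; and the commutator $[\ham{D},\mbf{\chi}]$ lives outside $\supp\ham{V}$, hence is already $\Oc(\hbar^\infty)$ by the iterated elliptic estimate \eqref{jkmm2:ellipticDtheta} and needs no propagation. The only term requiring Egorov is $\ham{W}\mbf{v}$, and the initial data for the transport is simply $\mbf{v}\equiv 0$ outside $B(0,R_1')$, not a smallness estimate on $\mbf{u}$.

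Third, and most important: you cannot ``transfer'' the resolvent bound of Proposition~\ref{jkmm2:aux-prop2} to $\ham{J}(\hbar)$ by perturbation. The difference $-i\ham{W}-(\ham{D}_\theta-\ham{D})$ is $\Oc(1)$, not small, so a resolvent identity gives nothing. The paper instead establishes the analogous bound for $\ham{J}$ independently (Proposition~\ref{jkmm2:aux-prop3}, built on the parametrix of Lemma~\ref{J-resolvent-meromorphic-delta-lemma} and the eigenvalue count of Proposition~\ref{jkmm2:aux-prop1}), and this is what feeds into Corollary~\ref{jkmm2:quasi-corol1}. You should simply invoke Corollary~\ref{jkmm2:quasi-corol1} with the $\hbar^{-1}$-worse quasimode residual; the $\hbar^{-5}$ loss there produces the final $\ve(\hbar)=-\hbar^{-6}\im z_0+\Oc(\hbar^\infty)$.
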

\section{Properties of CAP Hamiltonians}
\label{jkmm2:aux}
Herein we study the spectral properties of the CAP Hamiltonians. We give an estimate of the number of eigenvalues of 
$\ham{J}(\hbar)$ on a rectangle. The result is an analogue of the estimate in Theorem~\ref{jkmm2:restate-thm1} for 
$\ham{D}(\hbar)$, however this time for the number of eigenvalues of 
$\ham{J}(\hbar)$ rather than the resonances of $\ham{D}(\hbar)$. Our approach is inspired by Stefanov \cite{stefanov05}.

Since the following result is independent of $\hbar $ we need not indicate that we have a family of $\hbar $-dependent operators. 
\begin{lemma}
The resolvent $(\ham{J} - z)^{-1}$ exists as a meromorphic operator in $\im z > -\d _0$ with the poles being the eigenvalues of finite multiplicity. 
\label{J-resolvent-meromorphic-delta-lemma}
\end{lemma}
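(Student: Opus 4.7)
The plan is to apply the analytic Fredholm theorem after writing $\ham{J}-z$ as an invertible operator composed with a compact perturbation of the identity. I will focus on the case $\ham{J}=\ham{J}_{\infty}$; for $\ham{J}_R=\ham{D}_R-i\ham{W}$ on the bounded ball $B(0,R)$, the Dirichlet realization $\ham{D}_R$ is a first-order elliptic operator on a bounded domain and so has compact resolvent, whence $\ham{J}_R$ does as well, and the conclusion holds on all of $\C$ by standard spectral theory for a bounded perturbation of an operator with compact resolvent.

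First I introduce a reference operator $L$ whose resolvent exists on the whole half-plane $\{\im z>-\d_0\}$. Pick a cutoff $\chi\in\ccs(\R^{3},[0,1])$ with $\chi=1$ on a neighborhood of $\overline{B(0,R_2)}$ and set $W_1:=W+\d_0\chi$, $L:=\ham{D}-iW_1\mbf{I}_4$. Assumption~\ref{jkmm2:capham-assumonw}(i),(iii) then gives $\re W_1\ge\d_0$ on all of $\R^{3}$. Since $W_1$ is bounded, $L$ is closed with $\dom L=\dom\ham{D}=\Hb^{1}(\R^{3},\C^{4})$, and for $\mbf{u}\in\dom L$ and $\im z>-\d_0$ the elementary estimate
\begin{equation*}
\im\langle(L-z)\mbf{u},\mbf{u}\rangle=-\langle(\re W_1)\mbf{u},\mbf{u}\rangle-(\im z)\|\mbf{u}\|^{2}\le-(\d_0+\im z)\|\mbf{u}\|^{2}
\end{equation*}
together with the usual numerical-range argument yields invertibility of $L-z$ with $\|(L-z)^{-1}\|\le(\d_0+\im z)^{-1}$ throughout this half-plane. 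Note that the imaginary part $\im W$ plays no role here and is controlled separately via Assumption~\ref{jkmm2:capham-assumonw}(iv) as needed for closedness.

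Since $\ham{J}-z=L-z+i\d_0\chi\mbf{I}_4$, I factor
\begin{equation*}
\ham{J}-z=(\mbf{1}+K(z))(L-z),\qquad K(z):=i\d_0\chi(L-z)^{-1}.
\end{equation*}
The crucial point is that $K(z)$ is compact: $(L-z)^{-1}$ is continuous from $\Hc$ to $\Hb^{1}(\R^{3},\C^{4})$ by the closed-graph theorem (using $\dom L=\Hb^{1}$), and multiplication by the compactly supported $\chi$ sends $\Hb^{1}(\R^{3},\C^{4})$ into the subspace of $\Hb^{1}$-functions supported in $\supp\chi$, which embeds compactly into $\Hc$ by the Rellich--Kondrachov theorem. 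Holomorphy of $z\mapsto K(z)$ on $\{\im z>-\d_0\}$ as a $\bocc{\Hc}$-valued map is immediate from holomorphy of $z\mapsto(L-z)^{-1}$.

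Finally I invoke analytic Fredholm. For $\im z$ large the bound above gives $\|K(z)\|<1$, so $\mbf{1}+K(z)$ is invertible there by a Neumann series; the analytic Fredholm theorem then delivers meromorphy of $(\mbf{1}+K(z))^{-1}$ on all of $\{\im z>-\d_0\}$ with finite-rank poles, and hence of
\begin{equation*}
(\ham{J}-z)^{-1}=(L-z)^{-1}(\mbf{1}+K(z))^{-1}.
\end{equation*}
At any pole $z_0$ a nonzero $\mbf{v}\in\ker(\mbf{1}+K(z_0))$ produces $\mbf{u}:=(L-z_0)^{-1}\mbf{v}\ne 0$ with $(\ham{J}-z_0)\mbf{u}=0$, exhibiting $z_0$ as an eigenvalue of $\ham{J}$ of finite multiplicity (since $\ker(\mbf{1}+K(z_0))$ is finite-dimensional by Fredholm theory). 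I expect the only delicate point to be the compactness of $K(z)$: it is essential to place the cutoff $\chi$ on the \emph{output} side of the resolvent (hence the ordering $(\mbf{1}+K)(L-z)$ rather than $(L-z)(\mbf{1}+K')$) so that Rellich applies directly to $\chi\cdot(L-z)^{-1}\mbf{u}$ without needing any exponential-decay estimate on the kernel of $(L-z)^{-1}$.
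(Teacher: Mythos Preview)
Your proof is correct. Both you and the paper apply the analytic Fredholm theorem after reducing to a compact perturbation of the identity, and both obtain compactness via Rellich--Kondrachov. The difference lies in the choice of reference operator: you use a single globally dissipative operator $L=\ham{D}-iW_1\mbf{I}_4$, obtained by filling in the absorbing potential on the ball $B(0,R_2)$, and then write $\ham{J}-z=(\mbf{1}+K(z))(L-z)$ with the single compact term $K(z)=i\d_0\chi(L-z)^{-1}$. The paper instead builds a three-piece parametrix $\mbf{E}(z,z_0)$ from a partition of unity $\h_1+\h_2+\h_3=1$, using the resolvents of $\ham{D}$, of $\ham{D}_0-i\ham{W}$, and of $\ham{D}_0-i\ham{W}_1$ on the respective regions, which yields $(\ham{J}-z)\mbf{E}(z,z_0)=\mbf{1}+\mbf{K}(z,z_0)$ with a five-term compact remainder. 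Your argument is shorter and fully adequate for the lemma as stated. The paper's more elaborate decomposition is not needed here, but it pays off immediately afterwards in Proposition~\ref{jkmm2:aux-prop1}, where the explicit structure of the five remainder terms (in particular that $\mbf{K}_2$ and $\mbf{K}_4$ carry a factor $(z_0-z)$ and can be compared to a Dirac operator on a torus) is exploited to get the singular-value bounds that feed into the $\Oc(\hbar^{-4})$ eigenvalue count. Your operator $K(z)$ does not separate these pieces, so if you proceed to the counting estimate you would have to redo that analysis.

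Two minor remarks. Your aside that $\im W$ ``is controlled separately via Assumption~\ref{jkmm2:capham-assumonw}(iv) as needed for closedness'' is unnecessary: $W_1\in\Lb^\infty$ already makes $L$ closed on $\dom\ham{D}$, and your numerical-range inequality only involves $\re W_1$. And your closing comment about the placement of $\chi$ is a little overstated, since $(L-z)^{-1}\chi$ is also compact (take adjoints); but the argument you actually give is the cleanest one.
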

\begin{proof}
Let $\h _1 + \h _2 + \h _3 = 1$ be a smooth partition of unity with $\h _1 = 1$ near $B(0,R_0')$ and supported in $B(0,\tfrac{R_0' + R_1}{2})$, $\h _2$ compactly supported and $\h _3$ supported in $|x|>R_2$. Let $\widetilde{\h }_j \succ \h _j$ have the same support properties. The fact that $\{\widetilde{\h }_j \} $ is not a partition of unity does not matter. Define $W_1$ to equal $\d _0$ for $|x|< R_2$ and $W$ otherwise. For $\im z_0 > 0$ fixed (see below) the operator
$$
\mbf{E}(z,z_0) = \widetilde{\mbf{\h }}_1(\ham{D}-z_0)^{-1}\mbf{\h }_1 + \widetilde{\mbf{\h }}_2(\ham{D}_0-i\ham{W}-z_0)^{-1}\mbf{\h }_2 + \widetilde{\mbf{\h }}_3(\ham{D}_0-i\ham{W}_1-z)^{-1}\mbf{\h }_3
$$
depends analytically on $z$ in $\im z > - \d _0$. Moreover 
$$
(\ham{J}-z) \mbf{E}(z,z_0) = \mbf{1}+\mbf{K}(z,z_0)
$$
where 
\begin{align*}
\mbf{K}(z,z_0) &= [\ham{D}_0,\widetilde{\mbf{\h }}_1](\ham{D} - z_0)^{-1}\mbf{\h}_1 + (z_0-z)\widetilde{\mbf{\h }}_1(\ham{D} - z_0)^{-1}\mbf{\h}_1 \\
&+[\ham{D}_0,\widetilde{\mbf{\h }}_2](\ham{D}_0 - i\ham{W} - z_0)^{-1}\mbf{\h}_2 + (z_0-z)\widetilde{\mbf{\h }}_2(\ham{D}_0 - i\ham{W} - z_0)^{-1}\mbf{\h}_2 \\
&+[\ham{D}_0,\widetilde{\mbf{\h }}_3](\ham{D}_0 - i\ham{W}_1 - z)^{-1}\mbf{\h}_3 \\
&=:\mbf{K}_1(z_0) +  \mbf{K}_2(z,z_0) + \mbf{K}_3(z_0)  + \mbf{K}_4(z,z_0) + \mbf{K}_5(z).
\end{align*}
By construction $\mbf{K}(z,z_0)$ depends analytically on $z$ in $\im z > -\d _0$. Furthermore it follows by the Rellich-Kondrachov embedding theorem that $\mbf{K}(z,z_0)$ is a compact operator on $\Lb ^2(\R ^3,\C ^4)$. Since for $\im z_0>0$ sufficiently large we have $\|\mbf{K}(z,z_0) \|\le C\max \{|\im z|^{-1}, (\im z_0)^{-1} \}$ we see that for $z=z_0$ and $\im z_0$ large enough $\|\mbf{K}(z,z_0) \| \le 1/2$. By the analytic Fredholm theorem, for fixed $z_0$ as above, $(\mbf{1} + \mbf{K}(z,z_0) )^{-1}$ exists as a meromorphic operator in $\im z > -\d _0$. Similarly a left parametrix is constructed by interchanging $\mbf{\h }_j $ and $\widetilde{\mbf{\h }}_j$ for $j=1,2,3$. The left and right inverses will share the same poles and agree elsewhere and thus
\begin{align}
(\ham{J} - z)^{-1} = \mbf{E}(z,z_0)(\mbf{1} + \mbf{K}(z,z_0))^{-1}
\label{J-resolvent-with-K}
\end{align}
so that $(\ham{J} - z)^{-1}$ is meromorphic in $\im z > - \d _0$ with finite rank residues at the poles which are the eigenvalues. 
\end{proof}
\noindent
The following result and its proof is similar to \cite[Proposition 2]{stefanov05}. 
\begin{proposition}
Let Assumption \ref{jkmm2:perturd-assumV} and Assumption \ref{jkmm2:capham-assumonw} hold. If $\Rc $ satisfies Assumption $(\textbf{A}_{\Rc }^+)$ then the number of eigenvalues in $\Rc $ satisfies 
\begin{align}
\coun (\ham{J}(\hbar ), \Rc ) = \Oc (\hbar ^{-4}). 
\label{upper-bound-eigenvalues-J-prop}
\end{align}
\label{jkmm2:aux-prop1}
\end{proposition}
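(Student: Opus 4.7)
The plan is to count the eigenvalues of $\ham{J}(\hbar)$ in $\Rc$ as zeros of a Fredholm determinant and then apply Jensen's formula on a slightly enlarged rectangle. Starting from the representation \eqref{J-resolvent-with-K} established in Lemma~\ref{J-resolvent-meromorphic-delta-lemma},
\[
(\ham{J}(\hbar)-z)^{-1} = \mbf{E}(z,z_0)\,(\mbf{1}+\mbf{K}(z,z_0))^{-1},
\]
with $\mbf{E}(z,z_0)$ holomorphic on $\{\im z > -\d _0\}$, the eigenvalues of $\ham{J}(\hbar)$ in $\Rc$ (counted with multiplicity) coincide with the zeros of
\[
F(z) := \det(\mbf{1}+\mbf{K}(z,z_0))
\]
in $\Rc$, once the determinant is shown to be well-defined and holomorphic.

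The crucial step is the trace-class estimate
\[
\| \mbf{K}(z,z_0) \|_{\Bc _1} = \Oc(\hbar^{-4}),
\]
uniformly for $z$ in a slightly enlarged rectangle $\widetilde{\Rc}\supset \Rc$. Each summand of $\mbf{K}$ has one of the two forms
\[
[\ham{D}_0,\widetilde{\mbf{\h}}_j]\,\mbf{R}\,\mbf{\h}_j \quad \text{or} \quad (z_0-z)\,\widetilde{\mbf{\h}}_j\,\mbf{R}\,\mbf{\h}_j,
\]
where $\mbf{R}$ is one of the three resolvents entering $\mbf{E}(z,z_0)$; the commutator contributes a factor $\hbar$ and is supported in the annulus $\{ \nabla \widetilde{\mbf{\h}}_j \ne 0 \}$. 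For $j=1,\ldots ,4$ all relevant cutoffs are compactly supported, so I would realize $\mbf{R}$ via Lemma~\ref{jkmm2:pseudo-leminv} as an $\hbar$-pseudodifferential operator with symbol in $\Ssf(\langle \x \rangle ^{-1})$ modulo an $\Oc(\hbar^\infty )$ smoothing remainder, and then use the semiclassical Weyl law on $\R^3$ together with the Ky Fan inequalities \eqref{singular-values-compact-sum}--\eqref{singular-values-compact-bounded-2} to obtain the trace-class bound. The tail term $\mbf{K}_5$, in which $\mbf{\h}_3$ is not compactly supported, is handled through property (iii) of Assumption~\ref{jkmm2:capham-assumonw}: since $\re(-i\ham{W}_1)\ge \d _0$ globally, $\ham{D}_0-i\ham{W}_1-z$ is globally elliptic with uniformly bounded inverse, and combined with the compact support of $[\ham{D}_0,\widetilde{\mbf{\h}}_3]$ the same trace-class bound follows up to an $\Oc(\hbar^\infty )$ remainder.

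Armed with the trace-class bound, \eqref{jkmm2:ope-eq3} yields the upper bound $|F(z)| \le e^{C\hbar^{-4}}$ on $\widetilde{\Rc}$. For a matching lower bound at a point $z^* \in \widetilde{\Rc} \cap \C _+$ one uses $\C _+ \subset \rho(\ham{J}(\hbar ))$ (noted just before Section~\ref{jkmm2:cscaling}) to obtain $\|(\mbf{1}+\mbf{K}(z^*,z_0))^{-1}\| = \Oc(1)$, whence $|F(z^*)| \ge e^{-C\hbar^{-4}}$ through the standard relation between a trace-class determinant and its inverse. Jensen's formula applied to the holomorphic function $F$ on a disk containing $\Rc$ inside $\widetilde{\Rc}$ then gives \eqref{upper-bound-eigenvalues-J-prop}.

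The main obstacle is the trace-class estimate with the sharp exponent $\hbar^{-4}$. The matrix-valued setting itself is not problematic (Proposition~\ref{jkmm2:pseudo-cvprop} and Lemma~\ref{jkmm2:pseudo-leminv} apply verbatim), but one must carefully combine the semiclassical pseudodifferential calculus with a Weyl count in the six-dimensional phase space $\Tsf^{\ast }\R^3$, and control the non-compactly-supported cutoff $\mbf{\h}_3$ through the globally absorbing character of $W_1$.
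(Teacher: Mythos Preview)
There is a genuine gap: the trace-class estimate $\|\mbf{K}(z,z_0)\|_{\Bc_1}=\Oc(\hbar^{-4})$ is false, so $F(z)=\det(\mbf{1}+\mbf{K}(z,z_0))$ is not even defined. The Dirac resolvent gains only one power of $\langle\x\rangle^{-1}$, and in $\R^{3}$ that is not enough: a compactly localized operator of the form $\widetilde{\mbf{\h}}_j(\ham{D}-z_0)^{-1}\mbf{\h}_j$ (or its analogues with $\ham{D}_0-i\ham{W}$) has singular values obeying $\m_j\le C/(1+\hbar j^{1/3})$, which is \emph{not} summable. Hence none of $\mbf{K}_1,\ldots,\mbf{K}_5$ lies in $\Bc_1$, and no amount of pseudodifferential parametrix construction for a first-order resolvent will change this. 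The same objection applies to your treatment of $\mbf{K}_5$: the compact support of the commutator and the global ellipticity of $\ham{D}_0-i\ham{W}_1-z$ give boundedness (in fact $\|\mbf{K}_5\|=\Oc(\hbar)$), not membership in $\Bc_1$.

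The paper repairs exactly this point. First it factors
\[
\mbf{1}+\mbf{K}=(\mbf{1}+\widetilde{\mbf{K}})(\mbf{1}+\mbf{K}_1+\mbf{K}_3+\mbf{K}_5),
\qquad
\widetilde{\mbf{K}}=(\mbf{K}_2+\mbf{K}_4)(\mbf{1}+\mbf{K}_1+\mbf{K}_3+\mbf{K}_5)^{-1},
\]
using that the commutator pieces $\mbf{K}_1,\mbf{K}_3,\mbf{K}_5$ have operator norm $\Oc(\hbar)$, so the second factor is invertible for small $\hbar$ and contributes no zeros. Then, since $\widetilde{\mbf{K}}$ is still not trace class, it passes to the \emph{fourth power} and works with $f(z)=\det(\mbf{1}-\widetilde{\mbf{K}}^4)$, whose zeros contain those of $\mbf{1}+\widetilde{\mbf{K}}$. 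The singular-value bound $\m_j(\mbf{K}_2),\m_j(\mbf{K}_4)\le C/(1+\hbar j^{1/3})$ is obtained by a torus comparison (transplant the problem to $(\R/\widetilde{R}\Z)^3$ and use the Weyl law for $\ham{D}_{\mathbb T}$), and then Ky~Fan gives $\m_j(\widetilde{\mbf{K}}^4)\le C/(1+\hbar j^{1/3})^4$, which \emph{is} summable with $\sum_j\m_j(\widetilde{\mbf{K}}^4)\le C\hbar^{-4}$. From $|f(z)|\le e^{C\hbar^{-4}}$ and the convenient normalization $f(z_0)=1$ (since $\mbf{K}_2(z_0,z_0)=\mbf{K}_4(z_0,z_0)=0$), Jensen's formula gives the count. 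The missing ingredient in your argument is precisely this power trick (or, equivalently, the use of a regularized determinant $\det_p$ with $p\ge 4$); without it the determinant you wrote down does not exist.
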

\begin{proof}
In addition to the requirements imposed in the proof of Lemma \ref{J-resolvent-meromorphic-delta-lemma}, assume $z_0$ is such that we can find $r_0, \ve _0>0$ so that $\Rc \subset D(z_0,r_0) \subset D(z_0,r_0+\ve _0) \subset \{\im z > -\d_0 \}$. By \eqref{J-resolvent-with-K} it suffices to estimate the number of points $z$ in $D(z_0,r_0)$ where $\mbf{1}+\mbf{K}(z,z_0)$ is not invertible. Since $\|\mbf{K}_5(z) \|=\Oc (\hbar )$ we may write 
\begin{align*}
\mbf{1} + \mbf{K}(z,z_0) &= \big (\mbf{1} + \widetilde{\mbf{K}}(z)\big )  (\mbf{1} + \mbf{K}_1(z_0) + \mbf{K}_3(z_0) + \mbf{K}_5(z)  ) \\
 \widetilde{\mbf{K}}(z,z_0) &:= (\mbf{K}_2(z,z_0) + \mbf{K}_4(z,z_0)) \big (\mbf{1} + \mbf{K}_1(z_0) + \mbf{K}_3(z_0) + \mbf{K}_5(z) \big )^{-1}
\end{align*}
provided $\hbar $ is small enough. Thus $\mbf{1}+\mbf{K}(z,z_0)$ is not invertible if and only if $\mbf{1} + \widetilde{\mbf{K}}(z,z_0)$ is not invertible. Now,  since $\mbf{1} + \widetilde{\mbf{K}}(z,z_0)$ need not belong to $\Bc _1$ (see below) and since the singular points of $\mbf{1} + \widetilde{\mbf{K}}(z,z_0)$ are included among those of $\mbf{1} - \widetilde{\mbf{K}}^4(z,z_0)$, we are going to estimate the number of zeros of 
$$
f(z) := \det (\mbf{1} - \widetilde{\mbf{K}}^4(z,z_0)).
$$
By \eqref{jkmm2:ope-eq3}, \eqref{singular-values-compact-product}, \eqref{singular-values-compact-bounded-1} and \eqref{singular-values-compact-sum} it suffices to obtain upper bounds of $\m _j (\mbf{K}_2)$ and $\m _j (\mbf{K}_4)$. To this end, let $ \widetilde{R} > R_0'+R_1$ and consider the flat torus $\mathbb{T} := (\R / \widetilde{R}\Z )^3$ obtained by identifying opposite faces of the cube $\{x\in \R ^3 : |x_j|< \widetilde{R},\, j=1,2,3 \}$. We assume $\mathbb{T}$ carries the metric induced by the Euclidean metric on $\R ^3$ and trivial spin structure. Denote by $\ham{D}_{0,\mathbb{T}}$ the corresponding free semiclassical Dirac operator on $\mathbb{T}$. Then, viewing $B(0,\tfrac{R_0'+R_1}{2})$ as a subset of $\mathbb{T}$, $\ham{D}_{\mathbb{T}} := \ham{D}_{0,\mathbb{T}} + \ham{V}(x)$ coincides with $\ham{D}$ near $B(0,\tfrac{R_0'+R_1}{2})$. It is well-known that $\ham{D}_{0,\mathbb{T}}$ satisfies the Weyl law (in fact this follows from the Weyl law for $\D _{\mathbb{T}}$ in view of the Schr\"odinger-Lichnerowicz formula)
\begin{align}
\coun (\ham{D}_{0,\mathbb{T}}, [-\l , \l ]) = \Oc \Big ( \frac{\l ^3 }{\hbar ^3} \Big ),
\label{weyl-law-dirac-torus}
\end{align}
and since $\ham{V}$ is a bounded multiplication operator the Weyl asymptotics remain true also for $\ham{D}_{\mathbb{T}}$.
Denote by $\l _1 \le \l _2 \le \cdots $ the eigenvalues of $\ham{D}_{0,\mathbb{T}}$. Then \eqref{weyl-law-dirac-torus} implies  
$$
\m _j \big ((\ham{D}_{\mathbb{T}} - i)^{-1} \big) = |i-\l _j|^{-1} \le \frac{C}{1+\hbar j^{1/3}},
$$   
and by the resolvent equation the same estimate holds for $\m _j \big ((\ham{D}_{\mathbb{T}} - z_0)^{-1} \big)$. From the identity
$$
(\ham{D} - z_0)^{-1} \mbf{\h }_1 = \widetilde{\mbf{\h }}_1(\ham{D}_{\mathbb{T}} - z_0)^{-1}\mbf{\h }_1 -  (\ham{D} - z_0)^{-1}[\ham{D}, \widetilde{\mbf{\h }}_1] (\ham{D}_{\mathbb{T}} - z_0)^{-1}\mbf{\h }_1
$$
we now obtain 
$$
\m _j (\mbf{K}_2) \le \frac{C}{1+\hbar j^{1/3}}.
$$
By taking a possibly larger torus we see that 
$$
\widetilde{\mbf{\h }}_2(\ham{D}_0 - i\ham{W} - z_0)^{-1}\mbf{\h}_2 = (\ham{D}_{0,\mathbb{T}} - i)^{-1}(\ham{D}_{0,\mathbb{T}} - i)\widetilde{\mbf{\h }}_2(\ham{D}_0 - i\ham{W} - z_0)^{-1}\mbf{\h}_2
$$
where $(\ham{D}_{0,\mathbb{T}} - i)\widetilde{\mbf{\h }}_2(\ham{D}_0 - i\ham{W} - z_0)^{-1}\mbf{\h}_2$ is bounded. Thus, by \eqref{weyl-law-dirac-torus}, also 
$$ 
\m _j (\mbf{K}_4) \le \frac{C}{1+\hbar j^{1/3}}. 
$$
It follows that 
$$
\sum _j \m _j (\widetilde{\mbf{K}}^4) \le \sum _j \frac{C}{(1+\hbar j^{1/3})^4} \le \sum _j \frac{C}{1+\hbar ^4 j^{4/3}}\le C\hbar ^{-4},
$$
and from \eqref{jkmm2:ope-eq3} we obtain $|f(z)|\le e^{C\hbar ^{-4}}$ for $z\in D(z_0,r_0 + \ve _0)$. Thus, since $f(z_0)=1$, an application of Jensen's formula relative to $D(z_0,r_0 + \ve _0)$ and $D(z_0,r_0)$ gives \eqref{upper-bound-eigenvalues-J-prop}.  
\end{proof}
Finally we establish an a priori resolvent estimate for the complex scaled CAP Hamiltonian $\ham{J}_\th $, which takes into account the distance to its eigenvalues $w_{j}$;  this is the analogue of Proposition~\ref{jkmm2:aux-prop2} above. 
 
\begin{proposition}
Let Assumption~\ref{jkmm2:perturd-assumV} and Assumption~\ref{jkmm2:capham-assumonw} hold. Let $\Rc $ be a complex rectangle 
satisfying Assumption $(\textbf{A}_{\Rc }^+)$ and assume $g : (0,\hbar _0] \to \R _+$ is $o(1)$. Then there are constants 
$A=A(\Rc )>0$ and $\hbar _1 \in (0,\hbar _0)$ such that
\begin{equation*}
\| (\ham{J}(\hbar)-z)^{-1} \| \leq A e^{A \hbar^{-4} \log \frac{1}{g(\hbar)} } , \quad z \in \Rc  \setminus \bigcup_{w_{j}(\hbar ) \in \spec(\ham{J}_{\th }(\hbar)) \cap \Rc^{\prime}}  D(w_{j}(\hbar ), g(\hbar) ),
\end{equation*}
where $\Rc \subsetneq \Rc^{\prime}$.
\label{jkmm2:aux-prop3}
\end{proposition}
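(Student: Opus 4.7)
The plan is to follow the same scheme as in the proof of Proposition~\ref{jkmm2:aux-prop2} for the complex distorted Dirac operator $\ham{D}_\th(\hbar)$, with the $\Oc(\hbar^{-3})$ resonance count of Theorem~\ref{jkmm2:restate-thm1} replaced throughout by the $\Oc(\hbar^{-4})$ eigenvalue count of Proposition~\ref{jkmm2:aux-prop1}. This single substitution accounts for the change from $\hbar^{-3}$ in~\eqref{jkmm2:aux-prop2-eq1} to $\hbar^{-4}$ in the present statement. The analytic engine is the semiclassical maximum principle of Tang--Zworski \cite{tangzwor98} applied to a scalar determinantal function on a rectangle slightly larger than $\Rc$.

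Concretely, I would first fix auxiliary rectangles $\Rc \subsetneq \Rc' \subsetneq \Rc''$ all satisfying $(\textbf{A}_{\Rc}^+)$ and all contained in a domain where the resolvent is meromorphic; the required parametrix construction mimics that of Lemma~\ref{J-resolvent-meromorphic-delta-lemma} carried out now on the complex distorted side, yielding a factorization $(\ham{J}_\th - z)^{-1} = \mbf{E}(z,z_0)(\mbf{1} + \mbf{K}(z,z_0))^{-1}$ with $\mbf{E}$ analytic and bounded on $\Rc''$. Next, I would form the scalar holomorphic function $f(z) := \det(\mbf{1} - \widetilde{\mbf{K}}^{4}(z,z_0))$ exactly as in the proof of Proposition~\ref{jkmm2:aux-prop1}: the zeros of $f$ in $\Rc''$ include all eigenvalues $w_j(\hbar)$ of $\ham{J}_\th$ with their multiplicities, and the same singular value computation (Ky Fan's inequalities together with the Weyl asymptotics $\m_j \le C/(1+\hbar j^{1/3})$ on a reference torus) delivers the global bound $|f(z)| \le e^{C\hbar^{-4}}$ on $\Rc''$. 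For the normalization, I would choose $z_0 \in \Rc''$ with $\im z_0$ of order one; by property (i) of Assumption~\ref{jkmm2:capham-assumonw} combined with an analogue of Proposition~\ref{jkmm2:cscaled-resolv-upper} for $\ham{J}_\th$, this gives $\|(\ham{J}_\th - z_0)^{-1}\| = \Oc(1)$, hence $|f(z_0)| \ge c > 0$.

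The exponential resolvent bound then follows from Jensen's formula on the nested pair $\Rc' \subset \Rc''$: on $\Rc \setminus \bigcup_{j} D(w_{j}(\hbar), g(\hbar))$ one obtains the lower bound $|f(z)| \ge e^{-C\hbar^{-4}}\, g(\hbar)^{N(\hbar)}$ with $N(\hbar) = \coun(\ham{J}_\th, \Rc') = \Oc(\hbar^{-4})$ by Proposition~\ref{jkmm2:aux-prop1}, and inserting this into the Cramer-type expression for $(\mbf{1} + \widetilde{\mbf{K}}^{4})^{-1}$ that arises after peeling off $(\mbf{1} + \mbf{K}_1(z_0) + \mbf{K}_3(z_0) + \mbf{K}_5(z))^{-1}$ produces the claimed bound $A e^{A\hbar^{-4}\log \frac{1}{g(\hbar)}}$. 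The main obstacle, and the point requiring genuine work beyond what is already in Proposition~\ref{jkmm2:aux-prop1}, is the $\Oc(1)$ resolvent estimate at the reference point $z_0$ for the \emph{complex distorted and complex absorbed} operator: the perturbation $-i\ham{W}(\f_\th(x))$ is not purely dissipative once $\th$ becomes non-real, but property (iv) of Assumption~\ref{jkmm2:capham-assumonw} controls its imaginary part by $\Oc(\sqrt{\re W})$, so that the commutator bootstrap of the proof of Proposition~\ref{jkmm2:cscaled-resolv-upper} (relying on Lemma~\ref{jkmm2:cscaling-lem3}) can be repeated with only cosmetic modifications, the absorbing term being of lower order compared to the $(\mbf{D}_\th - z)$ part on $\{\im z > 0\}$.
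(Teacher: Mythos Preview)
Your core method --- forming $f(z)=\det(\mbf{1}-\widetilde{\mbf{K}}^{4})$, bounding it above by $e^{C\hbar^{-4}}$ via the trace-norm computation of Proposition~\ref{jkmm2:aux-prop1}, bounding it below by factoring out zeros and using the $\Oc(\hbar^{-4})$ eigenvalue count, then feeding this into a Gohberg--Krein/Cramer inequality for $(\mbf{1}-\widetilde{\mbf{K}}^{4})^{-1}$ --- is exactly the paper's approach. Two points deserve comment.

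First, the paper does \emph{not} pass to the complex distorted side. Despite the phrase ``complex scaled CAP Hamiltonian $\ham{J}_\th$'' in the text preceding the proposition (and the stray $\ham{J}_\th$ in the index set of the statement), the proof works directly with $\ham{J}=\ham{J}_\infty$ and reuses the parametrix of Lemma~\ref{J-resolvent-meromorphic-delta-lemma} verbatim; the only application of the result, in Corollary~\ref{jkmm2:quasi-corol1}, is also for $\ham{J}$. Consequently your ``main obstacle'' --- the $\Oc(1)$ resolvent bound at the reference point for the distorted-plus-absorbed operator, with the complication that $-i\ham{W}(\f_\th(x))$ is no longer dissipative --- never arises. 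In the paper's construction $\mbf{K}_2$ and $\mbf{K}_4$ carry explicit factors of $(z_0-z)$, so $\widetilde{\mbf{K}}(z_0,z_0)=0$ and the normalization $f(z_0)=1$ is trivial.

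Second, your opening sentence names the ``semiclassical maximum principle of Tang--Zworski'' as the analytic engine, but neither your subsequent argument nor the paper's proof uses it; the engine is the Gohberg--Krein determinantal inequality $\|(\mbf{1}+\widetilde{\mbf{K}}^{4})^{-1}\|\le \det(\mbf{1}+|\widetilde{\mbf{K}}^{4}|)/|\det(\mbf{1}+\widetilde{\mbf{K}}^{4})|$ (see \cite[Ch.~V, Theorem~5.1]{gohkre69}) combined with Sj\"ostrand's lower bound for the determinant away from its zeros. The semiclassical maximum principle enters the paper only later, in Theorem~\ref{jkmm2:quasi-thm1} and Corollary~\ref{jkmm2:quasi-corol1}, where the present proposition is one of the inputs.
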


The following proof is partially sketchy to avoid repeating arguments.

\begin{proof}
In this proof, once again, we suppress the subscript in $\ham{J}_{\infty}(\hbar)$ and its dependence on $\hbar$. Using the notation from Lemma \ref{J-resolvent-meromorphic-delta-lemma} and Proposition \ref{upper-bound-eigenvalues-J-prop} we have
$$
(\ham{J} - z)^{-1} = \mbf{E}(\mbf{1} + \mbf{K}_1 + \mbf{K}_3 + \mbf{K}_5)^{-1} (\mbf{1} - \widetilde{\mbf{K}} + \widetilde{\mbf{K}}^2 - \widetilde{\mbf{K}}^3)(\mbf{1} - \widetilde{\mbf{K}}^4 )^{-1}
$$ 
so it suffices to estimate $(\mbf{1} - \widetilde{\mbf{K}}^4 )^{-1}$ away from the set of eigenvalues of $\ham{J}$. To this end we have (see \cite[Ch. V, Theorem 5.1]{gohkre69})
\begin{equation*}
  \| (\mbf{1} + \widetilde{\mbf{K}}^4(z))^{-1} \| \leq \frac{\det (\mbf{1} + |\widetilde{\mbf{K}}^4(z)|)}{|\det (\mbf{1} + \widetilde{\mbf{K}}^4(z))|}.
\label{jkmm2:aux-prop3-eq3}
\end{equation*}  
For the numerator we have as before $\det (\mbf{1} + |\widetilde{\mbf{K}}^4(z)|) \leq e^{\| \widetilde{\lmbf{K}}(z) \|_{\Bc_{1}}} \leq e^{C\hbar ^{-4 }}$. The denominator can be treated as in \cite[Section 8]{sjo97b}, 
i.e. by first factoring out its zeros and then use the upper bound for the eigenvalue counting function to obtain 
\begin{equation*}
  |\det (\mbf{1} + \widetilde{\mbf{K}}_0(z))| \geq Ce^{C\hbar ^{-4} \log \frac{1}{g}} \quad \text{for }\dist (z, \spec(\ham{J}_{\th}) \cap \Rc )\geq g(\hbar ).
\label{jkmm2:aux-prop3-eq4}
\end{equation*}
Putting these facts together gives the assertion. 
\end{proof}

\begin{remark}
The results above, established for $\ham{J}_{\infty}(\hbar)$ and its resolvent, can easily be carried over to the CAP Hamiltonian 
$\ham{J}_{R}(\hbar)$ and its resolvent. 
\label{jkmm2:aux-remark2}
\end{remark} 
\section{Quasimodes and resonances}
\label{jkmm2:quasi} 
In this section we present the main result that enables us to relate so-called \textit{quasimodes} of $\ham{D}(\hbar)$ with resonances of $\ham{D}(\hbar)$. 
It informs us that if we have a set of linearly independent quasimodes, which can be thought of as square integrable approximate 
resonant states, for energies in a real interval $I$ and if this set remains linearly independent under small perturbations 
(in the semiclassical sense), then there are as many resonances as there are quasimodes and these are located with real parts near $I$ 
and having small imaginary parts. Such a result was first established by Tang and Zworski \cite{tangzwor98} for Schr\"{o}dinger operators.  
We give a version which is valid for the perturbed Dirac operator.
Our proof is adopted from Stefanov \cite{stefanov99} who even managed to treat higher multiplicities and clusters of resonances in the case 
when quasimodes are very close to each other. He showed that such clusters of quasimodes generate (asymptotically) at least the same 
number of resonances. In \cite{stefanov05} he improved the latter result in several ways by modifying the reasoning in \cite[Theorem 1]{stefanov99}. 
The underlying ideas, however, are the same as in Tang and Zworski  \cite{tangzwor98}  (see also \cite[Theorem 11.2]{sjo02}).

Let $\h ,\widetilde{\h } \in \ccs (\R^3)$ with $\mathbf{1}_{B(0,R)} \prec \h \prec \widetilde{\h }$ and let $z_0\in \Res(\ham{D}(\hbar))$. Then, for $z$ in 
a neighborhood of $z_0$ we have, with $N$ finite, 
\begin{equation}
\mbf{\h} (\ham{D}_{\th} - z)^{-1} \widetilde{\mbf{\h} } = \mbf{A}_0 (z, \hbar ) + \sum _{j=1}^N (z - z_0 (\hbar ))^{-j}\mbf{A} _j(\hbar )
\label{jkmm2:quasi-eq1}
\end{equation}
for some operator $\mbf{A}_0(z, \hbar )$, holomorphic in $z$ near $z_0(\hbar )$, and finite rank operators $\mbf{A}_j$, $1 \le j \le N$, independent of $z$.      

\begin{lemma}
Let $\h \in \ccs (\R ^3)$ with $\h = 1$ on $B(0,R)$ for some $R>0$. Then, for any $z_0(\hbar ) \in \Res (\ham{D}(\hbar ))$, we have 
\begin{align*}
\mbf{\h} (\ham{D}_{\th} (\hbar ) - z)^{-1} \mbf{\h} = \mbf{A}_0(z,\hbar )\mbf{\h} + \sum _{j=1}^N (z - z_0(\hbar ))^{-j} \mbf{A}_1(\hbar ) \mbf{Q}_j (\hbar )
\label{jkmm2:quasi-lem1-eq1}
\end{align*}
for some operators $\mbf{Q}_j$, holomorphic at $z_0(\hbar )$. 
\label{jkmm2:quasi-lem1}
\end{lemma}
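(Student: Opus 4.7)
The plan is to reduce the identity to the already-established Laurent expansion (6.1) by a support trick, and then to exploit the standard nilpotent structure of the pole of the resolvent to factor out $\mbf{A}_1$. First, since $\mbf{\h} = 1$ on $B(0,R)$, I would fix an auxiliary cutoff $\widetilde{\h} \in \ccs(\R^{3})$ with $\mbf{\h} \prec \widetilde{\mbf{\h}}$, so that (6.1) is available for the pair $(\mbf{\h}, \widetilde{\mbf{\h}})$. In particular, $\widetilde{\mbf{\h}}\mbf{\h} = \mbf{\h}$, so multiplying (6.1) on the right by $\mbf{\h}$ immediately yields
\begin{equation*}
\mbf{\h}(\ham{D}_{\th}-z)^{-1}\mbf{\h} \;=\; \mbf{A}_{0}(z,\hbar)\mbf{\h} \;+\; \sum_{j=1}^{N} (z-z_{0}(\hbar))^{-j}\, \mbf{A}_{j}(\hbar)\,\mbf{\h}.
\end{equation*}

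Next I would invoke standard Riesz projection theory: writing $\mbf{A}_{j} = \mbf{\h}\,\mbf{B}_{j}\,\widetilde{\mbf{\h}}$, where $\mbf{B}_{j}$ is the $j$-th Laurent coefficient of the full resolvent $(\ham{D}_{\th}-z)^{-1}$ at $z_{0}(\hbar)$, the identity $(\ham{D}_{\th}-z)(\ham{D}_{\th}-z)^{-1} = \mbf{1}$ combined with matching Laurent coefficients gives $\mbf{B}_{j+1} = (\ham{D}_{\th}-z_{0})\mbf{B}_{j}$ and, symmetrically, $\mbf{B}_{j+1}=\mbf{B}_{j}(\ham{D}_{\th}-z_{0})$, because $\mbf{B}_{1}$ is (up to sign) the Riesz projection onto the generalized eigenspace at $z_{0}$, which commutes with $\ham{D}_{\th}$. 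Iterating gives
\begin{equation*}
\mbf{B}_{j} \;=\; \mbf{B}_{1}\,(\ham{D}_{\th}-z_{0})^{j-1}, \qquad j = 1,\ldots,N.
\end{equation*}

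The key observation, which is the only delicate point, is that $\ham{D}_{\th}$ is a first-order differential operator by the representation in Lemma~\ref{jkmm2:cscaling-lem3}, hence \emph{local}. Consequently, for any $\mbf{u}\in \Lb^{2}(\R^{3},\C^{4})$ the function $(\ham{D}_{\th}-z_{0})^{j-1}\mbf{\h}\mbf{u}$ has support contained in $\supp\mbf{\h}$, and since $\widetilde{\mbf{\h}}=1$ on $\supp\mbf{\h}$, one obtains the operator identity
\begin{equation*}
\widetilde{\mbf{\h}}\,(\ham{D}_{\th}-z_{0})^{j-1}\mbf{\h} \;=\; (\ham{D}_{\th}-z_{0})^{j-1}\mbf{\h}.
\end{equation*}

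Finally I would define the (trivially $z$-independent, hence $z$-holomorphic) operators $\mbf{Q}_{j}(\hbar) := (\ham{D}_{\th}-z_{0}(\hbar))^{j-1}\mbf{\h}$, and verify
\begin{equation*}
\mbf{A}_{1}\mbf{Q}_{j} \;=\; \mbf{\h}\,\mbf{B}_{1}\widetilde{\mbf{\h}}\,(\ham{D}_{\th}-z_{0})^{j-1}\mbf{\h} \;=\; \mbf{\h}\,\mbf{B}_{1}(\ham{D}_{\th}-z_{0})^{j-1}\mbf{\h} \;=\; \mbf{\h}\,\mbf{B}_{j}\mbf{\h} \;=\; \mbf{A}_{j}\,\mbf{\h},
\end{equation*}
using the support identity and then $\widetilde{\mbf{\h}}\mbf{\h}=\mbf{\h}$ once more. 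Substituting this into the displayed expansion from the first paragraph gives the stated formula. The main obstacle is really just tracking locality of $\ham{D}_{\th}$ carefully—once the support identity is in hand, the rest is algebra of residues.
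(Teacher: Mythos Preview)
Your argument is correct and reaches the same conclusion, but by a genuinely different route from the paper. The paper never passes to the Laurent coefficients $\mbf{B}_{j}$ of the \emph{full} resolvent; instead it works entirely with the cutoff coefficients $\mbf{A}_{j}$ from (6.1). Multiplying (6.1) on the right by $\ham{D}_{\th}-z$, then by an intermediate cutoff $\mbf{\h}_{l}$, and using $[\widetilde{\mbf{\h}},\ham{D}_{\th}]\mbf{\h}_{l}=0$, the paper obtains the recursion $\mbf{A}_{j}(\ham{D}_{\th}-z_{0})\mbf{\h}_{l}=\mbf{A}_{j+1}\mbf{\h}_{l}$. To iterate this down to $\mbf{A}_{1}$ one must reinsert a slightly larger cutoff at each step, whence the nested chain $\mbf{\h}_{1}\prec\cdots\prec\mbf{\h}_{N}$ and the resulting $\mbf{Q}_{j}=(\ham{D}_{\th}-z_{0})\mbf{\h}_{j-1}\cdots\mbf{\h}_{2}(\ham{D}_{\th}-z_{0})\mbf{\h}_{1}$. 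Your approach sidesteps the chain entirely: by lifting to the uncutoff coefficients you can invoke the standard nilpotent structure $\mbf{B}_{j}=\mbf{B}_{1}(\ham{D}_{\th}-z_{0})^{j-1}$ in one stroke, and a single locality observation $\widetilde{\mbf{\h}}(\ham{D}_{\th}-z_{0})^{j-1}\mbf{\h}=(\ham{D}_{\th}-z_{0})^{j-1}\mbf{\h}$ then closes the argument with only one auxiliary cutoff. In fact the two $\mbf{Q}_{j}$'s coincide on smooth functions, since the intermediate $\mbf{\h}_{l}$'s in the paper's formula act as the identity on the relevant supports.

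One small inaccuracy worth tightening: your support identity and the relation $\mbf{B}_{j}=\mbf{B}_{1}(\ham{D}_{\th}-z_{0})^{j-1}$ hold a priori only on $\dom((\ham{D}_{\th})^{j-1})$, not ``for any $\mbf{u}\in\Lb^{2}$'' as stated. This is harmless---verify the operator identity on $C_{0}^{\infty}$ and note that the final objects $\mbf{A}_{j}\mbf{\h}$ are bounded---but the quantifier should be adjusted.
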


\begin{proof}
For notational reasons we denote $\h = \h _1$ and $\widetilde{\h } = \h _N$ and introduce the sequence of intermediate cut-off functions 
\begin{equation*}
\h _1 \prec \h _2 \prec \cdots \prec \h _N.
\label{jkmm2:quasi-lem1-eq2}
\end{equation*}
Multiply (\ref{jkmm2:quasi-eq1}) by $\ham{D}_{\th} - z$ from the right to get 
\begin{align*}
\mbf{\h}_1+ \mbf{\h}_1 & (\ham{D}_{\th} - z)^{-1} [\widetilde{\mbf{\h} }, \ham{D}_{\th}] = \mbf{A}_0 (z) (\ham{D}_{\th} - z) + 
\sum _{j=1}^N (z - z_0)^{-j} \mbf{A}_j (\ham{D}_{\th} - z) \\ 
&=\mbf{A}_0(z) (\ham{D}_{\th} - z) - \mbf{A}_1 + \sum _{j=1}^N (z - z_0)^{-j} \big ( \mbf{A}_j (\ham{D}_{\th} - z) - \mbf{A}_{j+1} \big )
\label{jkmm2:quasi-lem1-eq3}
\end{align*}
with the convention that $\mbf{A}_{N + 1} = 0$. Upon multiplying by $\mbf{\h}_l$ from the right and using $[\widetilde{\mbf{\h} }, \ham{D}_{\th}] \mbf{\h}_l = 0$ 
we realize that all singular terms on the right must vanish, that is
\begin{equation*}
\mbf{A}_j (\ham{D}_{\th} - z_0) \mbf{\h}_l = \mbf{A}_{j+1} \mbf{\h}_l, \quad  j,l = 1, \ldots ,N-1.
\label{jkmm2:quasi-lem1-eq4}
\end{equation*}
Using the latter identity repeatedly results in 
\begin{align*}
\mbf{A}_j \h _1 &= \mbf{A}_{j-1} (\ham{D}_{\th} - z_0) \mbf{\h}_1 = \mbf{A}_{j-1} \mbf{\h}_2( \ham{D}_{\th} - z_0) \mbf{\h}_1 \\
 &\phantom{a}\vdots \\
&= \mbf{A}_1( \ham{D}_{\th} - z_0) \mbf{\h}_{j-1} ( \ham{D}_{\th} - z_0) \mbf{\h}_{j-2} \cdots \mbf{\h}_2 ( \ham{D}_{\th} - z_0) \h _1.
\label{jkmm2:quasi-lem1-eq5}
\end{align*} 
By multiplying (\ref{jkmm2:quasi-eq1}) from the right by $\mbf{\h} $ and using the previous relation we obtain the lemma with 
\begin{equation*}
\mbf{Q}_j = (\ham{D}_{\th} - z_0) \mbf{\h}_{j-1} (\ham{D}_{\th} - z_0) \mbf{\h} _{j-2} \cdots \mbf{\h} _2 (\ham{D}_{\th} - z_0) \h _1. \qedhere  
\label{jkmm2:quasi-lem1-eq6}
\end{equation*}  
\end{proof}

We state and prove the main result of this section for positive energies. 

\begin{theorem}
Assume $mc^2 < l_0\le l(\hbar ) \le r(\hbar )\le r_0 <\infty $. Assume that for any $\hbar \in (0,\hbar _0]$ there is $m(\hbar ) \in \Z _+$, $E_j(\hbar ) \in [l(\hbar ), r(\hbar )]$ and normalized $\mbf{u}_j(\hbar ) \in \dom (\ham{D})$ (quasimodes) for $1\le j \le m(\hbar )$, having support in a ball $B(0,R)$ where $R<R_0$ does not depend on $\hbar $. Assume, moreover, that
\begin{align}
&\|( \ham{D} (\hbar ) - E_j(\hbar ))\mbf{u}_j(\hbar ) \| \leq \rho(\hbar ) 
\label{jkmm2:quasi-thm1-eq1}
\intertext{and}
&\text{all } \tilde{\mbf{u}}_j(\hbar ) \in \Hc \text{ such that } \|\tilde{\mbf{u}}_j(\hbar ) - \mbf{u}_j(\hbar ) \| \leq \frac{\hbar ^N}{M}, \quad 1\le j \le m(\hbar ), \nonumber \\
&\text{are linearly independent}, 
\label{jkmm2:quasi-thm1-eq2}
\end{align} 
where $\rho(\hbar ) \leq \hbar ^{4+N}/(C\log \hbar ^{-1})$, $C\gg 1$, $N\ge 0$ and $M>0$. Then there exists $C_0 = C_0(l_0,r_0)>0$ 
such that for any $B>0$ and $K\in \Z _+$ there is an 
$\hbar _1 = \hbar _1(A,B,M,N) \le \hbar _0$ such that for any $\hbar \in (0,\hbar _1]$ there will be at least $m(\hbar )$ resonances of 
$\ham{D}(\hbar)$ in 
\begin{align}
[l(\hbar ) - b(\hbar )\log \frac{1}{\hbar }, r(\hbar ) + b(\hbar )\log \frac{1}{\hbar }] + i[-b(\hbar ),0],
\label{jkmm2:quasi-thm1-eq3}
\end{align}
where 
\begin{equation*}
b(\hbar ) = \max {(C_0BM \rho(\hbar ) \hbar ^{-4-N}, e^{-B/\hbar }, \hbar ^K)}.
\label{jkmm2:quasi-thm1-eq4}
\end{equation*}
\label{jkmm2:quasi-thm1}
\end{theorem}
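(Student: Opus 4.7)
The plan is to argue by contradiction, following the Riesz-projection scheme of Tang-Zworski \cite{tangzwor98} adapted to the complex-distorted Dirac setting in the spirit of Stefanov \cite{stefanov05}. Assume $\ham{D}(\hbar)$ admits strictly fewer than $m(\hbar)$ resonances in the rectangle \eqref{jkmm2:quasi-thm1-eq3}, and aim to contradict the stability assumption \eqref{jkmm2:quasi-thm1-eq2}. Fix $\h\in C_{0}^{\infty}(B(0,R_{0});\R)$ with $\h=1$ on $\overline{B(0,R)}$ and put $\mbf{\h}:=\h\mbf{I}_{4}$, so that $\mbf{\h u}_{j}=\mbf{u}_{j}$ and $\ham{D}_{\theta}=\ham{D}$ on $\supp\mbf{\h}$. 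Setting $\mbf{r}_{j}:=(\ham{D}-E_{j})\mbf{u}_{j}$ gives $\|\mbf{r}_{j}\|\le\rho(\hbar)$ and $\mbf{\h r}_{j}=\mbf{r}_{j}$, and from $(\ham{D}_{\theta}-z)\mbf{u}_{j}=\mbf{r}_{j}-(z-E_{j})\mbf{u}_{j}$ one obtains the identity
$$(\ham{D}_{\theta}-z)^{-1}\mbf{u}_{j}=\frac{\mbf{u}_{j}-(\ham{D}_{\theta}-z)^{-1}\mbf{r}_{j}}{E_{j}-z}.$$

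Next choose $\Gamma$ as the boundary of a rectangle slightly larger than \eqref{jkmm2:quasi-thm1-eq3}. By Theorem \ref{jkmm2:restate-thm1} the resonances in a slightly enlarged $\widetilde{\Rc}\supset\Rc$ number at most $L=O(\hbar^{-3})$, so a pigeonhole argument in the $\im z$-direction allows $\Gamma$ to be placed at distance $\ge g(\hbar)$ from every one of them, with $g(\hbar)\asymp b(\hbar)\hbar^{3}$. The spectral projection $\mbf{\Pi}:=\frac{1}{2\pi i}\oint_{\Gamma}(\ham{D}_{\theta}-z)^{-1}\,dz$, as in Definition \ref{jkmm2:restate-def1}, has $\operatorname{rank}(\mbf{\Pi})=\coun(\ham{D}(\hbar),\Rc)<m(\hbar)$ by assumption. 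Integrating the identity around $\Gamma$, using $\frac{1}{2\pi i}\oint_{\Gamma}(E_{j}-z)^{-1}\,dz=-1$, and applying $\mbf{\h}$ on the left yields, with $\mbf{G}(z):=\mbf{\h}(\ham{D}_{\theta}-z)^{-1}\mbf{\h}$,
$$\mbf{u}_{j}-\tilde{\mbf{u}}_{j}=-\frac{1}{2\pi i}\oint_{\Gamma}\frac{\mbf{G}(z)\mbf{r}_{j}}{E_{j}-z}\,dz,\qquad \tilde{\mbf{u}}_{j}:=-\mbf{\h\Pi u}_{j}.$$
The vectors $\tilde{\mbf{u}}_{1},\ldots,\tilde{\mbf{u}}_{m(\hbar)}$ all lie in $\mbf{\h}\,\operatorname{Ran}(\mbf{\Pi})$, an at-most-$m'$-dimensional subspace with $m'<m(\hbar)$, hence are linearly dependent; so if one establishes $\|\mbf{u}_{j}-\tilde{\mbf{u}}_{j}\|<\hbar^{N}/M$ one contradicts \eqref{jkmm2:quasi-thm1-eq2}.

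The crux is therefore an $\hbar^{-4}$-polynomial bound on $\sup_{\Gamma}\|\mbf{G}(z)\|$. Proposition \ref{jkmm2:aux-prop2} only furnishes the exponential estimate $\|\mbf{G}(z)\|\le A\exp(A\hbar^{-3}\log g(\hbar)^{-1})$, which is far too weak. The essential ingredient is the semiclassical maximum principle (Tang-Zworski, cf.~\cite[Corollary 1]{stefanov05}): the polynomial $p(z):=\prod_{k}(z-z_{k})^{n_{k}}$ over the enclosed resonances (with multiplicities $n_{k}$ summing to $L$) renders $p(z)\mbf{G}(z)$ holomorphic throughout $\widetilde{\Rc}$, and interpolation between the global exponential bound and the polynomial $\|(\ham{D}_{\theta}-z)^{-1}\|\le C/\im z$ bound of Proposition \ref{jkmm2:cscaled-resolv-upper}, valid on the upper edge where $\im z$ is bounded below by a positive constant, delivers the required estimate. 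After dividing back by $|p(z)|\ge g(\hbar)^{L}$ along $\Gamma$ the exponents balance and one arrives at $\sup_{\Gamma}\|\mbf{G}(z)\|\lesssim\hbar^{-4}$. Inserting $|\Gamma|=O(b(\hbar)\log\hbar^{-1})$, $\min_{\Gamma}|z-E_{j}|\gtrsim b(\hbar)$, and $\|\mbf{r}_{j}\|\le\hbar^{4+N}/(C\log\hbar^{-1})$ bounds the contour integral by $\hbar^{N}/C<\hbar^{N}/M$ once $C\gg M$, delivering the desired contradiction. The principal obstacle is precisely this balancing act: each of the three cutoffs in $b(\hbar)=\max(C_{0}BM\rho(\hbar)\hbar^{-4-N},e^{-B/\hbar},\hbar^{K})$ plays a distinct role---the first matches the final contour estimate, the second keeps $\log g(\hbar)^{-1}=O(1/\hbar)$ so that the exponential factor does not overwhelm $p(z)$, and the third guarantees that Proposition \ref{jkmm2:cscaled-resolv-upper} applies on the top edge of $\Gamma$.
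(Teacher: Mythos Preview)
Your overall architecture (Riesz projection, contradiction via rank, contour representation of $\mbf{u}_j-\tilde{\mbf u}_j$) is in the Tang--Zworski spirit, but the step you flag as ``the crux'' contains a genuine gap. You claim that multiplying $\mbf G(z)=\mbf\h(\ham D_\theta-z)^{-1}\mbf\h$ by $p(z)=\prod_k(z-z_k)^{n_k}$, applying an interpolation/semiclassical-maximum-principle argument, and then dividing back by $|p(z)|\ge g(\hbar)^L$ leaves you with $\sup_\Gamma\|\mbf G\|\lesssim\hbar^{-4}$. This cannot work as stated: with $L=\Oc(\hbar^{-3})$ factors of size $\Oc(1)$, already on the top edge $|p(z)|$ is of order $e^{\Oc(\hbar^{-3})}$, so the hypothesis $\|\mbf F(z)\|\le C/\im z$ of Proposition~\ref{jkmm2:app-prop3} fails for $\mbf F=p\,\mbf G$; and in the division step $g(\hbar)^{-L}=e^{\Oc(\hbar^{-3}\log(1/g))}$ is exponentially large. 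No cancellation of exponents occurs, and you are left with the same exponential bound that Proposition~\ref{jkmm2:aux-prop2} gives directly, which is useless in your final contour estimate.

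The paper avoids this entirely by means of Lemma~\ref{jkmm2:quasi-lem1}: all Laurent coefficients of $\mbf\h(\ham D_\theta-z)^{-1}\mbf\h$ at a resonance factor as $\mbf A_1\mbf Q_j$, so their ranges lie in $\Ran\mbf A_1$. Taking $\mbf\Pi$ to be the \emph{orthogonal} projection onto the span of the ranges $\Ran\mbf A_1^{(k)}$ (not the Riesz projection you use), the function $\mbf F(z)=\mbf\Pi'\mbf\h(\ham D_\theta-z)^{-1}\mbf\h$ is genuinely holomorphic in the box $\Rc_2$ of \eqref{jkmm2:quasi-thm1-eq5}. No polynomial $p(z)$ is introduced, the top-edge bound $\|\mbf F\|\le C/S$ from Proposition~\ref{jkmm2:cscaled-resolv-upper} is truly polynomial, and Proposition~\ref{jkmm2:app-prop3} then yields $\|\mbf F(z)\|\le e^3/S$ on $[l,r]$. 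Evaluating at $z=E_j$ and using $\mbf\Pi'\mbf u_j=\mbf F(E_j)(\ham D-E_j)\mbf u_j$ gives $\|\mbf u_j-\mbf\Pi\mbf u_j\|\le e^3\rho(\hbar)/S\le\hbar^N/M$ directly, with no contour integration at all. In short, the missing ingredient in your argument is precisely Lemma~\ref{jkmm2:quasi-lem1}, which is what permits one to project out the singular part of the \emph{cut-off} resolvent and thereby preserve the polynomial top-edge bound needed for the semiclassical maximum principle.
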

We remark that Theorem~\ref{jkmm2:quasi-thm1} is stronger than what is needed for the present work where we only work with one quasimode at a time.  
\begin{proof}
Denote by $z_1, \ldots ,z_l$ all \emph{distinct} resonances in 
\begin{align}
\Rc_2 := [l-2w, r+2w] + i\Big [-2A\hbar ^{-3} (\log \frac{1}{S}) S, S \Big ].
\label{jkmm2:quasi-thm1-eq5}
\end{align}
where $S = \max (e^3 M \hbar ^{-N} \rho(\hbar ), e^{-2B/\hbar }, \hbar ^{K+4})$ for some $B>0$ (cf.\ Appendix~\ref{jkmm2:app-3}) and 
\begin{equation*}
w = 12 A \hbar ^{-3} (\log \frac{1}{\hbar }) (\log \frac{1}{S})S.
\label{jkmm2:quasi-thm1-eq6}
\end{equation*}
Clearly $S$ and $w$ satisfies (\ref{jkmm2:app-prop3-eq2}). It is easy to see that $\Rc_{2} \cap \C_{-}$ is contained in the box 
(\ref{jkmm2:quasi-thm1-eq3}) for $\hbar $ small enough so it suffices to show that there are at least $m$ resonances in $\Rc _2$.  
Fix $\h \in \ccs (\R ^3)$ with $\h \succ \mathbf{1}_{B(0, R)}$. Let $\mbf{\Pi} $ be the orthogonal projection onto $\cup _j \mbf{A}_{1}^{(j)}(\Hc )$ 
with $\mbf{A}_{1}^{(j)}$ being the residue at $z_{j}$, cf.~(\ref{jkmm2:quasi-eq1}),  and let $\mbf{\Pi}^{\prime} = \mbf{1} - \mbf{\Pi}$ be the 
complementary projection. In view of Lemma~\ref{jkmm2:quasi-lem1} $\mbf{F}(z) := \mbf{\Pi}^{\prime} \mbf{\h} (\ham{D}_{\th}(\hbar)-z)^{-1} \mbf{\h}$ is 
holomorphic in a neighborhood of $\Rc_2$. We are going to use this fact to show that the estimate in (\ref{jkmm2:aux-prop2-eq1}) holds in the 
whole of the smaller box 
\begin{equation*}
\Rc _1:= [l - w, r + w]+i\Big [-A\hbar ^{-3} (\log \frac{1}{S}) S, S \Big ] \subset \Rc_{2}
\label{jkmm2:quasi-thm1-eq7}
\end{equation*}
The bound $\| \mbf{F}(z)\| \leq C/ S$ (cf.\ Proposition~\ref{jkmm2:app-prop3}) for $\im z = S$ follows from Proposition \ref{jkmm2:cscaled-resolv-upper}.  
From Proposition~\ref{jkmm2:aux-prop2} with $g = S$ it follows that (\ref{jkmm2:aux-prop2-eq1}) is fulfilled for 
$z\in \Rc_2 \cap \{z:\dist (z, \Res (\ham{D})) \ge S \}$. Consider now the set obtained by adjoining to $\Rc _1$ the set of unions of disks $D(z_j, S)$ that 
have a point in common with $\Rc _1$ (see Figure \ref{quasi_to_res_fig1}).       
\begin{figure}[ht!]
\begin{center}
\def\svgwidth{0.8\columnwidth}
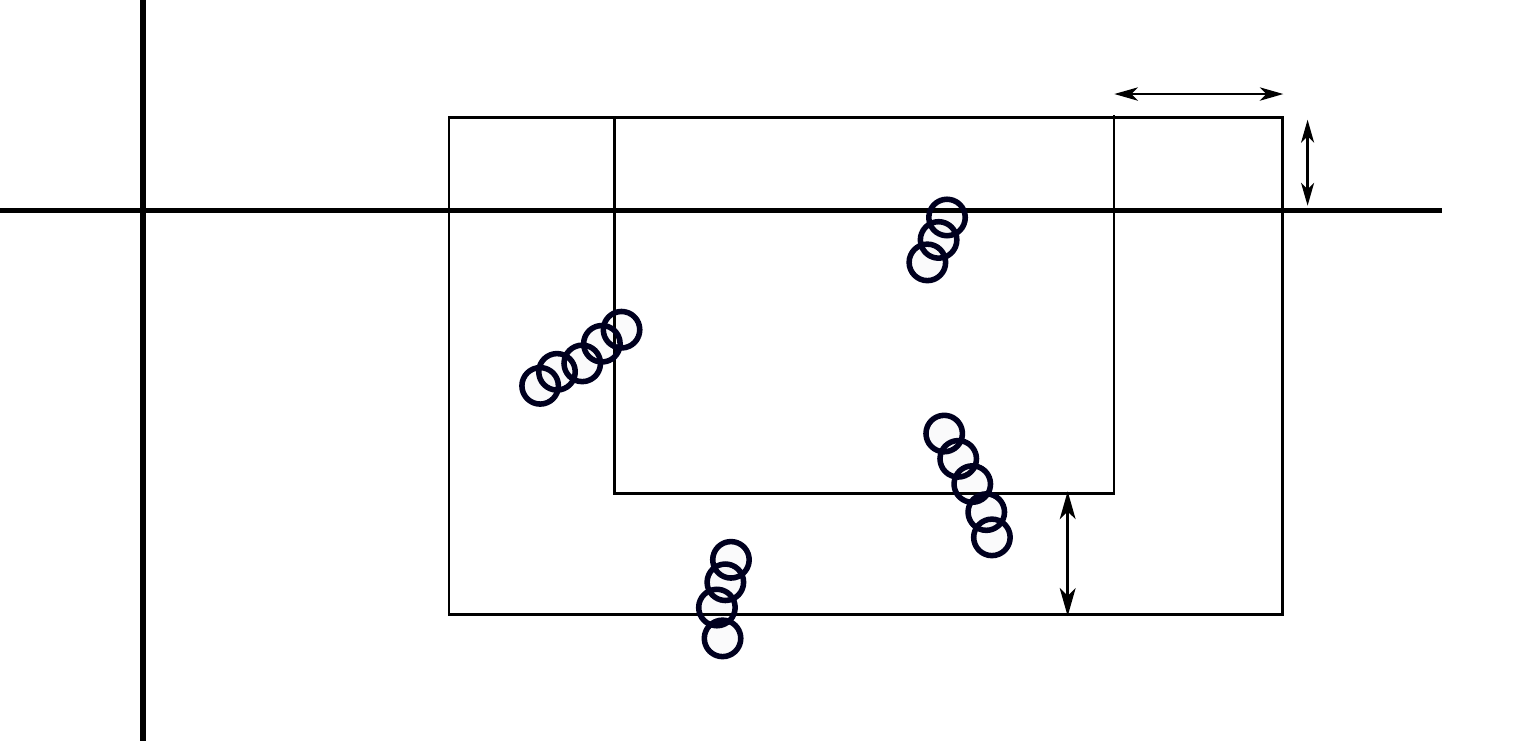
\end{center}
\caption{Connected unions of disks centered at resonances with radius $S$ that intersect with $\Rc _1$ never intersect the complement of $\Rc_2$.}
\label{quasi_to_res_fig1}
\end{figure}

If we can show that the set so obtained is contained in $\Rc_2$, provided $\hbar $ is small enough, where $\mbf{F}$ is holomorphic, then it would follow from the (classical) maximum principle that (\ref{jkmm2:aux-prop2-eq1}) holds in all of $\Rc _1$ since we know it holds on the boundary of the extended set. To this end, notice how it follows from Theorem~\ref{jkmm2:restate-thm1} that the diameter of any connected chain of disks centered at resonances having radii $S$ is $\Oc (\hbar ^{-3} S)$ while the shortest distance from $\Rc _1 \cap \C _-$ (to where the resonances are confined) to the complement of $\Rc_2$ is $A \hbar ^{-3} (\log S^{-1}) S$. Since the latter is greater than the former, provided $\hbar $ is sufficiently small, it follows that any such union of disks that intersect with $\Rc _1$ cannot intersect the complement of $\Rc_2$. Thus 
\begin{equation*}
\| \mbf{F}(z) \| \le A e^{A\hbar ^{-3} \log \tfrac{1}{S}} \quad \text{for all } z\in \Rc _1.
\label{jkmm2:quasi-thm1-eq8}
\end{equation*} 
We are now in a position to apply Proposition~\ref{jkmm2:app-prop3} so that, by letting $z\to E_j$,
\begin{align*}
\| \mbf{\Pi} \mbf{u}_j - \mbf{u}_j \| = \| \mbf{\Pi} '  \mbf{\h} \mbf{u}_j \| = \| \mbf{F}(z) (\ham{D} - z) \mbf{u}_j \| \leq \frac{e^3 \rho(\hbar)}{S}, 
\label{jkmm2:quasi-thm1-eq9}
\end{align*}
where we have also used the quasimode property (\ref{jkmm2:quasi-thm1-eq1}). It follows from our choice of $S$ and the assumption 
(\ref{jkmm2:quasi-thm1-eq2}) that $\{ \mbf{\Pi} \mbf{u}_j \}_{j=1}^m$ is linearly independent. Consequently,
\begin{equation*}
\coun(\ham{D}, \Rc_{2} ) = \sum _{j = 1}^l \rank \mbf{A}_{1}^{(j)} \ge \rank \mbf{\Pi} \ge m, 
\label{jkmm2:quasi-thm1-eq10}
\end{equation*} 
which concludes the proof. 
\end{proof}

With minor modifications Theorem~\ref{jkmm2:quasi-thm1} holds also with $\ham{D}_\th (\hbar)$ replaced by $\ham{J}(\hbar)$ (and resonances by eigenvalues). We re-phrase it for the precise statement.
\begin{corollary}
Assume $mc^2 < l_0\le l(\hbar ) \le r(\hbar )\le r_0 <\infty $. Assume that for any $\hbar \in (0,\hbar _0]$ there is $m(\hbar ) \in \Z _+$, $E_j(\hbar ) \in [l(\hbar ), r(\hbar )]$ and normalized $\mbf{u}_j(\hbar )$ (quasimodes) for $1\le j \le m(\hbar )$, having support in a ball $B(0,R)$ where $R<R_0$ does not depend on $\hbar $. Assume, moreover, that
\begin{align}
&\|( \ham{J} (\hbar ) - E_j(\hbar ))\mbf{u}_j(\hbar ) \| \leq \rho(\hbar ) 
\label{jkmm2:quasi-corol1-eq1a}
\intertext{and}
&\text{all } \tilde{\mbf{u}}_j(\hbar ) \in \Hc \text{ such that } \|\tilde{\mbf{u}}_j(\hbar ) - \mbf{u}_j(\hbar ) \| \leq \frac{\hbar ^N}{M}, \quad 1\le j \le m(\hbar ), \nonumber \\
&\text{are linearly independent}, 
\label{jkmm2:quasi-corol1-eq2a}
\end{align} 
where $\rho(\hbar ) \leq \hbar ^{5+N}/(C\log \hbar ^{-1})$, $C\gg 1$, $N\ge 0$ and $M>0$. Then there exists $C_0 = C_0(l_0,r_0)>0$ 
such that for any $B>0$ and $K\in \Z _+$ there is an 
$\hbar _1 = \hbar _1(A,B,M,N) \le \hbar _0$ such that for any $\hbar \in (0,\hbar _1]$ there will be at least $m(\hbar )$ eigenvalues of $\ham{J}(\hbar)$ in 
\begin{align}
[l(\hbar ) - b(\hbar )\log \frac{1}{\hbar }, r(\hbar ) + b(\hbar )\log \frac{1}{\hbar }] + i[-b(\hbar ),0],
\label{jkmm2:quasi-corol1-eq3_2}
\end{align}
where 
\begin{equation*}
b(\hbar ) = \max {(C_0BM \rho(\hbar ) \hbar ^{-5-N}, e^{-B/\hbar })}.
\end{equation*}
\label{jkmm2:quasi-corol1}
\end{corollary}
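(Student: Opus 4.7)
The plan is to mirror the proof of Theorem \ref{jkmm2:quasi-thm1} step by step, substituting each ingredient pertaining to $\ham{D}_\theta(\hbar)$ with its CAP counterpart. Since $\ham{J}(\hbar)$ is already a closed operator whose resolvent is meromorphic in $\{\im z > -\delta_0\}$ by Lemma \ref{J-resolvent-meromorphic-delta-lemma}, no complex distortion is needed. Near any eigenvalue $w_0(\hbar)$ the localized resolvent $\mbf{\h}(\ham{J}(\hbar)-z)^{-1}\widetilde{\mbf{\h}}$ admits a Laurent decomposition analogous to \eqref{jkmm2:quasi-eq1}, and the same telescoping argument as in Lemma \ref{jkmm2:quasi-lem1} applies verbatim, because $\ham{J}$ and $\ham{D}$ differ only by a multiplication operator, so $[\widetilde{\mbf{\h}},\ham{J}]\mbf{\h}_l = [\widetilde{\mbf{\h}},\ham{D}]\mbf{\h}_l = 0$. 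This gives the factorization $\mbf{A}_j\mbf{\h} = \mbf{A}_1\mbf{Q}_j$, which is what allows the singular contribution to be absorbed into the range of the spectral projection later.

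Next, I would let $w_1,\ldots,w_l$ be the distinct eigenvalues of $\ham{J}(\hbar)$ inside an enlarged rectangle $\mathcal{R}_2$, defined as in \eqref{jkmm2:quasi-thm1-eq5} but with every occurrence of $\hbar^{-3}$ replaced by $\hbar^{-4}$ (reflecting the counting bound of Proposition \ref{jkmm2:aux-prop1} in place of Theorem \ref{jkmm2:restate-thm1}), and with the new choice $S = \max(e^3 M \hbar^{-N}\rho(\hbar),\, e^{-2B/\hbar},\, \hbar^{K+5})$. Let $\mbf{\Pi}$ be the orthogonal projection onto $\bigcup_j \mbf{A}_1^{(j)}(\Hc)$ and $\mbf{\Pi}' = \mbf{1}-\mbf{\Pi}$. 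By the resolvent identity from the preceding paragraph, $\mbf{F}(z):=\mbf{\Pi}'\mbf{\h}(\ham{J}(\hbar)-z)^{-1}\mbf{\h}$ is holomorphic on a neighborhood of $\mathcal{R}_2$. On the top edge $\im z = S$ one has $\|\mbf{F}(z)\|\le C/S$ by the elementary dissipativity bound (using $\re W\ge 0$, so $\C_+$ lies in $\rho(\ham{J})$ with resolvent norm $\le 1/\im z$), and on $\mathcal{R}_2\setminus \bigcup_j D(w_j,S)$ the a priori estimate $\|(\ham{J}-z)^{-1}\|\le Ae^{A\hbar^{-4}\log(1/S)}$ of Proposition \ref{jkmm2:aux-prop3} gives the same bound for $\mbf{F}$.

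I would then enlarge $\mathcal{R}_1 \subset \mathcal{R}_2$ by adjoining all connected chains of disks $D(w_j,S)$ that meet $\mathcal{R}_1$. By Proposition \ref{jkmm2:aux-prop1} the total number of eigenvalues in $\mathcal{R}_2$ is $\Oc(\hbar^{-4})$, so the diameter of any such chain is $\Oc(\hbar^{-4}S)$, while the distance from $\mathcal{R}_1\cap\C_-$ to $\partial\mathcal{R}_2$ is of order $A\hbar^{-4}(\log S^{-1})S$, which dominates for $\hbar$ small. Hence the augmented set still lies in $\mathcal{R}_2$ where $\mbf{F}$ is holomorphic, and the classical maximum principle propagates the boundary bound to all of $\mathcal{R}_1$. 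Letting $z\to E_j(\hbar)$ and invoking \eqref{jkmm2:quasi-corol1-eq1a} yields
\begin{equation*}
\|\mbf{\Pi}\mbf{u}_j - \mbf{u}_j\| = \|\mbf{\Pi}'\mbf{\h}\mbf{u}_j\| = \|\mbf{F}(E_j)(\ham{J}(\hbar)-E_j)\mbf{u}_j\| \le \frac{e^3\rho(\hbar)}{S},
\end{equation*}
which, with the hypothesis $\rho(\hbar)\le \hbar^{5+N}/(C\log\hbar^{-1})$ and our choice of $S$, is bounded by $\hbar^N/M$. Assumption \eqref{jkmm2:quasi-corol1-eq2a} then forces $\{\mbf{\Pi}\mbf{u}_j\}_{j=1}^{m(\hbar)}$ to be linearly independent, so $\rank \mbf{\Pi}\ge m(\hbar)$, and $\ham{J}(\hbar)$ has at least $m(\hbar)$ eigenvalues in $\mathcal{R}_2$, which for $\hbar$ small is contained in \eqref{jkmm2:quasi-corol1-eq3_2}.

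The main obstacle is bookkeeping: verifying that each geometric constant survives the shift of $\hbar$-exponent from $-3$ to $-4$. In particular the chain-of-disks argument requires the counting bound of Proposition \ref{jkmm2:aux-prop1} on a rectangle slightly larger than $\mathcal{R}_2$ (which is why Proposition \ref{jkmm2:aux-prop3} is stated with $\mathcal{R}\subsetneq\mathcal{R}'$), and one must confirm that $\ham{J}$ has no accumulation of eigenvalues on the real axis inside $\mathcal{R}_2$, which is guaranteed by the meromorphy from Lemma \ref{J-resolvent-meromorphic-delta-lemma}. Conversely, the replacement of complex distortion by the built-in dissipativity of $\ham{J}$ on $\C_+$ actually streamlines the argument compared with the $\ham{D}_\theta$ case, so no genuinely new analytic input is needed beyond what Propositions \ref{jkmm2:aux-prop1} and \ref{jkmm2:aux-prop3} already supply.
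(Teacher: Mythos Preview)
Your proposal is correct and follows essentially the same approach as the paper: mirror the proof of Theorem~\ref{jkmm2:quasi-thm1}, replacing the $\Oc(\hbar^{-3})$ counting bound by the $\Oc(\hbar^{-4})$ bound from Proposition~\ref{jkmm2:aux-prop1}, the a~priori estimate by Proposition~\ref{jkmm2:aux-prop3}, and the upper-half-plane resolvent bound by the dissipativity estimate $\|(\ham{J}-z)^{-1}\|\le 1/\im z$. The paper streamlines slightly by taking $\mbf{F}(z)=(\mbf{1}-\mbf{\Pi})(\ham{J}-z)^{-1}\mbf{\h}$ without the left cutoff (no complex distortion is in play, so the resolvent is globally meromorphic), and it notes that because the dissipativity bound holds for all $\im z>0$ rather than only $\im z>\hbar^{K}$, the extra $\hbar^{K+5}$ term you put in $S$ is not needed---hence the absence of an $\hbar^{K}$ term in the stated $b(\hbar)$.
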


\begin{proof}
This proof works just as above if we use  $\coun(\ham{J}(\hbar),\Rc )=\Oc (\hbar ^{-4})$ (see Proposition \ref{jkmm2:aux-prop1}) and define $w$ accordingly. From 
\begin{align}
-\im \langle (\ham{J}-z)\mbf{u}, \mbf{u} \rangle &= \| \sqrt{\re (W)}\mbf{u} \|^2 + \im z \|\mbf{u}\|^2 \ge \im z \|\mbf{u}\|^2
\label{jkmm2:quasi-corol1-eq1}
\end{align}
it follows that 
\begin{align}
\| (\ham{J} - z)^{-1}\|\le \frac{1}{\im z} \quad \text{for } \im z > 0. 
\label{J-resolvent-self-adjoint-estimate}
\end{align}
Therefore, by Proposition \ref{jkmm2:aux-prop3}, we can apply the semiclassical maximum principle to the 
holomorphic function $\mbf{F}(z)=(\mbf{1} -\mbf{\Pi} )(\ham{J} - z )^{-1}\mbf{\h}$, where $\mbf{\Pi}$ is defined similarly as before.
\end{proof}
\begin{remark}
Notice that due to \eqref{J-resolvent-self-adjoint-estimate} we do not need to take $c(\hbar )$ as large as in Theorem \ref{jkmm2:quasi-thm1} which possibly gives us an improved error estimate. 
\end{remark}
\section{Proof of main results}
\label{jkmm2:prf}

\subsection{Approximating a single eigenvalue when $R_0' < R_1$}
\label{jkmm2:prfind1}
We are now in position to prove that a single resonance of $\ham{D}(\hbar )$ generates a single eigenvalue of $\ham{J}(\hbar )$ 
nearby in the sense stated in Theorem~\ref{jkmm2:resultindthm1}, and vice versa. To show these results we use Theorem \ref{jkmm2:quasi-thm1} and Corollary \ref{jkmm2:quasi-corol1}, respectively, with $m=1$. 

\begin{proof}[Proof of Theorem~\ref{jkmm2:resultindthm1}] We suppress the dependence of $\hbar$ for all operators below.
\newline
\noindent
1.  Take $\h \in \ccs (B(0,R_{1}))$ with $\h = 1$ in a neighborhood of $B(0,R_0')$. Let 
$\mbf{u}$ be an eigenfunction of $\ham{D}_\th $ associated with the eigenvalue $z_0$. Since $\mbf{\h} \ham{W} = \mbf{0}$ we have 
\begin{equation*}
(\ham{J} - \re z_0) \mbf{\h} \mbf{u} = [\ham{D}_\th  , \mbf{\h}] \mbf{u} + i \im z_0 \mbf{\h} \mbf{u}.
\label{jkmm2:resultindthm1-eq2}
\end{equation*}
Furthermore, it follows from \eqref{jkmm2:ellipticDtheta} that $\|[\ham{D}_\th , \mbf{\h}] \mbf{u}\| = \Oc (\hbar ^\infty )\|\mbf{u}\|$. 
Consequently, 
\begin{equation}
\|( \ham{J} - \re z_0) \mbf{\h} \mbf{u} \| \leq (- \im z_0 +\Oc (\hbar ^\infty ))\| \mbf{u} \|.  
\label{jkmm2:resultindthm1-eq4}
\end{equation}
Another application of \eqref{jkmm2:ellipticDtheta} gives
\begin{equation*}
\| \mbf{u} \| \leq \| \mbf{\h} \mbf{u} \| + \|(\mbf{1}-\mbf{\h}) \mbf{u} \| \leq \| \mbf{\h}  \mbf{u} \| + \Oc (\hbar ^\infty )\| \mbf{u} \| 
\label{jkmm2:resultindthm1-eq5}
\end{equation*}
and, therefore, we obtain, for $\hbar $ sufficiently small, that $(1 - o(1))\| \mbf{u} \| \leq \| \mbf{\h} \mbf{u} \|$,  and thus 
$\| \mbf{u} \| \leq C \| \mbf{\h}  \mbf{u} \|$. Then (\ref{jkmm2:resultindthm1-eq4}) implies that 
\begin{equation*} 
\|( \ham{J} - \re z_0) \mbf{\h}  \mbf{u} \| \leq (- \im z_0 + \Oc (\hbar ^\infty )) \| \mbf{\h} \mbf{u} \|
\label{jkmm2:resultindthm1-eq6}
\end{equation*}
and, by interpreting $\mbf{\h} \mbf{u} / \| \mbf{\h} \mbf{u} \|$ as a quasimode for $\ham{J}(\hbar)$,  an application of 
Corollary~\ref{jkmm2:quasi-corol1} yields
\begin{equation*}
\rho (\hbar ) = -\im z_0 + \Oc (\hbar ^\infty ) \leq \frac{\hbar ^5}{C\log \frac{1}{\hbar }}.
\label{jkmm2:resultindthm1-eq7}
\end{equation*} 
\noindent
2. Let $\mbf{f} \in \Hb^1(\R^{3},\C^4)$ be an eigenvector of $\ham{J}$ corresponding to $w_0$, i.e. $\ham{J} \mbf{f}=w_0 \mbf{f}$. Let $\h \in \ccs (B(0,R_0))$, $0\le \h \le 1$, be 1 near $B(0,R_2)$. We will show that $\mbf{\h} \mbf{f}  / \| \mbf{\h} \mbf{f} \|$ 
is a quasimode. Therefore we consider 
\begin{align}
(\ham{D} - \re w_0) \mbf{\h} \mbf{f} = [\ham{D}, \mbf{\h}]\mbf{f} +i\mbf{\h}\ham{W}\mbf{f}+ i \im w_0 \mbf{\h} \mbf{f}.
\label{jkmm2:resultindthm3-eq3}
\end{align}
From \eqref{jkmm2:quasi-corol1-eq1} we have $\|\sqrt{\re W}\mbf{f}\| = \sqrt{-\im w_0}\|\mbf{f}\|$, which because of Assumption \ref{jkmm2:capham-assumonw} (iv) and the fact that $[\ham{D}, \mbf{\h}]\mbf{f}$ is supported in $|x|>R_2$, makes the norms of the first two terms on the right hand side bounded by $C\sqrt{-\im w_0}\|\mbf{f}\|$. For the same reason $\mbf{\h} \mbf{f}$ is uniformly bounded away from zero and $\| \mbf{f}\| \le C \| \mbf{\h} \mbf{f} \|$. Thus    
\begin{equation*}
\| (\ham{D} - \re w_0) \mbf{\h} \mbf{f} \| \leq C\sqrt{-\im w_0} \leq \frac{\hbar ^4 }{C\log \frac{1}{\hbar }}.
\end{equation*}
An application of Theorem~\ref{jkmm2:quasi-thm1} finishes the proof.  
\end{proof}

\subsection{Approximating a single eigenvalue when $R_1\leq R_0'$}
\label{jkmm2:prfind3}

\subsubsection{Semiclassical projections}
Denote by $\mbf{\l}_{j} : \Tsf^{\ast} \R^{3} \to \mathrm{M}_{4}(\C )$, $j= 1,\ldots ,4$, the projection matrices onto the eigenspaces corresponding to 
the eigenvalues $\l_{j}$ of the principal symbol $\mbf{d}_0$ of $\ham{D}$. Since the symbols $\mbf{\l}_{j}$ depend on $x$, their 
quantizations $\mbf{\L}_{j}:=\opw{\mbf{\l}_{j}}$ are not projection operators. Rather they satisfy \cite{emmrich96} 
\begin{equation}
\mbf{\L}_{j}^{2} -\mbf{\L}_{j}= \Oc (\hbar), \quad j=1,\ldots ,4.
\label{jkmm2:prfind3-eq3-0}
\end{equation}
In addition, $\mbf{\L}_{j}$ do not commute with $\ham{D}(\hbar)$. One can improve the error on the right-hand side
of (\ref{jkmm2:prfind3-eq3-0}) by adding a suitable term of order $\hbar$ to the symbol $\ham{D}_{0}$ and, subsequently, quantization
results in an operator which is a projector up to an error of order $\hbar^{2}$. Iteration of this process leads to an error of arbitrary order 
$\hbar^{N}$ \cite{emmrich96}. 

Under Assumption~\ref{jkmm2:assum-hyp} it is shown in \cite[Proposition 2.1]{bg04} that one can construct 
\begin{equation*}
\mbf{\l}_{j} \sim \sum _{n\geq 0} \hbar^{n}  \mbf{\l}_{j, n}
\label{jkmm2:prfind3-eq3}
\end{equation*}
such that 
\begin{equation*}
\mbf{\l}_{j} \# \mbf{\l}_{j} \sim \mbf{\l}_{j} \sim \mbf{\l}_{j}^{\ast} \quad \text{and} \quad [\mbf{\l}_{j} , \mbf{d} ]_\# \sim 0, 
\label{jkmm2:prfind3-eq4}
\end{equation*}
and, moreover, in agreement with the discussion above, the corresponding quantizations $\mbf{\L}_{j}$ satisfy the relations ($j=1,\ldots ,4$):
\begin{align*}
  & \mbf{\L}_{1} + \mbf{\L}_{2} \equiv \mbf{1}, \\
  & \mbf{\L}_{j}^{2} \equiv \mbf{\L}_{j}  \equiv \mbf{\L}_{j}^{\ast} ,\\
  & \| [ \ham{D}, \mbf{\L}_{j}] \| \equiv 0,
\label{jkmm2:prfind3-eq4}
\end{align*}
where $\equiv $ means modulo terms of norm $\Oc (\hbar ^\infty )$. The operators $\mbf{\L}_{j}$, $j=1,\ldots ,4$, are called almost 
orthogonal projections.

\subsubsection{Matrix valued Egorov theorem}
We now indicate how to solve Heisenberg's equation of motion semiclassically in the sense that given $\mbf{A}=\opw {\mbf{a}}$ with 
$\mbf{a}\in \Ssf(1)$ we can, for all $t$, find $\mbf{a}(t)\in \Ssf(1)$ such that
\begin{equation*}
  \pdf{}{t}\opw{\mbf{a}(t)} = [\ham{D},\opw{\mbf{a}(t)}] + \mbf{R}(t), \quad \mbf{A}(0) = \mbf{A},
\label{jkmm2:prfind3-eq5}
\end{equation*}
with $\| \mbf{R}(t) \| = \Oc (\hbar ^N)$ for any $N\in \N $. This means we can approximate the time evolution 
$\mbf{A}(t):=\exp(i \ham{D} t/ \hbar ) \mbf{A} \exp(-i \ham{D} t/ \hbar )$ of $\mbf{A}$ to any order.

We extract the following lemma from \cite[Theorem 3.2 and the discussions preceeding and proceeding it]{bg04}.

\begin{lemma}[Matrix Egorov theorem]
Let Assumption~\ref{jkmm2:perturd-assumV} and Assumption~\ref{jkmm2:assum-hyp} be satisfied. Suppose that 
$\mbf{a} \in \Ssf(1)$ is block-diagonal with respect to the $\mbf{\l}_j$ in the sense that
\begin{equation*}
  \mbf{a} \sim \sum _{j=1}^4 \mbf{\l}_j \# \mbf{a}  \# \mbf{\l}_j.
\label{jkmm2:prfind3-lem1-eq1}
\end{equation*}
Then, for any $T>0$, we can find $\mbf{a}(t)\in \Ssf(1)$ for all $0\leq t \le T$ such that 
\begin{equation*}
  \| \mbf{A}(t) - \opw{\mbf{a}(t)} \| = \Oc (\hbar ^\infty ) \quad \text{for all }t\in [0, T].
\label{jkmm2:prfind3-lem1-eq2}
\end{equation*}
Moreover, the principal symbol is given by 
\begin{equation*}
\mbf{a}_0(x,\x ,t) = \sum _{j= 1}^4 \mbf{\tF}_{j j}^\ast (x, \x ,t) \mbf{\l}_{j ,0}(\F_{j}^{t}(x,\x) ) \mbf{a}_0(\F_{j}^t (x,\x )) \mbf{\l}_{j,0}(\F_{j}^{t}(x,\x)) \mbf{\tF}_{j j} (x, \x ,t)
\label{jkmm2:prfind3-lem1-eq3}
\end{equation*} 
where the $4 \times 4$ unitary transport matrices $\mbf{\tF}_{j j }$ are given by 
\begin{equation*}
\frac{d}{dt} \mbf{\tF}_{j j  }(x,\x ,t) +i \widetilde{ \TF}_{j j  ,1}(\F_{j}^{t}(x,\x) ) \mbf{\tF}_{j j  }(x, \x ,t) = 0, \quad \mbf{\tF}(x,\x ,0) = \mbf{I}_{4}.
\label{jkmm2:prfind3-lem1-eq4}
\end{equation*}
Here 
\begin{equation*}
\widetilde{\TF}_{ j j  ,1} = -i \frac{\l _{j}}{2} \mbf{\l}_{j,0} \{ \mbf{\l}_{j ,0}, \mbf{\l}_{j ,0} \} \mbf{\l}_{j ,0}  - 
i  [ \mbf{\l}_{j ,0}, \{\l _{j} , \mbf{\l}_{j ,0} \}] + \mbf{\l}_{j,0} \TF_{j,1} \mbf{\l}_{j ,0}
\label{jkmm2:prfind3-lem1-eq5}
\end{equation*}
where $\TF_{j,1}$ is the subprincipal symbol of $\opw {\mbf{\l}_{j} \# \mbf{d} \#  \mbf{\l}_{j}}$. 
\label{jkmm2:prfind3-lem1}
\end{lemma}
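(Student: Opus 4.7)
The plan is to mimic the standard scalar Egorov proof, but crucially use the almost orthogonal projections $\mathbf{\L}_{j}$ to decouple the four eigenvalue branches. First I would observe that, because $[\ham{D},\mbf{\L}_j]\equiv 0$ modulo $\Oc(\hbar^\infty)$, the Heisenberg evolution preserves the block-diagonal structure up to negligible errors:
\begin{equation*}
\mbf{A}(t) \equiv \sum_{j=1}^{4} \mbf{\L}_j\, \mbf{A}(t)\, \mbf{\L}_j \pmod{\Oc(\hbar^\infty)}.
\end{equation*}
So it suffices to construct, for each $j$, a symbol $\mbf{a}^{(j)}(t)\in\Ssf(1)$ block-diagonal with respect to $\mbf{\l}_j$ that approximates $\mbf{\L}_j\mbf{A}(t)\mbf{\L}_j$, and then sum.

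Next, I would make an ansatz $\mbf{a}^{(j)}(t)\sim \sum_{n\ge 0}\hbar^{n}\mbf{a}^{(j)}_{n}(t)$ and substitute into the Heisenberg equation $\partial_t \opw{\mbf{a}^{(j)}(t)} = \frac{i}{\hbar}[\ham{D},\opw{\mbf{a}^{(j)}(t)}]$. Using the asymptotic expansion \eqref{product_expansion} of $\#$ together with the block-diagonality, the $\hbar^{-1}$ term cancels (the matrices $\mbf{\l}_{j,0}$ and $\mbf{a}^{(j)}_0$ commute on the block), and the $\hbar^0$ equation takes the transport form
\begin{equation*}
\partial_t \mbf{a}^{(j)}_0 - \{\l_j,\mbf{a}^{(j)}_0\} + i[\widetilde{\TF}_{jj,1},\mbf{a}^{(j)}_0] = 0.
\end{equation*}
The hyperbolicity Assumption~\ref{jkmm2:assum-hyp} is exactly what guarantees that the subprincipal contribution reduces to this single block and that $\widetilde{\TF}_{jj,1}$ is well-defined; without it, off-diagonal terms $\mbf{\l}_{j,0}\TF_{k,1}\mbf{\l}_{k,0}$ with $j\ne k$ would produce small denominators $\l_j-\l_k$. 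The explicit solution is obtained by pulling $\mbf{a}_0$ back along $\F_j^t$ and conjugating by the unitary transport matrix $\widetilde{\mbf{\F}}_{jj}$ that solves the stated ODE, yielding the formula in the lemma.

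At higher orders $n\ge 1$, comparing coefficients of $\hbar^n$ in Heisenberg's equation gives an inhomogeneous transport equation of the same structure, whose source is a polynomial expression in the previously constructed $\mbf{a}^{(j)}_{0},\ldots,\mbf{a}^{(j)}_{n-1}$ and derivatives of $\mbf{d}$. Each such equation can be solved globally on $[0,T]$ by the method of characteristics with the same transport matrix $\widetilde{\mbf{\F}}_{jj}$, producing $\mbf{a}^{(j)}_n(t)\in \Ssf(1)$ with constants depending on $T$. The main obstacle here is controlling seminorms: one must check that the characteristic flow $\F_j^t$ and the transport matrices preserve the symbol class $\Ssf(1)$ uniformly in $t\in[0,T]$, which relies on the compact support of $\ham{V}$ (so that outside a large ball $\F_j^t$ is essentially the free flow) and the smoothness of the $\mbf{\l}_{j,0}$ away from crossings, again supplied by Assumption~\ref{jkmm2:assum-hyp}.

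Finally, I would use a Borel-type resummation to assemble $\mbf{a}^{(j)}(t)\sim \sum_n \hbar^n \mbf{a}^{(j)}_n(t)$ into an honest symbol in $\Ssf(1)$, set $\mbf{a}(t)=\sum_j \mbf{a}^{(j)}(t)$, and verify that with this choice
\begin{equation*}
\partial_t \opw{\mbf{a}(t)} - \frac{i}{\hbar}[\ham{D},\opw{\mbf{a}(t)}] = \mbf{R}(t), \qquad \|\mbf{R}(t)\|=\Oc(\hbar^\infty),
\end{equation*}
uniformly on $[0,T]$. A standard Duhamel/Gronwall argument comparing $\opw{\mbf{a}(t)}$ with $\mbf{A}(t)$ then upgrades the symbolic $\Oc(\hbar^\infty)$ error to the operator-norm bound $\|\mbf{A}(t)-\opw{\mbf{a}(t)}\|=\Oc(\hbar^\infty)$, where Calderón--Vaillancourt (Proposition~\ref{jkmm2:pseudo-cvprop}) is used at each step to pass from symbol estimates to operator-norm estimates.
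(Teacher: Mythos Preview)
The paper does not give its own proof of this lemma: it is simply \emph{extracted} from \cite[Theorem~3.2 and the surrounding discussion]{bg04}, with the only accompanying remark being that the hypotheses there (their Property~(3.9) and the assumptions of their Lemma~3.3) are satisfied in the Dirac setting. Your sketch is a faithful outline of the Bolte--Glaser argument itself: decouple via the almost-projections, derive the block transport equation at principal order, solve the higher-order inhomogeneous transport equations recursively along the characteristics, Borel-sum, and close with Duhamel. So there is no discrepancy in approach to report; you have supplied a proof where the paper chose to cite one.
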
 

\begin{remark}
Notice that Lemma~\ref{jkmm2:prfind3-lem1} requires that both \cite[Property (3.9)]{bg04} and the assumptions of 
\cite[Lemma 3.3]{bg04} are satisfied; but these conditions are clearly fulfilled in our case.
\end{remark}

By imposing additional assumptions we can say even more about $\opw{\mbf{a}(T)}$.

\begin{lemma}
Let Assumption~\ref{jkmm2:perturd-assumV}, Definition~\ref{jkmm2:hflow-def} , and Assumption~\ref{jkmm2:assum-hyp} hold. Then, 
provided $T$ and $\hbar ^{-1}$ are sufficiently large we can construct $\mbf{a}(x, \x ,t)$ as in Lemma~\ref{jkmm2:prfind3-lem1} so that 
$\mbf{a}(x, \x , T) = \mbf{I}_{4} + \Oc (\hbar )$ in a neighborhood of $(x_0, \x _0)$. 
\label{jkmm2:prfind3-lem2}
\end{lemma}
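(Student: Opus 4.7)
The plan is to construct the initial symbol $\mbf{a}$ by taking it to be the identity matrix times a compactly supported cutoff localized near the four time-$T$ forward images $\F_j^T(x_0,\x_0)$ of $(x_0,\x_0)$, and then to verify via Lemma~\ref{jkmm2:prfind3-lem1} that the Egorov-evolved principal symbol at time $T$ equals $\mbf{I}_4$ on a neighborhood of $(x_0,\x_0)$.

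First, by the nontrapping condition (Definition~\ref{jkmm2:hflow-def}) applied along each eigenvalue-flow, for $T$ sufficiently large we have $|x_j(T)|>R$ for any preassigned $R>R_0'$ and every $j$; in particular $\F_j^T(x_0,\x_0)$ lies outside $\supp\ham{V}$. By continuity of the Hamiltonian flow over the finite interval $[0,T]$ there exist a neighborhood $U$ of $(x_0,\x_0)$ and open sets $V_j\subset\{|x|>R\}$ with $\F_j^T(\overline U)\subset V_j$, $j=1,\ldots,4$. Pick $\chi\in\ccs(\Tsf^{\ast}\R^{3},[0,1])$ with $\chi\equiv 1$ on $\bigcup_j V_j$ and set $\mbf{a}:=\chi\mbf{I}_4\in\Ssf(1)$. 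Since $\mbf{I}_4=\sum_j\mbf{\l}_{j,0}$, block-diagonality of $\mbf{a}$ holds at principal order automatically, and the usual order-by-order subprincipal corrections needed to enforce $\mbf{a}\sim\sum_j\mbf{\l}_j\#\mbf{a}\#\mbf{\l}_j$ can be supported entirely in the transition region $\{0<\chi<1\}$, leaving $\mbf{a}\equiv\mbf{I}_4$ identically on a neighborhood of each $\F_j^T(x_0,\x_0)$.

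For $(x,\x)\in U$ we then have $\mbf{a}_0(\F_j^T(x,\x))=\mbf{I}_4$ for every $j$, and applying $\mbf{\l}_{j,0}^2=\mbf{\l}_{j,0}$ in the transport formula of Lemma~\ref{jkmm2:prfind3-lem1} collapses it to
\begin{equation*}
\mbf{a}_0(x,\x,T)=\sum_{j=1}^{4}\mbf{\tF}_{jj}^{\ast}(x,\x,T)\,\mbf{\l}_{j,0}(\F_j^T(x,\x))\,\mbf{\tF}_{jj}(x,\x,T).
\end{equation*}
The unitary transport matrices $\mbf{\tF}_{jj}$ defined in Lemma~\ref{jkmm2:prfind3-lem1} are designed so as to parallel-transport the $j$-th eigenprojector of $\mbf{d}_0$ along the $\l_j$-flow, which amounts to the intertwining identity
\begin{equation*}
\mbf{\tF}_{jj}^{\ast}(x,\x,t)\,\mbf{\l}_{j,0}(\F_j^t(x,\x))\,\mbf{\tF}_{jj}(x,\x,t)=\mbf{\l}_{j,0}(x,\x).
\end{equation*}
Combined with $\sum_j\mbf{\l}_{j,0}=\mbf{I}_4$ this yields $\mbf{a}_0(x,\x,T)=\mbf{I}_4$ on $U$; the subprincipal and higher terms of $\mbf{a}\sim\sum_n\hbar^n\mbf{a}_n$ contribute only $\Oc(\hbar)$ for $\hbar$ small, giving the claim.

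The central technical point is the intertwining identity. One would derive it by differentiating $\mbf{\tF}_{jj}^{\ast}\mbf{\l}_{j,0}(\F_j^t)\mbf{\tF}_{jj}$ in $t$: using the transport equation together with $\tfrac{d}{dt}\mbf{\l}_{j,0}(\F_j^t)=\{\l_j,\mbf{\l}_{j,0}\}\!\circ\!\F_j^t$, the commutator term $-i[\mbf{\l}_{j,0},\{\l_j,\mbf{\l}_{j,0}\}]$ appearing in the definition of $\widetilde{\TF}_{jj,1}$ is precisely what kills the Poisson-bracket contribution, so the time derivative vanishes identically. This is essentially the content of the Bolte--Glaser framework \cite{bg04} and can be invoked as a known property; modulo this input, the construction is a routine localization argument driven by the nontrapping hypothesis, which is why the result requires both $T$ and $\hbar^{-1}$ to be large.
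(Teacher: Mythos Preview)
Your argument is correct, but it differs from the paper's in the choice of initial symbol and in which Bolte--Glaser identity is invoked. The paper does not start from the scalar cutoff $\chi\mbf{I}_4$; instead it takes a more elaborate initial principal symbol
\[
\mbf{a}_{0}(x,\x)=\sum_{j}\mbf{\h}_{j}(x,\x)\,\mbf{\l}_{j,0}(x,\x)\,\mbf{\tF}_{jj}\,\mbf{\l}_{j,0}\,\mbf{\tF}_{jj}^{\ast}\,\mbf{\l}_{j,0}(x,\x),
\]
with the transport matrices frozen at the backward images of $(x_0,\x_0)$, and then uses the one-sided identity $\mbf{\l}_{j,0}(\F_j^t)\,\mbf{\tF}_{jj}\,\mbf{\l}_{j,0}=\mbf{\tF}_{jj}\,\mbf{\l}_{j,0}$ (Bolte--Glaser, Eq.~(4.1)) together with its adjoint and the mutual orthogonality $\mbf{\l}_{j,0}\mbf{\l}_{k,0}=0$ for distinct eigenvalues to arrive at $\mbf{a}_0(x,\x,T)=\sum_j\mbf{\h}_j(\F_j^T(x,\x))\,\mbf{\l}_{j,0}(x,\x)$. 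You reach exactly the same endpoint but via the two-sided intertwining $\mbf{\tF}_{jj}^{\ast}\,\mbf{\l}_{j,0}(\F_j^t)\,\mbf{\tF}_{jj}=\mbf{\l}_{j,0}$, which is stronger than what the paper cites. Your differentiation sketch for this identity is sound: the first and third terms of $\widetilde{\TF}_{jj,1}$ commute with $\mbf{\l}_{j,0}$, while for the remaining term one checks, using $\mbf{\l}_{j,0}\{\l_j,\mbf{\l}_{j,0}\}\mbf{\l}_{j,0}=0$, that $i[\widetilde{\TF}_{jj,1},\mbf{\l}_{j,0}]=-\{\l_j,\mbf{\l}_{j,0}\}$, so the time derivative vanishes. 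The payoff of your route is a cleaner initial datum and a shorter computation; the paper's route has the advantage that it relies only on the identity explicitly recorded in \cite{bg04}, so no auxiliary verification is needed.
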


\begin{proof}
By defining 
\begin{multline*}
\mbf{a}_{0} (x,\x ) = \sum _{j=1}^4 \mbf{\h}_{j} (x,\x ) \mbf{\l}_{j,0}(x,\x ) \mbf{\tF}_{jj}(\F_{j}^{-T }(x_0, \x _0), T) \mbf{\l}_{j,0}(\F_{j}^{-T }(x_0, \x _0))\\ 
\times \mbf{\tF}_{jj}^{\ast} (\F_{j} ^{-T} (x_0, \x _0), T) \mbf{\l}_{j,0} (x,\x )
\label{jkmm2:prfind3-lem2-eq3}
\end{multline*}  
it follows from Lemma~\ref{jkmm2:prfind3-lem1}, the 
identity (see \cite[Equation (4.1)]{bg04}),
\begin{equation*}
  \mbf{\l}_{j,0}(\F_{j}^{t} (x,\x )) \mbf{\tF}_{jj} (x, \x ,t) \mbf{\l}_{j,0} (x,\x ) = \mbf{\tF}_{jj}(x,\x ,t) \mbf{\l}_{j,0} (x ,\x )
\label{jkmm2:prfind3-lem2-eq4}
\end{equation*}
and its adjoint equation and the fact that $\mbf{\l}_{j,0} (x,\x ) \mbf{\l}_{k,0} (x,\x ) = 0$ whenever $\mbf{\l}_{j,0}$ and $\mbf{\l}_{k,0}$ are projections corresponding to distinct eigenvalues, that
\begin{equation*}
\mbf{a}_{0} (x, \x ,T) = \sum_{j=1}^4 \mbf{\h}_{j}(\F_{j}^{T} (x,\x) ) \mbf{\l}_{j,0}(x,\x )
\label{jkmm2:prfind3-lem2-eq5}
\end{equation*}
and, in particular, $\mbf{a}_{0}(x_0, \x _0,T) = \mbf{I}_{4}$. 
\end{proof}

\subsubsection{Propagation of singularities}

\begin{lemma}[Propagation of singularities]
Let $R_{0}^{\prime} < R_{1}^{\prime}$ and suppose that for some $z_0(\hbar ) \in [l_0,r_0]$ and $\mbf{v}(\hbar ) \in \dom( \ham{D})$ with $\supp \mbf{v} (\hbar ) \subset B(0,R_1^{\prime})$ and $\|\mbf{v}(\hbar ) \| \le C$ for some $C>0$ we have 
\begin{equation*}
( \ham{D} (\hbar ) - z_0(\hbar )) \mbf{v}(\hbar ) = \mbf{g}(\hbar )
\label{jkmm2:prfind3-lem3-eq1}
\end{equation*} 
with $\| \mbf{g}(\hbar ) \| = \Oc ( \ve(\hbar))$, $\ve(\hbar)=\Oc(\hbar^{N})$ for some $N>0$. 
If $(x_0, \x _0)\in \Tsf^{\ast} \R^{3}$ is such that the norms of the $x$-projections of 
$\F_{j}^{T}(x_0, \x _0)$, $j=1,\ldots ,4$, exceed $R_1'$ for some $0<T<\infty $ then $\mbf{v}(\hbar )$ is microlocally $\Oc ( \hbar^{-1} \ve(\hbar ) + \hbar ^\infty)$ at $(x_0, \x _0)$.
\label{jkmm2:prfind3-lem3}
\end{lemma}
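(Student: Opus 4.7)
The plan is to produce an elliptic cutoff at $(x_0,\x_0)$ as the image under the Heisenberg time evolution of a symbol which is microlocally supported outside $B(0,R_1')$, and then exploit the support property of $\mbf{v}$ together with a Duhamel argument for the evolution.

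First I would invoke Lemma~\ref{jkmm2:prfind3-lem2} to construct a symbol $\mbf{a}_0\in \Ssf(1)$ such that the associated principal symbol $\mbf{a}(x,\x,T)$ of the Heisenberg-evolved operator satisfies $\mbf{a}(x,\x,T)=\mbf{I}_4+\Oc(\hbar)$ in a neighborhood of $(x_0,\x_0)$. By the construction, $\mbf{a}_0$ is built from cutoffs $\mbf{\h}_j$ localized near $\F_j^{T}(x_0,\x_0)$, and by hypothesis the $x$-projections of these points all lie outside $B(0,R_1')$. Therefore, by shrinking the supports of the $\mbf{\h}_j$ if necessary, we can arrange that $\supp \mbf{a}_0$ has $x$-projection disjoint from $\overline{B(0,R_1')}\supset \supp\mbf{v}$.

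Next I would apply the Egorov-type result in Lemma~\ref{jkmm2:prfind3-lem1}: for the $\mbf{a}_0$ just constructed,
\begin{equation*}
e^{i\ham{D}T/\hbar}\,\opw{\mbf{a}_0}\,e^{-i\ham{D}T/\hbar} = \opw{\mbf{a}(T)} + \Oc(\hbar^{\infty}).
\end{equation*}
To exploit this I would run a standard Duhamel argument: setting $\mbf{u}(t):=e^{-i(\ham{D}-z_0)t/\hbar}\mbf{v}$ and differentiating,
\begin{equation*}
\mbf{u}(T)-\mbf{v} = \frac{1}{i\hbar}\int_0^T e^{-i(\ham{D}-z_0)s/\hbar}\mbf{g}\,ds,
\end{equation*}
so that $\|\mbf{u}(T)-\mbf{v}\|\le (T/\hbar)\|\mbf{g}\|=\Oc(\hbar^{-1}\ve(\hbar))$, and hence $e^{-i\ham{D}T/\hbar}\mbf{v}=e^{-iz_0 T/\hbar}\mbf{v}+\Oc(\hbar^{-1}\ve(\hbar))$ in $\Lb^2$. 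Combining with the Egorov identity and unitarity of $e^{\pm i\ham{D}T/\hbar}$,
\begin{equation*}
\|\opw{\mbf{a}(T)}\mbf{v}\| \le \|\opw{\mbf{a}_0}\,e^{-i\ham{D}T/\hbar}\mbf{v}\| + \Oc(\hbar^{\infty}) \le \|\opw{\mbf{a}_0}\mbf{v}\| + \Oc(\hbar^{-1}\ve(\hbar)) + \Oc(\hbar^{\infty}).
\end{equation*}

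Since $\supp\mbf{v}\subset B(0,R_1')$ we can write $\mbf{v}=\mbf{\h}\mbf{v}$ with $\mbf{\h}$ any smooth cutoff equal to $1$ on $B(0,R_1')$ and supported in a slightly larger ball disjoint from the $x$-projection of $\supp\mbf{a}_0$. The symbol expansion \eqref{product_expansion} then yields $\mbf{a}_0\#(\h\mbf{I}_4)=\Oc(\hbar^{\infty})$ in every $\Ssf(1)$-seminorm, whence $\|\opw{\mbf{a}_0}\mbf{v}\|=\|\opw{\mbf{a}_0}\opw{\h\mbf{I}_4}\mbf{v}\|=\Oc(\hbar^{\infty})$ by Proposition~\ref{jkmm2:pseudo-cvprop}. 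Since $\mbf{a}(T)$ is invertible at $(x_0,\x_0)$ (because $\mbf{a}(x_0,\x_0,T)=\mbf{I}_4+\Oc(\hbar)$), Definition~\ref{wavefront_def} gives exactly the microlocal bound $\Oc(\hbar^{-1}\ve(\hbar)+\hbar^{\infty})$ at $(x_0,\x_0)$. The main obstacle is the joint bookkeeping in step one: one must choose the $\mbf{\h}_j$ small enough that their flow-translates remain disjoint from $B(0,R_1')$ while still being large enough that $\mbf{a}(T)$ is elliptic at $(x_0,\x_0)$; the former is ensured by continuity of the flows $\F_j^{T}$ away from $(x_0,\x_0)$, and the latter follows from Lemma~\ref{jkmm2:prfind3-lem2}.
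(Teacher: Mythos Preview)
Your proof is correct and follows essentially the same strategy as the paper's: localize the initial symbol near the flow-outs $\F_j^{T}(x_0,\x_0)$ (hence away from $\supp\mbf{v}$), evolve through the Heisenberg picture via the Egorov lemma to obtain a symbol elliptic at $(x_0,\x_0)$, and use the approximate eigenfunction equation to control the error picked up along the way.

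The only noteworthy difference is in how the time evolution is handled. The paper works infinitesimally: it sets $l(t)=\|\opw{\mbf{a}(t)}\mbf{v}\|$, differentiates $l(t)^2$, and uses that $\opw{\mbf{a}(t)}$ satisfies the Heisenberg equation up to a remainder $\mbf{R}(t)$ of order $\Oc(\hbar^{\infty})$; a Gronwall-type step then gives $l(T)=\Oc(\hbar^{-1}\ve(\hbar)+\hbar^{\infty})$. You instead work integrally on the state: you apply Duhamel to $e^{-i(\ham{D}-z_0)t/\hbar}\mbf{v}$, pick up the $\Oc(\hbar^{-1}\ve(\hbar))$ error once, and then invoke Egorov at the final time $T$. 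Both routes are standard and equivalent here; yours is arguably a bit more streamlined since it avoids the derivative computation and uses only the endpoint Egorov statement and unitarity of the propagator, while the paper's version makes the role of the approximate Heisenberg equation (and its $\Oc(\hbar^{\infty})$ remainder) more transparent at each instant.
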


\begin{proof}
Let $\mbf{a}(t)\in S (1)$, $0\leq t \leq T$, be as in Lemma~\ref{jkmm2:prfind3-lem1} with $\mbf{a}(0)$ invertible and supported near the points 
$\F_{j}^{T}(x_0 ,\x _0)$ for $j= 1,\ldots ,4$. Denote by $\mbf{A}(t) = \opw{\mbf{a}(t)}$ its quantization. Put 
\begin{equation*}
l (t) = \| \mbf{A}(t) \mbf{v} \|
\label{jkmm2:prfind3-lem3-eq2}
\end{equation*}
so that $l(0) = \Oc (\hbar ^\infty )$ since $\mbf{a}(0)$ has support where $\mbf{v}=0$. Then, using the fact that $\opw {\mbf{a}(t)}$ approximately solves Heisenberg's equation of motion,  
\begin{align*}
    l (t) \frac{d}{dt} l (t) &= \frac{d}{dt} \frac{ l^{2}(t)}{2} = \re \Big \langle  
    \frac{d}{dt}\mbf{A}(t) \mbf{v}, \mbf{A}(t) \mbf{v} \Big \rangle \\
    &= -\hbar ^{-1} \im \Big \langle \big ([\ham{D} - z_0, \mbf{A}(t)] + \mbf{R}(t) \big )\mbf{v}, \mbf{A}(t) \mbf{v} \Big \rangle \\
    &= \hbar ^{-1}\im \Big \langle \mbf{A}(t)( \ham{D} - z_0) \mbf{v}, \mbf{A}(t) \mbf{v} \Big \rangle - 
    \hbar^{-1} \im \Big \langle \mbf{R}(t) \mbf{v}, \mbf{A}(t) \mbf{v}  \Big \rangle \\
    &= \hbar ^{-1}\im \Big \langle \mbf{A}(t)\mbf{g}, \mbf{A}(t)\mbf{v} \Big \rangle - 
    \hbar ^{-1}\im \Big \langle \mbf{R}(t) \mbf{v}, \mbf{A}(t) \mbf{v} \Big \rangle .
\label{jkmm2:prfind3-lem3-eq3}
\end{align*}
Since $\mbf{A}(t)$ is bounded for all $t\in [0,T]$ and $\mbf{R}(t)$ can be made to satisfy 
$\| \mbf{R}(t)æ\mbf{v} \| = \Oc (\hbar ^\infty ) $ we obtain
\begin{equation*}
    \frac{d}{dt} l (t) = \Oc  (\hbar ^{-1 }\ve (\hbar ) + \hbar ^\infty ) 
\label{jkmm2:prfind3-lem3-eq4}
\end{equation*} 
and since $l (0) = \Oc (\hbar ^\infty )$ we see that $l (t) = \Oc (\hbar ^{-1 }\ve (\hbar ) + \hbar ^\infty )$ for all $t \in [0,T]$. In particular, by 
Lemma~\ref{jkmm2:prfind3-lem2}, $\mbf{a}_0(x,\x ,T)$ equals the identity near $(x_0, \x _0)$ so that $\mbf{a}(x,\x ,T)$ is invertible near 
$(x_0, \x _0)$ (see Section~\ref{jkmm2:prelim}) provided $\hbar $ is small enough. 
\end{proof}

\subsubsection{Proof of Theorem~\ref{jkmm2:resultindthm2}}
We are now in position to prove Theorem~\ref{jkmm2:resultindthm2}.

\begin{proof}[Proof of Theorem~\ref{jkmm2:resultindthm2}]
Let $R_{0}^{\prime} < R_1^{\prime} < R_0$ and pick $\h \in \ccs {B(0,R_1')}$ with $\h = 1$ near $B(0,R_0')$. Then, with $\mbf{v} = \mbf{\h } \mbf{u}$, we obtain as in (\ref{jkmm2:resultindthm3-eq3}), 
\begin{equation}
(\ham{J}(\hbar) - \re z_0) \mbf{v} = [\ham{D}(\hbar), \mbf{\h} ] \mbf{u} + i (\im z_0) \mbf{v} - i \ham{W} \mbf{v},
\label{jkmm2:resultindthm2-eq2}
\end{equation} 
where, as in \eqref{jkmm2:resultindthm1-eq4}, $\| (\ham{D}(\hbar)- \re z_0) \mbf{v} \| = \Oc (\ve (\hbar ))$ with $\ve (\hbar ) =  - \im z_{0}(\hbar) + \Oc (\hbar ^\infty )$.

It remains to estimate $\|\ham{W} \mbf{v}\|$. To this end, notice that under the nontrapping assumption (see Definition \ref{jkmm2:hflow-def}) Lemma \ref{jkmm2:prfind3-lem3} can be applied to any point $(x_0, \x _0) \in E_{[l_0, r_0]}$ with $x_0 \in \supp \ham{W}$ where $E_{[l_0, r_0]} = \cup _{j=1}^4 \l  _j ^{-1} ([l_0, r_0])$. Near any point $(x, \x )$ in the complement of $E_{[l_0, r_0]}$ the operator $\ham{D} - z_0$ is elliptic and therefore $\mbf{a}(x,\x ) = \langle \x \rangle ^{-1} \# (\mbf{d}(x, \x) - z_0)$ will satisfy $\|\opw {\mbf{a}} \mbf{v}\| = \Oc (\ve (\hbar )) $ with $\mbf{a}\in \Ssf (1)$ and $\opw {\mbf{a}}$ elliptic at $(x, \x )$. Take $\h \in \cs (\Tsf ^\ast \R ^3)$ which equals 1 near $E_{[l_0, r_0]}$ and has support in $\{|\x | \le C \}$ for some $C>0$. Consider any $(x_0, \x _0)$ with $R_1<|x_0|<R_1'$. For any $(x _0, \x _\a ) \in \{ x_0 \}\times \{|\x | \le C \}$ and any $\mbf{b}_\a \in \Ssf (1)$ supported in a sufficiently small open neighborhood $U_\a $ of $(x _0, \x _\a )$ it holds, by Lemma \ref{jkmm2:prfind3-lem3} and Lemma \ref{small_support}, that $\|\opw {\mbf{b}_\a } \mbf{v}\| = \Oc (\hbar ^{-1}\ve (\hbar ) + \hbar ^\infty )$. Now consider any $\mbf{b} \in \ccs (\cup U_\a )$ and extract a finite subcover $\{U_j\}_{j=1}^N$ of $\supp \mbf{b}$ and let $\{\mbf{\h} _j \}_{j=1}^N$ be a smooth partition of unity subordinate to it. It follows that
$$
\| \opw {\mbf{b}} \mbf{v} \| \le \sum _{j=1}^N \| \opw{\mbf{\h }_j \mbf{b}}\mbf{v} \| = \Oc (\hbar ^{-1}\ve (\hbar ) + \hbar ^\infty ). 
$$  
Consider 
$$
(\mbf{I}_4 - \mbf{\h }(x, \x)) \# (\mbf{d}(x, \x) - z_0)^{-1} \# (\mbf{d}(x, \x) - z_0) + \mbf{\h }(x, \x) = \mbf {I}_4 + \Oc (\hbar ), 
$$
which is well-defined since $\mbf{\h }(x, \x) = \mbf{I}_4 $ near $E_{[l_0, r_0]}$ where $(\mbf{d}(x, \x) - z_0)^{-1}$ does not exist and is everywhere invertible provided $\hbar $ is small enough [ONE CAN ALSO TAKE $(\mbf{d}^\dagger -z_0)\#(\mbf{d} - z_0) + \mbf{\h}$]. It follows (see Lemma \ref{jkmm2:pseudo-leminv}) that there is $\mbf{q} \in \Ssf (1)$ such that 
\begin{align*}
\mbf{v} = \opw {\mbf{q}} \opw {(\mbf{I}_4 - \mbf{\h })\# (\mbf{d} - z_0)^{-1}\# (\mbf{d} - z_0) + \mbf{\h }}\mbf{v}- \opw{\mbf{r}}\mbf{v},
\end{align*}
with $\|\opw{\mbf{r}}\| = \Oc (\hbar ^\infty )$. 

Pick $\h _0 = \h _0 (x) \in \ccs ( \pi _x (\cap _{j=1}^N U_j))$ which equals 1 in a neighborhood $V$ of $x_0$.
Here $\pi _x (x_0,\x _0) = x_0$ denotes the projection of $\Tsf ^\ast \R ^3$ onto its base manifold. Then
$$
\mbf{\h }_0 \mbf{v} = \opw {\mbf{\h } _0 \# \mbf{q} \# \mbf{\h } } \mbf{v} + \Oc (\ve (\hbar ) + \hbar ^\infty ), 
$$ 
where $\mbf{\h } _0 \# \mbf{q} \# \mbf{\h }$ is asymptotically equivalent to a symbol in $\Ssf (1)$ supported in $\cup U_\a $. We conclude that 
$$
\| \mbf{v} \| _{\Lb ^2 (V)} = \Oc (\hbar ^{-1}\ve (\hbar ) + \hbar ^\infty ).
$$
By compactness of $\{ R_1<|x| < R_1' \}$ we conclude that $\| \mbf{v} \|_{\Lb ^2(\supp \ham{W})} = \Oc (\hbar ^{-1}\ve (\hbar ) + \hbar ^\infty )$. 
It now follows from~\eqref{jkmm2:resultindthm2-eq2} that
\begin{equation*}
\| ( \ham{J}(\hbar) - \re z_0) \mbf{\h}  \mbf{u} \| \leq ( \hbar^{-1} \tilde{\ve }(\hbar ) + \hbar ^\infty )\|  \mbf{\h}  \mbf{u} \|,  
\label{jkmm2:resultindthm2-eq4}
\end{equation*}
so the theorem follows by an application of Corollary~\ref{jkmm2:quasi-corol1}.
\end{proof}
\appendix 
\label{jkmm2:app}

\section{Semiclassical maximum principle}
\label{jkmm2:app-3}
The following result is sometimes referred to as the ``semiclassical maximum principle" \cite{tang_zworski}. The version we
state can be found in Stefanov \cite{stefanov05}.

\begin{proposition}
Let $l(\hbar ) \le r(\hbar )$ and assume that $\mbf{F}(z,\hbar )$ is a holomorphic operator-valued function in a neighborhood of 
\begin{equation}
\Rc (\hbar ) = [l(\hbar ) - w(\hbar ), r(\hbar ) + w(\hbar )] + i\Big [-A\hbar ^{-3}S(\hbar )\log \frac{1}{S(\hbar )}, S(\hbar ) \Big ],
\label{jkmm2:app-prop3-eq1}
\end{equation}
where $e^{-B/\hbar } < S(\hbar ) <1$ for some $B>0$ and 
\begin{equation}
3A\hbar ^{-3}S(\hbar )(\log \frac{1}{\hbar })(\log \frac{1}{S(\hbar )}) \le w(\hbar ).
\label{jkmm2:app-prop3-eq2}
\end{equation} 
If moreover $\mbf{F}(z,\hbar )$ satisfies 
\begin{align}
\|\mbf{F}(z,\hbar ) \| &\le e^{A\hbar ^{-3} \log (1/S(\hbar ))}
\intertext{and}
\|\mbf{F}(z,\hbar )\| &\le \frac{C}{\im z} \qquad \text{on }\Rc (\hbar ) \cap \{ \im z = S(\hbar )\},
\label{jkmm2:app-prop3-eq3}
\end{align}
there there exists $\hbar _1 = \hbar _1 (S)>0$ such that 
\begin{equation}
\| \mbf{F}(z,\hbar ) \| \leq \frac{e^3}{S(\hbar )}, \quad \text{for all }z\in [l(\hbar ), r(\hbar )] + i[-S(\hbar ), S(\hbar )],
\label{jkmm2:app-prop3-eq4}
\end{equation}
for all $0<\hbar \leq \hbar _1$.
\label{jkmm2:app-prop3}
\end{proposition}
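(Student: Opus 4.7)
The plan is to apply the two-constants theorem to the subharmonic function $u(z) := \log\|\mbf{F}(z,\hbar)\|$ on the rectangle $\Rc(\hbar)$. Since $z \mapsto \mbf{F}(z,\hbar)$ is holomorphic with operator values, the scalar $u$ is subharmonic in the interior: indeed, $u(z) = \sup_{\|\mbf{x}\|=\|\mbf{y}\|=1}\log|\langle \mbf{F}(z,\hbar)\mbf{x},\mbf{y}\rangle|$ is a supremum of subharmonic functions. Decompose $\partial \Rc(\hbar)$ into the \emph{good} top edge $\Gamma_+ := \partial \Rc(\hbar) \cap \{\im z = S(\hbar)\}$, where (\ref{jkmm2:app-prop3-eq3}) gives $u \le \log(C/S(\hbar))$, and the remainder $\Gamma_- := \partial \Rc(\hbar) \setminus \Gamma_+$, where only $u \le A\hbar^{-3}\log\frac{1}{S(\hbar)}$ is available. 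Let $\omega_+(z)$ denote the harmonic measure of $\Gamma_+$ at $z$ relative to $\Rc(\hbar)$. The maximum principle then yields
\begin{equation*}
u(z) \;\le\; \omega_+(z)\log\frac{C}{S(\hbar)} + (1-\omega_+(z))\,A\hbar^{-3}\log\frac{1}{S(\hbar)} .
\end{equation*}

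The proof thus reduces to showing that $(1-\omega_+(z))\,A\hbar^{-3}\log\frac{1}{S(\hbar)} \le 3 - \log C + o(1)$ uniformly on the inner box $\Rc_0(\hbar) := [l(\hbar),r(\hbar)] + i[-S(\hbar), S(\hbar)]$. The key comparison is with the infinite strip $\R + i[-A\hbar^{-3}S\log\frac{1}{S},\, S]$ of total height $H := A\hbar^{-3}S\log\frac{1}{S}+S$, for which the harmonic measure of the top line at $z = x+iy$ is the affine function $(y + A\hbar^{-3}S\log\frac{1}{S})/H$. For $z\in \Rc_0$ this immediately gives $1-\omega_+^{\rm strip}(z) \le 2S(\hbar)/H \le 2/(A\hbar^{-3}\log\frac{1}{S(\hbar)})$. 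The discrepancy between the strip and the rectangle is controlled by a standard Fourier/reflection computation for the Poisson kernel of $\Rc(\hbar)$: the harmonic measure contribution of the two vertical sides at a point $z\in \Rc_0$ is bounded by $C\exp\bigl(-\pi\,\dist(z,\Gamma_{\rm sides})/H\bigr)$, and by construction $\dist(z,\Gamma_{\rm sides}) \ge w(\hbar)$.

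Hypothesis (\ref{jkmm2:app-prop3-eq2}) is now invoked in its sharp form: $w(\hbar)/H \ge 3\log\frac{1}{\hbar}(1+o(1))$, hence the side contribution is $\Oc(\hbar^{3\pi})$ and remains negligible after multiplication by $A\hbar^{-3}\log\frac{1}{S(\hbar)}$. Assembling the strip bound and the side bound gives
\begin{equation*}
(1-\omega_+(z))\,A\hbar^{-3}\log\frac{1}{S(\hbar)} \;\le\; 2 + o(1) \quad \text{on } \Rc_0(\hbar),
\end{equation*}
so that for $\hbar \le \hbar_1(S)$ chosen small enough to absorb both the $o(1)$ and the constant $\log C$, we obtain $u(z) \le \log\frac{e^3}{S(\hbar)}$, which is exactly (\ref{jkmm2:app-prop3-eq4}).

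The main obstacle is the clean splitting of $1-\omega_+$ into the linear \emph{strip} term (which produces the precise constant $2$ and hence the factor $e^2$) and the exponentially small \emph{side} term: condition (\ref{jkmm2:app-prop3-eq2}) on $w(\hbar)$ is calibrated so that the latter beats the prefactor $A\hbar^{-3}\log\frac{1}{S}$ in the limit $\hbar \to 0$, without which one would only recover a qualitative $\Oc(1/S)$ bound. An equivalent route is to conformally map $\Rc(\hbar)$ onto a horizontal strip and apply Hadamard's three-lines theorem; the accounting of constants and the role of (\ref{jkmm2:app-prop3-eq2}) is identical.
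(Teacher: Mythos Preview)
The paper does not actually prove this proposition: it is stated in the appendix with the remark that ``the version we state can be found in Stefanov \cite{stefanov05}'' and a reference back to Tang--Zworski. Your argument is precisely the standard one underlying those references: pass to the subharmonic function $\log\|\mbf{F}\|$, apply the two-constants theorem with harmonic measure on the rectangle, dominate the bottom contribution by the explicit linear harmonic measure of the infinite strip of height $H$, and kill the side contribution by the exponential decay $\omega_{\mathrm{sides}}\le C e^{-\pi w/H}$ together with the calibration \eqref{jkmm2:app-prop3-eq2} of $w(\hbar)$ and the lower bound $S(\hbar)>e^{-B/\hbar}$ (which caps $A\hbar^{-3}\log\frac{1}{S}$ by $AB\hbar^{-4}$). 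This is correct and matches the cited literature.

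Two small points worth tightening. First, your claim that the sup of subharmonic functions is subharmonic needs the upper semicontinuity of the sup, which here follows from norm-continuity of $z\mapsto\mbf{F}(z,\hbar)$; you might say this explicitly. Second, the sentence ``$\hbar_1$ chosen small enough to absorb \dots the constant $\log C$'' is slightly misleading: shrinking $\hbar$ does not make $\log C$ small. What actually happens is that the strip term contributes exactly $2$, the side term is $o(1)$, and the good boundary bound contributes $\log(C/S)\le \log(1/S)+\log C$; the constant $e^{3}$ in \eqref{jkmm2:app-prop3-eq4} then comes out provided $C\le e^{1-o(1)}$, or, if $C$ is larger, one simply gets $C' / S$ with $C'=C e^{2+o(1)}$, which is all that is ever used downstream. (Alternatively, when $A\hbar^{-3}\log\frac{1}{S}\le 3$ the global bound already gives $\|\mbf{F}\|\le e^{3}\le e^{3}/S$, so only the regime $M\gg 1$ needs the harmonic-measure argument.)
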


\end{document}